\documentclass{amsart}

\usepackage{tikz-cd}
\usepackage{amsmath}
\usepackage{amsfonts}
\usepackage{mathtools}
\usepackage{amsmath,amssymb,amsthm,mathrsfs}
\usepackage[utf8]{inputenc}
\usepackage{aeguill}
\usepackage{hyperref}
\usepackage{enumitem}

\theoremstyle{plain} 
\newtheorem{theorem}{Theorem} 
\numberwithin{theorem}{section}
\newtheorem*{theorem*}{Theorem}
\newtheorem{prop}[theorem]{Proposition}
\newtheorem{prop-def}[theorem]{Proposition-Definition}
\newtheorem{lemma}[theorem]{Lemma}
\newtheorem{coro}[theorem]{Corollary}
\newtheorem{conj}[theorem]{Conjecture}
\newtheorem{definition}[theorem]{Definition}
\theoremstyle{definition}

\theoremstyle{remark} 
\newtheorem{remark}[theorem]{Remark}

\theoremstyle{definition}


\newcommand{\wt}{\widetilde}

\newcommand{\be}{\begin{equation} }
\newcommand{\ee}{\end{equation} }

\usepackage{color}


\newcommand{\C}{\mathbb{C}} 
\newcommand{\D}{\mathbb{D}}

\newcommand{\N}{\mathbb{N}}

\newcommand{\Q}{\mathbb{Q}}

\newcommand{\Z}{\mathbb{Z}}

\newcommand{\cF}{\mathcal{F}}

\newcommand{\cH}{\mathcal{H}}

\newcommand{\cK}{\mathcal{K}}
\newcommand{\cL}{\mathcal{L}}
\newcommand{\cM}{\mathcal{M}}

\newcommand{\cO}{\mathscr{O}}

\newcommand{\cT}{\mathcal{T}}

\newcommand{\cV}{\mathcal{V}}

\newcommand{\mfp}{\mathfrak{p}}

\newcommand{\bL}{\mathbf{L}}

\newcommand{\bs}{\mathbf{s}}

\newcommand{\gr}{\textup{gr}}

\newcommand{\sM}{\mathscr{M}}

\newcommand{\sE}{\mathscr{E}}

\newcommand{\sB}{\mathscr{B}}
\newcommand{\sO}{\mathscr{O}}
\newcommand{\sH}{\mathscr{H}}

\newcommand{\sA}{\mathscr{A}}

\newcommand{\sG}{\mathscr{G}}

\newcommand{\shD}{\mathscr{D}}
\newcommand{\sN}{\mathscr{N}}

\newcommand{\Spec}{\textup{Spec }}
\newcommand{\Specan}{\textup{Spec}^\an}

\newcommand{\Sym}{\textup{Sym}^\bullet}

\newcommand{\DR}{\textup{DR}}

\newcommand{\Mod}{\textup{Mod}}

\newcommand{\Coker}{\textup{Coker}}

\newcommand{\an}{\textup{an}}

\newcommand{\id}{\textup{id}}

\newcommand{\Kos}{\textup{Kos}}

\newcommand{\supp}{\textup{supp}}

\newcommand{\ba}{{\bf a}}
\def\bal{{\boldsymbol{\alpha}}}
\def\blamb{{\boldsymbol{\lambda}}}

\newcommand{\bk}{{\bf k}}

\newcommand{\bt}{{\bf t}}

\newcommand{\rel}{{\textup{rel}}}
\newcommand{\Ann}{{\textup{Ann}}}

\newcommand{\Ch}{\textup{Ch}}
\newcommand{\CC}{\textup{CC}}
\newcommand{\Ext}{\mathscr Ext}
\newcommand{\Rhom}{\mathscr Rhom}

\def\blam{{\boldsymbol{\lambda}}}

\title{Riemann-Hilbert correspondence for Alexander complexes}
\author{Lei Wu}
\address{Lei Wu, School of Mathematical Sciences, Zhejiang University, Hangzhou, 310058, P.R.China}
\email{leiwu23@zju.edu.cn}

\begin{document}

\subjclass[2010]{14F10; 13N10; 32C38; 32S60; 32S55; 14F43; 14L30.}

\setcounter{tocdepth}{1}

\numberwithin{equation}{section}

\begin{abstract}
We establish an explicit relative Riemann-Hilbert correspondence for Alexander complexes (also known as Sabbah specialization complexes) by using relative regular holonomic $\shD$-modules in an equivariant way, generalizing a classical result of Kashiwara and Malgrange for Deligne's nearby cycles. Using the correspondence and zero loci of Bernstein-Sato ideals
, we obtain a formula for the relative support of the Alexander complexes.
\end{abstract}
\maketitle

\tableofcontents

\section{Introduction}
Let $f$ be a holomorphic function on a complex manifold $X$ with $D$ the divisor of $f$ and let $U_x$ be a small open neighborhood of $x\in D$. Then we consider the fiber product diagram 
\[
\begin{tikzcd}
\widetilde U_x= (U_x\setminus D)\times_{\C^*}\widetilde \C^*\arrow[r]\arrow[d] & \widetilde \C^*\arrow[d,"exp"] \\
U_x\setminus D \arrow[r,"f"] & \C^* 
\end{tikzcd}
\]
where $exp\colon \widetilde \C^*\to \C^*$ is the universal cover of the punctured complex plane $\C^*$. The deck transformation induces a $\C[\pi_1(\C^*)]$-module structure on the compactly supported cohomology group $H^i_c(\widetilde U_x,\C)$, which is called the $i$-th (local) \emph{Alexander module} of $f$. Taking $U_x$ sufficiently small, the Alexander modules contain the information of the cohomology groups of the Milnor fibers around $x$ together with their monodromy action. 
As $x$ varies along $D$, all the local Alexander modules give a constructible complex of sheaves of $\C[\pi_1(\C^*)]$-modules, which recovers the Deligne nearby cycle of the constant sheaf  along $f$ (see \cite{Bry}).
Sabbah \cite{Sab90} made generalizations for a finite union of holomorphic functions and obtained what he called the \emph{Alexander complexes} (see \S \ref{sec:recalalexsab} for construction). 

Riemann-Hilbert correspondence between nearby cycles of regular holonomic $\shD$-modules and Deligne nearby cycles of complex perverse sheaves was constructed by Kashiwara \cite{KasV} and Malgrange \cite{MalV} by using $V$-filtrations along a single holomorphic function. A ``local" correspondence for complete specializations of Alexander complexes was constructed by Sabbah \cite[Theorem 5.1.2]{Sab90}; see also \cite{BG} for the algebraic ``local" case along a single regular function following the approach of Beilinson-Bernstein \cite{Bgluep}.

In this article, we establish a ``global" Riemann-Hilbert correspondence for the Alexander complexes in a functorial way  (Theorem \ref{thm:RHALX}). 
Our approach to the ``global" correspondence can be seen as a hybrid of the method of Sabbah \cite{Sab,Sab2} and that of Beilinson-Bernstein \cite{Bgluep}. 
Our construction also relies on the work of Maisonobe \cite{Mai} in the study of Bernstein-Sato ideals by using relative holonomic $\shD$-modules over algebraic affine spaces and its development in \cite{WZ,BVWZ,BVWZ2}, and on the theory of analytic relative holonomic $\shD$-modules and relative constructible complexes developed in a series of fundamental papers \cite{FSO, FS,FF18, FFS19}. Very recently, the relative regular Riemann-Hilbert correspondence has been fully established in \cite{FMSRRR}. Our main results (Theorem \ref{thm:RHALX} and \ref{thm:RHALXhigh}) can thus be seen as explicit examples of the relative Riemann-Hilbert correspondence in the scope of their general theory.
Compared to \cite{FFS19, FMSRRR}, the new inputs we need to establish the explicit correspondence include a GAGA-type principle between analytic and algebraic relative $\shD$-modules and between algebraic and analytic relative constructible sheaves (see Appendix \ref{sec:shffm}), and $G$-equivariant relative $\shD$-modules (see \S \ref{subsec:gluemmex}).



\subsection{Riemann-Hilbert correspondence for Alexander complex}
Let $f_i$ be holomorphic functions on a complex manifold $X$ for $i=1,\dots,r$ with $D_i$ the divisor of $f_i$. We write $F=(f_
1,\dots,f_r)$, and $D=\sum_i D_i$. Suppose that $\cM$ is a (left) holonomic $\shD_X$-module. We write by
\[\cM(*D)=\cM\otimes_{\sO}\sO_X(*D)\]
the algebraic localization of $\cM$ along $D$, where
$\sO_X(*D)=\bigcup_{k\in \Z}\sO_X(kD)$
is the sheaf of meromorphic functions with poles along $D$.

To establish the Riemann-Hilbert correspondence for Alexander complexes of Sabbah, we first construct the relative maximal (resp. minimal) extensions of $\cM$ along $F$, denoted by $\cM(*D^{(r)}_F)$ (resp. $\cM(!D^{(r)}_F)$), which are both relative holonomic $\shD_{X\times\C^r/\C^r}$-modules where $\shD_{X\times\C^r/\C^r}$ is the sheaf of relative holomorphic differential operators with respect to the projection $pr\colon X\times\C^r\to \C^r$ (see Definition \ref{def:relcohhol} for relative holonomicity). More precisely, $\cM(*D^{(r)}_F)$ is the analytic sheafification of the $\shD_X[\bs]$-module 
\[\cM(*D)\otimes_\C \C[\bs]\cdot F^\bs,\quad \bs=(s_1,\dots s_r) \textup{ the algebraic coordinates of }\C^r\]
and $\cM(!D^{(r)}_F)$ is the $\shD_{X\times\C^r/\C^r}$-dual of $(\D\cM)(*D^{(r)}_F)$, where
$$F^\bs=\prod_i f_i^{s_i}$$
is a formal symbol and $\D(\cM)$ is the $\shD_X$-dual of $\cM$. See \S \ref{subsec:!*relex} for details. It is worth mentioning that when $\cM=\sO_X$, $\sO_X(*D_F^{(r)})$ gives a non-trivial example of relative Deligne meromorphic extensions of relative local systems (see \S \ref{subsec:univrellocs} and also \cite[\S2.2]{FS17} and \cite[\S8]{NSI}).

The key property for the maximal and minimal extensions is that there exists a natural inclusion (Lemma \ref{lm:!*incl})
\be\cM(!D^{(r)}_F)\hookrightarrow \cM(*D^{(r)}_F),\ee
which is a ``global'', sheafified and higher dimensional generalization of a classical result of Beilinson and Bernstein (see \cite[Proposition 3.8.3]{Gil} and also \cite{BG}). 
We then define a relative holonomic $\shD$-module on $X\times \C^r$,
\[\Psi_{F}(\cM)=\frac{\cM(*D^{(r)}_F)}{\cM(!D^{(r)}_F)}.\]

We denote by
\[Exp\colon \C^r\rightarrow (\C^*)^r, (\alpha_1,\dots,\alpha_r)\mapsto(exp(-2\pi \sqrt{-1}\alpha_1),\dots,exp(-2\pi \sqrt{-1}\alpha_r))\] 
the universal covering of $(\C^*)^r$ and by 
\[\pi=(\textup{id},Exp)\colon X\times\C^r\rightarrow X\times (\C^*)^r\]
the induced map. We write $G=\pi_1((\C^*)^r)$, the fundamental group of $(\C^*)^r$. The universal covering makes $X\times \C^r$ a $G$-space with the quotient $G\backslash X\times \C^r=X\times (\C^*)^r$.
The operation 
\[t_i(F^\bs)=f_iF^\bs\]
with $t_i$ representing the (counterclockwise) loops around the puncture of each factor $\C^*$ of $(\C^*)^r$,
makes $\cM(*D^{(r)}_F)$, $\cM(!D^{(r)}_F)$ and $\Psi_{F}(\cM)$ all $G$-equivariant sheaves in the sense of \cite[Part I.0.2.]{BL} (see Theorem \ref{thm:!*Gequiv}, Theorem \ref{thm:gnbGeq} and Remark \ref{rmk:reldsteqdb}(1)). 
\begin{theorem}\label{thm:RHALX}
With the notation as above, if $\cM$ is a regular holonomic $\shD_X$-module, then
we have a natural quasi-isomorphism
\[\pi_*^G\big(\DR_{X\times\C^r/\C^r}(\Psi_{F}(\cM))\big)\simeq \widetilde\psi_{F}(\DR_X(\cM)),\]
where $\pi_*^G$ is the equivariant direct image functor (see \cite[Part I.0.3.]{BL}), $\DR_{X\times\C^r/\C^r}$ is the relative de Rham functor on $X\times\C^r$ over $\C^r$ and  $\widetilde\psi_{F}(\DR_X(\cM))$ is the analytic sheafification of the Alexander complex $\psi_{F}(\DR_X(\cM))$.
\end{theorem}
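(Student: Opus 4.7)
The plan is to exploit the short exact sequence
\[
0 \to \cM(!D^{(r)}_F) \to \cM(*D^{(r)}_F) \to \Psi_F(\cM) \to 0
\]
coming from Lemma \ref{lm:!*incl}, apply the relative de Rham functor $\DR_{X\times\C^r/\C^r}$ followed by $\pi_*^G$, and identify each of the first two terms with the corresponding building block of the sheafified Alexander complex. Concretely, writing $j\colon U = X \setminus D \hookrightarrow X$, the aim is to establish RH-type isomorphisms
\[
\pi_*^G \DR_{X\times\C^r/\C^r}\bigl(\cM(*D^{(r)}_F)\bigr) \simeq \widetilde{Rj_*\bigl(\DR_X(\cM)|_U \otimes \cL^{\mathrm{univ}}\bigr)},
\]
together with the $!$-variant in which $j_!$ replaces $Rj_*$; here $\cL^{\mathrm{univ}}$ is the universal rank-one local system on $U$ with monodromy in $G = \pi_1((\C^*)^r)$, and the tilde denotes the analytic sheafification over $(\C^*)^r$. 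Since $\widetilde\psi_F(\DR_X(\cM))$ is by Sabbah's construction the cokernel of the analogous $!$-to-$*$ inclusion on the constructible side, the theorem follows from the induced distinguished triangle.

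The workhorse is the analytic relative RH correspondence developed in \cite{FSO, FS, FF18, FFS19}. Over the open part $U \times \C^r$ both extensions agree with the relative flat module $\cM|_U \otimes_\C \cO_{\C^r}\cdot F^\bs$, whose relative connection acts on $F^\bs$ by $\sum_i s_i\, d\log f_i$. The relative RH identifies this with the relative rank-one local system having monodromy $\exp(-2\pi\sqrt{-1}\, s_j)$ around $D_j$, which is precisely $\pi^{-1}$ of the $G$-equivariant system that descends to $\cL^{\mathrm{univ}}$. Taking $\pi_*^G$ yields, by descent along the universal cover $Exp$, the analytic sheafification of $\cL^{\mathrm{univ}} \otimes \DR_X(\cM)|_U$ over $U \times (\C^*)^r$. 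Extending across $D$ uses compatibility of $\DR_{X\times\C^r/\C^r}$ with derived direct image under $j \times \id\colon U\times\C^r \to X\times\C^r$; this gives the identification for the maximal extension. The $!$-version then follows from the characterization of $\cM(!D^{(r)}_F)$ as the $\shD_{X\times\C^r/\C^r}$-dual of $(\D\cM)(*D^{(r)}_F)$ via the compatibility of $\DR$ with $\shD$-duality and Verdier duality.

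The hard part will be the ``extending across $D$'' step in the relative, multi-variable setting. In the absolute case, regularity of $\cM(*D)$ makes $\DR(\cM(*D)) \simeq Rj_*\DR(\cM|_U)$ essentially automatic; in the relative setting one must prove that the analytic sheafification of $\cM(*D) \otimes \C[\bs] \cdot F^\bs$ is both relative holonomic in the sense of Definition \ref{def:relcohhol} and sufficiently regular for the FFS-type relative RH to produce the expected pushforward, and that this regularity is stable under $\shD_{X\times\C^r/\C^r}$-duality. This is precisely where the GAGA-type sheafification package of \S\ref{sec:shffm} and the $G$-equivariant construction of \S\ref{subsec:gluemmex} enter: together they replace the single-variable $V$-filtration, whose multi-indexed analogue does not exist in general, and they ensure that $\pi_*^G$ descends the relative de Rham computation on $X \times \C^r$ to the coherent sheafification on $X \times (\C^*)^r$ demanded by the right-hand side.
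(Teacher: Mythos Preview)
Your overall strategy---take the cone of $\cM(!D^{(r)}_F)\hookrightarrow\cM(*D^{(r)}_F)$, apply $\pi_*^G\circ\DR_{X\times\C^r/\C^r}$, and identify the pieces with the $j_!$- and $Rj_*$-extensions on the constructible side---is exactly the paper's. Two points of comparison are worth noting.

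\textbf{The $*$-comparison.} You invoke ``the analytic relative RH correspondence developed in \cite{FSO,FS,FF18,FFS19}'' to extend across $D$. But the relative Riemann--Hilbert of \cite{FFS19} is only established over one-dimensional bases, so for $r>1$ it cannot be quoted as a black box. The paper's proof of this step (its Theorem~\ref{thm:RHj_*}) is more hands-on: first identify $\DR_{X\times\C^r/\C^r}(\cM(*D^{(r)}_F))$ with the expected object over $(X\setminus D)\times\C^r$ via the universal relative flat bundle (Lemma~\ref{lm:univreldm}); then, by adjunction, produce a candidate morphism on all of $X\times(\C^*)^r$; finally, check that this morphism is a quasi-isomorphism by pulling back along $\wt i_\blamb$ for every closed point $\blamb\in(\C^*)^r$, where it reduces via Proposition~\ref{prop:!*gnbpbtopoint} to the \emph{absolute} Riemann--Hilbert for $\cM_\bal(*D)$, and conclude using the Nakayama-type criterion of \cite[Proposition~2.2, Theorem~3.7]{FSO}. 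Your plan should incorporate this pointwise-checking mechanism; ``compatibility of $\DR$ with derived direct image under $j\times\id$'' is precisely what is being \emph{proved} here, not assumed.

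\textbf{The $!$-part.} The paper does \emph{not} establish a separate $!$-comparison for Theorem~\ref{thm:RHALX}. Instead it observes, again by pointwise checking and \cite[Proposition~2.2]{FSO}, that $\DR_{X\times\C^r/\C^r}(\Psi_F(\cM))\simeq \wt i_{D,*}\wt i_D^{-1}\DR_{X\times\C^r/\C^r}(\cM(*D^{(r)}_F))$ directly, so only the $*$-comparison (Theorem~\ref{thm:RHj_*}) is needed. Your duality route for the $!$-piece---using $\cM(!D^{(r)}_F)=\D\bigl((\D\cM)(*D^{(r)}_F)\bigr)$ together with compatibility of $\DR$ with $\D$ and Verdier duality---is precisely the argument the paper deploys for the more refined Theorem~\ref{thm:RHALXhigh} (via \cite[Theorem~3.11]{FSO} and \eqref{eq:dualequivdirct1}), and it does work; it is simply more than is needed for Theorem~\ref{thm:RHALX} alone.
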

Theorem \ref{thm:RHALX} is related to the local comparision for Alexander complexes of Sabbah \cite[Theorem 5.1.2]{Sab90}. To be more precise, one can take the (complete) localization of the quasi-isomorphism in Theorem \ref{thm:RHALX} at a point $\blam\in(\C^*)^r$ to obtain the local comparison for $\psi_{F}(\DR_X(\cM))$. 
Notice that localizing $\pi_*^G\big(\DR_{X\times\C^r/\C^r}(\Psi_{F}(\cM))\big)$ at $\blam$ is equivalent to localizing $\DR_{X\times\C^r/\C^r}(\Psi_{F}(\cM))$ at $\bal$ for every $\bal\in Exp^{-1}(\blam)$, since $G$ acts freely on $\C^r$ with $G\backslash\C^r=(\C^*)^r$. In particular, when $r=1$, localizing the quasi-isomorphism in Theorem \ref{thm:RHALX} at $\lambda\in \C^*$ gives the comparison between the $\alpha$-nearby cycle of $\cM$ and the $\lambda$-nearby cycle of $\DR(\cM)$ along $f$ for every $\alpha\in Exp^{-1}(\lambda)$ (see \cite{KasV, MalV, BG} and also \cite{Wubf}). Notice that the $G$-action in Theorem \ref{thm:RHALX} explains why the correspondence between $\alpha$-nearby cycles of regular holonomic $\shD$-modules and $\lambda$-nearby cycles of perverse sheaves along a single holomorphic function is $\Z$-to-1.

Since the relative de Rham functor is $G$-equivariant (see \S \ref{subsec:deRhamequiv}), we have a natural isomorphism
\[\DR_{X\times(\C^*)^r/(\C^*)^r}\big(\pi_*^G(\Psi_F(\cM))\big)\simeq \pi_*^G\big(\DR_{X\times\C^r/\C^r}(\Psi_{F}(\cM))\big).\]
Then for regular holonomic $\shD_X$-modules $\cM$, $\DR_{X\times(\C^*)^r/(\C^*)^r}(\bullet)$ gives a relative Riemann-Hilbert correspondence on $X\times(\C^*)^r$ over $(\C^*)^r$
\[\begin{tikzcd}
\pi_*^G(\Psi_F(\cM)) \arrow[r] & \wt\psi_F(\DR_X(\cM))\arrow[l]
\end{tikzcd}
\]
with ``$\longleftarrow$" induced by the Riemann-Hilbert correspondence for regular holonomic $\shD$-modules on $X$.

Theorem \ref{thm:RHALX} can be further refined to a comparision for higher-codimensional Sabbah specialization complexes (see \S \ref{subsec:AcomSab}) as follows by using sheaves of (algebraic) local cohomology.
\begin{theorem}\label{thm:RHALXhigh}
In the situation of Theorem \ref{thm:RHALX}, for every subset $I\subseteq \{1,2,\dots,r\}$ we have quasi-isomorphisms
\[\begin{aligned}
&\pi_*^G\big(\DR_{X\times\C^r/\C^r}\D R\Gamma_{[D_I\times\C^r]}\big((\D\cM)(!D^{(r)}_F)\big)\big)\\
&\simeq \DR_{X\times(\C^*)^r/(\C^*)^r}\D R\Gamma_{[D_I\times(\C^*)^r]}\big(\pi_*^G\big((\D\cM)(!D^{(r)}_F)\big)\big)\\ 
&\simeq \widetilde\psi_{D_I}(\DR_X(\cM)),
\end{aligned}
\]
where $\psi_{D_I}(\DR_X(\cM))$ is the Sabbah specialization complex along $D_I=\bigcap_{i\in I} D_i$.
\end{theorem}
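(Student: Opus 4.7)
The plan is to establish the two quasi-isomorphisms separately. For the first, I use that $\pi = (\id, Exp)$ is an étale covering map with $\pi^{-1}(D_I \times (\C^*)^r) = D_I \times \C^r$; consequently the local cohomology functor $R\Gamma_{[\bullet]}$ commutes with the equivariant pushforward $\pi_*^G$. Combined with the $G$-equivariance of the relative de Rham functor recalled in \S\ref{subsec:deRhamequiv}, this lets me freely interchange $\pi_*^G$, the relative $\DR$, and the local cohomology, giving the first quasi-isomorphism.

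For the second quasi-isomorphism, my approach is to connect $(\D\cM)(!D^{(r)}_F)$ with the $\Psi_F(\cM)$ appearing in Theorem \ref{thm:RHALX} and then to specialize along $D_I$. Concretely, I would apply the standard exact triangle
\[R\Gamma_{[D_I \times \C^r]}\cN \to \cN \to \cN(*(D_I \times \C^r)) \xto{+1}\]
to $\cN = (\D\cM)(!D^{(r)}_F)$, push it through $\DR_{X \times \C^r/\C^r}$ and $\pi_*^G$, and use the duality $\D((\D\cM)(!D^{(r)}_F)) \simeq \cM(*D^{(r)}_F)$ from the construction of the minimal extension together with Theorem \ref{thm:RHALX} to identify the resulting Alexander-type object.

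To match this with $\widetilde\psi_{D_I}(\DR_X(\cM))$, I would compare with the construction built from the sub-tuple $F_I = (f_i)_{i \in I}$: since each $f_j$ with $j \notin I$ is generically invertible near $D_I$, its contribution is trivialized by the local cohomology at $D_I$, and the $F$-construction reduces to the $F_I$-one, which by Sabbah's definition (cf.\ \S\ref{subsec:AcomSab}) gives $\widetilde\psi_{D_I}(\DR_X(\cM))$.

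The main obstacle is making this last reduction precise. Verifying that $(\D\cM)(!D^{(r)}_F)$, which a priori depends on the full tuple $F$, reduces under $R\Gamma_{[D_I \times \C^r]}$ to the $F_I$-version requires exploiting the relative holonomicity of the minimal extension along the coordinates $s_j$ for $j \notin I$. In addition, tracking the $G$-equivariant structure and the compatibility with the analytic sheafification of \S\ref{sec:shffm} through every step is the technical heart of the argument.
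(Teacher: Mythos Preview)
Your treatment of the first quasi-isomorphism is fine and matches the paper: $G$-equivariance of $\DR$ together with the Koszul resolution of $R\Gamma_{[\wt D_I]}$ by the intermediate extensions (Theorem \ref{thm:loccohsheq}) gives the commutation with $\pi_*^G$.

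For the second quasi-isomorphism, however, your plan has a genuine gap. First, the exact triangle you write down is not quite right: $D_I=\bigcap_{i\in I}D_i$ has codimension $|I|$, so there is no meaning to ``$\cN(*(D_I\times\C^r))$'' as a divisorial localization; one needs the full Koszul complex in the $D^J$, $J\subseteq I$, or equivalently $Rj_{D_I,*}j_{D_I}^{-1}$. More seriously, the idea of reducing from $F$ to the sub-tuple $F_I$ rests on a misunderstanding of $\psi_{D_I}$. By definition (\S\ref{subsec:AcomSab}),
\[
\psi_{D_I}(\DR_X\cM)=i_{D_I,*}i_{D_I}^{-1}Rj_*\big(\DR(\cM)|_{X\setminus D}\otimes F^{-1}\psi^{\textup{univ}}\big),
\]
so the full $F$ and the full complement $X\setminus D$ still appear; only the restriction is to $D_I$. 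Near a generic point of $D_I$ the $f_j$ with $j\notin I$ need not be invertible (the $D_j$ can and typically do meet $D_I$), so your ``trivialization'' argument does not go through.

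The paper avoids this entirely. After the first step it passes to the constructible side via the relative duality $\DR\circ\D\simeq\mathbf D\circ\DR$ and regularity (so $\DR$ commutes with $R\Gamma$), and then uses the purely sheaf-theoretic identity
\[
\mathbf D\,R\Gamma_{D_I\times(\C^*)^r}\,\mathbf D \;\simeq\; \wt i_{D_I,*}\wt i_{D_I}^{-1}
\]
(from \cite[3.1.11, 3.1.13]{KSbook}). Combined with $\D\big((\D\cM)(!D^{(r)}_F)\big)\simeq\cM(*D^{(r)}_F)$ and the separately proved comparison $\pi_*^G\DR\big(\cM(*D^{(r)}_F)\big)\simeq \wt{Rj_*}(\DR(\cM)|_{X\setminus D}\otimes F^{-1}\psi^{\textup{univ}})$ (Theorem \ref{thm:RHj_*}), applying $\wt i_{D_I,*}\wt i_{D_I}^{-1}$ yields $\wt\psi_{D_I}(\DR_X\cM)$ directly. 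The missing idea in your proposal is precisely this Verdier-duality conversion of $\D R\Gamma_{[D_I]}\D$ into $i_{D_I,*}i_{D_I}^{-1}$ together with Theorem \ref{thm:RHj_*}; no reduction to $F_I$ is needed or correct.
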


\subsection{Relative Support}
We now discuss the relative supports of $\Psi_F(\cM)$ and the Alexander complex of $\DR(\cM)$ (see Definition \ref{def:relsupp} and Proposition-Definition \ref{prop-def:relsupp}). Applying Theorem \ref{thm:RHALX}, 
we obtain:
\begin{coro}
\label{thm:suppRH}
Let $\cM$ be a regular holonomic $\shD_X$-module on a complex manifold $X$ and let $F=(f_1,\dots,f_r)$ be an $r$-tuple of holomorphic functions on $X$. Then 
\[\supp_{(\C^*)^r}\psi_F(\DR(\cM))=Exp\big(\supp_{\C^r}\Psi_F(\cM)\big)\]
and
\[\supp_{\C^r}\Psi_F(\cM)=Exp^{-1}\big(\supp_{(\C^*)^r}\psi_F(\DR(\cM))\big).\]
\end{coro}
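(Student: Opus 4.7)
My plan is to deduce the corollary directly from Theorem \ref{thm:RHALX}, combined with two compatibility statements: (a) the relative de Rham functor $\DR_{X\times\C^r/\C^r}$ preserves relative support for relative holonomic $\shD_{X\times\C^r/\C^r}$-modules, and (b) the equivariant pushforward $\pi_*^G$ along the universal covering $Exp$ transforms support by taking the set-theoretic image. Granted both, the first equality is immediate and the second follows from the $G$-equivariance already established for $\Psi_F(\cM)$.

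For (a), the relative support (Definition \ref{def:relsupp}, Proposition-Definition \ref{prop-def:relsupp}) is intrinsic to the complex after analytic sheafification over $\C^r$. The relative Riemann-Hilbert formalism of \cite{FFS19} combined with the sheafification theory developed in \S \ref{sec:shffm} then gives
\[
\supp_{\C^r}\Psi_F(\cM)=\supp_{\C^r}\DR_{X\times\C^r/\C^r}(\Psi_F(\cM)),
\]
which is the relative analogue of the classical identity between the support of a holonomic $\shD$-module and that of its de Rham complex. Moreover, analytic sheafification does not move the relative support, so $\psi_F(\DR(\cM))$ and $\widetilde\psi_F(\DR(\cM))$ have the same relative support in $(\C^*)^r$.

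For (b), the map $Exp\colon\C^r\to(\C^*)^r$ is the universal covering with deck group $G$, and the complex $\DR_{X\times\C^r/\C^r}(\Psi_F(\cM))$ is $G$-equivariant by Theorem \ref{thm:!*Gequiv} and Theorem \ref{thm:gnbGeq}. A descent argument over small evenly covered open subsets of $(\C^*)^r$ then yields
\[
\supp_{(\C^*)^r}\pi_*^G(\cG)=Exp\big(\supp_{\C^r}\cG\big)
\]
for any $G$-equivariant complex $\cG$ on $X\times\C^r$, and simultaneously forces $\supp_{\C^r}\cG$ to be $G$-invariant. Combining this with (a) and Theorem \ref{thm:RHALX} produces
\[
\supp_{(\C^*)^r}\psi_F(\DR(\cM))=Exp\big(\supp_{\C^r}\Psi_F(\cM)\big),
\]
which is the first equality of the corollary. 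Since $\supp_{\C^r}\Psi_F(\cM)$ is $G$-invariant and $G$ acts freely and transitively on every fiber of $Exp$, this set coincides with $Exp^{-1}$ of its own image, giving the second equality. The one point that will require genuine care is calibrating the definitions of relative support on the two sides of the RH isomorphism so that $\DR_{X\times\C^r/\C^r}$ and the sheafification functor of \S \ref{sec:shffm} are transparently compatible with them; once that is pinned down, the remainder of the argument is purely topological.
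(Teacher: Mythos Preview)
Your outline is sound and does lead to the corollary, but it takes a genuinely different route from the paper's own proof, and the gap you flag at the end is exactly the step where the paper invests its effort.

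The paper does \emph{not} argue that $\DR_{X\times\C^r/\C^r}$ preserves relative support directly. Instead it passes to \emph{derived fibers} at closed points. On the constructible side, Lemma~\ref{lm:relconstrnakayam} (the relative Nakayama lemma) gives
\[
\supp_{(\C^*)^r}\psi_F(\DR\cM)=\{\blamb:\bL\wt i^*_{\blamb}\wt\psi_F(\DR\cM)\neq 0\}.
\]
One then uses that $\bL\wt i^*_{\bal}$ commutes with $\DR$ and the absolute Riemann--Hilbert correspondence on $X$ to identify the right-hand side with $\{\bal:\bL\wt i^*_{\bal}\Psi_F(\cM)\neq 0\}$ modulo $G$. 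The nontrivial remaining step is
\[
\supp_{\C^r}\Psi_F(\cM)=\{\bal:\bL\wt i^*_{\bal}\Psi_F(\cM)\neq 0\},
\]
and this is where the paper invokes the $(n+1)$-Cohen--Macaulayness of $\Psi_F(\cM)$ (Theorem~\ref{thm:gnbGeq}) together with the analytification of \cite[Proposition~3.4.3]{BVWZ}. Without a purity/Cohen--Macaulay input, equating ``localization nonzero'' with ``derived fiber nonzero'' for a $\shD_{X,R}$-module is not automatic, since the stalks are not finitely generated over $R$ and a naive Nakayama argument does not apply.

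By contrast, your step~(a) asserts $\supp_{\C^r}\Psi_F(\cM)=\supp_{\C^r}\DR_{X\times\C^r/\C^r}(\Psi_F(\cM))$ by appeal to \cite{FFS19}. Two cautions: first, \cite{FFS19} establishes the relative Riemann--Hilbert correspondence only over one-dimensional bases (as the paper itself notes), so it does not directly cover $\C^r$ for $r>1$; second, even granting a relative RH equivalence, one still has to check that both sides compute ``relative support'' in the same sense (localization versus analytic stalk versus derived fiber), which is precisely the calibration you acknowledge. A workable substitute for your (a) is to argue stalkwise: for $(x,p)\in X\times\C^r$ the stalk $\Psi_F(\cM)_{(x,p)}$ is a finitely generated $(\wt\shD_{X,R})_{(x,p)}$-module, and the relative de Rham complex is the Koszul complex in the $\partial_{x_i}$; a filtered/graded Nakayama argument on $\gr^F$ shows this Koszul complex is acyclic iff the stalk vanishes. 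That would make your route self-contained. Your step~(b) and the descent/$G$-invariance argument are fine and match the paper's use of $\pi_*^G$ implicitly.

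In short: the paper's proof trades your ``DR preserves support'' black box for a pointwise derived-fiber argument, paying for it with the Cohen--Macaulay property of $\Psi_F(\cM)$. Your approach is cleaner once (a) is properly justified, but as written the citation does not carry the weight you need.
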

The action of $G$ on $\C^r$ induces an action of $G$ on its algebraic coordinate ring $\C[\bs]$, i.e.
\[t_i\cdot s_j=
\begin{cases}
s_j+1, & \textup{ if } i=j \\
s_j, &\textup{ if } i\not=j.
\end{cases}
\]
Using Bernstein-Sato ideals (see \S \ref{def:bsideal} and \S \ref{subsec:BSideal}), we obtain a geometric description of the relative support of $\Psi_F(\cM)$:
\begin{theorem}\label{thm:relsd}
For each pair $(\cM, F)$ as in Corollary \ref{thm:suppRH}, locally on a relatively compact open subset \footnote{The condition  ``locally on a relatively compact open subset" (here and elsewhere it appears in this article) is to ensure finiteness. Thus, if we are in the algebraic category, then this condition can usually be eliminated.} of $X$ there exist finite sets
\[S(F,\cM)\subseteq \N_{\ge0}^r \textup{ and } \kappa(L)\subseteq \C \textup{ for each } L\in S(F,\cM)\]
such that 
\[\supp_{\C^r}\Psi_F(\cM)=\bigcup_{g\in G}\bigcup_{L\in S(F,\cM)}\bigcup_{\alpha\in \kappa(L)}(g\cdot(L\cdot\bs+\alpha)=0)\subseteq \C^n,\]
where $(g\cdot(L\cdot\bs+\alpha)=0)$ denotes the divisor inside $\C^r$ defined by $g\cdot(L\cdot\bs+\alpha)=0$.
\end{theorem}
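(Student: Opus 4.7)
The plan is to identify the relative support of $\Psi_F(\cM)$ with the $G$-orbit of the zero locus of a Bernstein--Sato ideal attached to $(F,\cM)$, and then invoke the structure theorem of Sabbah--Maisonobe (extended in BVWZ/BVWZ2) describing such zero loci as finite unions of translated hyperplanes with non-negative integral linear parts.

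\emph{Forward inclusion.} I would start from the construction recalled in \S\ref{subsec:!*relex}: $\cM(*D^{(r)}_F)$ is the analytic sheafification of $\cM(*D)\otimes_\C\C[\bs]\cdot F^\bs$, and $\cM(!D^{(r)}_F)$ is its $\shD_{X\times\C^r/\C^r}$-dual companion. Locally on a relatively compact open $U\subset X$, the Bernstein--Sato ideal $B_F(\cM)$ is generated by polynomials $b(\bs)$ appearing in functional equations of type
\[
b(\bs)\cdot m\, F^\bs\in \shD_U[\bs]\cdot m F\cdot F^\bs,
\]
whose existence is ensured by the finite generation results of Maisonobe and BVWZ. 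Away from $Z(B_F(\cM))$, each multiplication-by-$f_i$ on $\cM(*D^{(r)}_F)$ becomes invertible, and consequently the canonical inclusion $\cM(!D^{(r)}_F)\hookrightarrow \cM(*D^{(r)}_F)$ of Lemma \ref{lm:!*incl} restricts to an isomorphism there. This yields
\[
\supp_{\C^r}\Psi_F(\cM)\subseteq Z(B_F(\cM))
\]
on every relatively compact open subset of $X$.

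\emph{Structural description and $G$-equivariance.} Sabbah's theorem on Bernstein--Sato ideals, as sharpened by Maisonobe and in the relative holonomic setting by BVWZ/BVWZ2, asserts that on a relatively compact open, $Z(B_F(\cM))$ is a finite union of hyperplane translates
\[
L\cdot \bs+\alpha=0,\qquad L\in S(F,\cM)\subseteq \N_{\geq 0}^r,\ \alpha\in \kappa(L)\subseteq \C,
\]
producing the two finite sets in the statement. By Theorem \ref{thm:!*Gequiv} and Theorem \ref{thm:gnbGeq}, $\Psi_F(\cM)$ is $G$-equivariant with $G$ acting on $\C^r$ by integral translations, so $\supp_{\C^r}\Psi_F(\cM)$ is $G$-invariant and therefore contained in $\bigcup_{g\in G}\bigcup_{L,\alpha}\{g\cdot(L\cdot\bs+\alpha)=0\}$.

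\emph{Reverse inclusion.} To see that every component genuinely appears, I would choose a generic smooth point $\bal$ on a chosen hyperplane $L\cdot\bs+\alpha=0$ and specialize. By Corollary \ref{thm:suppRH} the vanishing/nonvanishing of $\Psi_F(\cM)$ at $\bal$ is detected by the Alexander complex at $Exp(\bal)$; a transversal restriction along the line $\bal+\C\cdot L^\vee$ reduces the question to the single-variable case of Kashiwara--Malgrange, where a root of the classical $b$-function along that direction forces the nearby cycle to be nontrivial. This produces a point of $\supp_{\C^r}\Psi_F(\cM)$ on every component, and $G$-invariance then yields the full union.

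The main obstacle is precisely this reverse inclusion: one needs to rule out spurious components of $Z(B_F(\cM))$ that do not contribute to the support, and this requires a careful generic-point specialization argument in the relative $\shD$-module setting that matches each BS-generator along each hyperplane component with nonvanishing of the quotient $\cM(*D^{(r)}_F)/\cM(!D^{(r)}_F)$ at generic points of that hyperplane. The forward inclusion and the hyperplane structure are, by contrast, essentially formal consequences of the definitions and the already established Sabbah--Maisonobe structure theorem together with the $G$-equivariance produced in \S\ref{subsec:gluemmex}.
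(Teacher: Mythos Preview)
Your forward inclusion contains a genuine error. The functional equation $b(\bs)\cdot\sN_0\subseteq\sN_{-1}$ only gives $\wt\sN_0=\wt\sN_{-1}$ away from $Z(B_F(\sN_0))$, i.e.\ one step in the chain. It does \emph{not} give $\cM(!D_F^{(r)})=\cM(*D_F^{(r)})$ away from $Z(B_F(\sN_0))$: for that you need $\wt\sN_k=\wt\sN_{k-1}$ for \emph{all} $k$, and the annihilator of $\sN_k/\sN_{k-1}$ is the $g_\bk$-translate of $B_F(\sN_0)$. Thus your claim $\supp_{\C^r}\Psi_F(\cM)\subseteq Z(B_F(\cM))$ is false in general---already for $r=1$ the support is an infinite discrete set while $Z(b)$ is finite. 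What is true is $\supp_{\C^r}\Psi_F(\cM)\subseteq\bigcup_{g\in G}g\cdot Z(B_F(\sN_0))$; $G$-invariance of the support cannot repair the incorrect intermediate inclusion.

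More seriously, you have inverted the roles of the two inclusions and missed the key input. In the paper the inclusion $\bigcup_{g,L,\alpha}(g\cdot(L\cdot\bs+\alpha)=0)\subseteq\supp_{\C^r}\Psi_F(\cM)$ is the \emph{easy} direction: by Proposition~\ref{prop:zbfsupp=} every point of $Z(B_F(\sN_0))$ lies in $\supp_{\C^r}(\sN_0/\sN_{-1})$, and $g\cdot(\sN_0/\sN_{-1})$ is a subquotient of $\sN_k/\sN_{-k}$ for $k\gg0$, which after sheafification coincides locally with $\Psi_F(\cM)$. No Riemann--Hilbert, no transversal slicing, no single-variable reduction is needed; invoking Corollary~\ref{thm:suppRH} is both out of logical order and unnecessary. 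The substantive direction is the reverse containment, and the tool you omit is the $(n{+}1)$-Cohen--Macaulayness of $\Psi_F(\cM)$ (Theorem~\ref{thm:gnbGeq}): via Proposition~\ref{prop:suppsubcmrh} it forces $\supp_{\C^r}\Psi_F(\cM)$ to be \emph{purely} of codimension one, so any component of the support must be a codimension-one component of some $\supp_{\C^r}(\sN_l/\sN_{l-1})=g_\bl\cdot Z(B_F(\sN_0))$, hence a hyperplane in the stated union. Without this purity, the higher-codimension components that $Z(B_F(\sN_0))$ may carry could not be excluded, and your ``essentially formal'' forward inclusion would not close to an equality.
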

In the theorem above, $S(F,\cM)$ is the union of all (primitive) slopes of the codimension-one components of the zero locus of a Bernstein-Sato ideal of $\cM$ along $F$, and for every $L\in S(F,\cM)$ the finite set $\kappa(L)$ is the set of all $\alpha\in\C$ such that $L\cdot \bs+\alpha=0$ defines an irreducible codimension-one component of the zero locus of the Bernstein-Sato ideal. See \S \ref{subsec:BSideal} for details. 

Corollary \ref{thm:suppRH} and Theorem \ref{thm:relsd} together give:
\begin{coro}\label{cor:suppam}
Locally on a relatively compact open subset of $X$, 
\[\supp_{(\C^*)^r}\psi_F(\DR(\cM))=\bigcup_{L\in S(F,\cM)}\bigcup_{\alpha\in \kappa(L)} (\bt^L=exp(2\pi\sqrt{-1}\alpha))\subseteq (\C^*)^r,\]
where $\bt^L=\prod_i t_i^{l_i}$ with $L=(l_1,\dots,l_r)$ and $(\bt^L=exp(2\pi\sqrt{-1}\alpha))$ is the divisor defined by $\bt^L=exp(2\pi\sqrt{-1}\alpha)$ in $(\C^*)^r$.
In particular, $\supp_{(\C^*)^r}\psi_F(\DR(\cM))$is a finite union of translated codimensional-one subtori inside $(\C^*)^r$.
\end{coro}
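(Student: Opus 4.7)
The plan is to combine Corollary \ref{thm:suppRH} and Theorem \ref{thm:relsd} and then perform an elementary descent along $Exp$. By Corollary \ref{thm:suppRH}, locally on a relatively compact open subset of $X$,
\[\supp_{(\C^*)^r}\psi_F(\DR(\cM))=Exp\big(\supp_{\C^r}\Psi_F(\cM)\big),\]
and Theorem \ref{thm:relsd} rewrites the right-hand argument as the union $\bigcup_{g\in G}\bigcup_{L\in S(F,\cM)}\bigcup_{\alpha\in \kappa(L)}(g\cdot(L\cdot\bs+\alpha)=0)$ of affine hyperplanes in $\C^r$. Since $Exp$ is surjective and commutes with unions, it suffices to identify the $Exp$-image of a single $G$-orbit of hyperplanes $\{(g\cdot(L\cdot\bs+\alpha)=0)\}_{g\in G}$ with the divisor $(\bt^L=exp(2\pi\sqrt{-1}\alpha))\subseteq(\C^*)^r$.

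For this, recall that $G=\Z^r$ acts on $\C[\bs]$ by $g\cdot s_j=s_j+n_j$ for $g=(n_1,\dots,n_r)$, so
\[g\cdot(L\cdot\bs+\alpha)=L\cdot\bs+\alpha+L\cdot n,\quad L\cdot n=\sum_{i=1}^r l_i n_i\in\Z.\]
Hence the $G$-translate of the hyperplane is $\{L\cdot\bs=-(\alpha+L\cdot n)\}$. Since $t_i=exp(-2\pi\sqrt{-1}s_i)$, we have $\bt^L=exp(-2\pi\sqrt{-1}\,L\cdot\bs)$, and on this hyperplane
\[\bt^L=exp\big(2\pi\sqrt{-1}(\alpha+L\cdot n)\big)=exp(2\pi\sqrt{-1}\alpha),\]
with the last equality using $L\cdot n\in\Z$. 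Thus the entire $G$-orbit maps into $(\bt^L=exp(2\pi\sqrt{-1}\alpha))$; conversely, since $G$ acts freely on $\C^r$ with quotient $(\C^*)^r$, every point of this divisor lifts back to one of the hyperplanes in the orbit. So the $Exp$-image equals the divisor, as reduced subsets.

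Taking the union over $L\in S(F,\cM)$ and $\alpha\in\kappa(L)$ yields the displayed formula. For the last sentence, $S(F,\cM)$ and each $\kappa(L)$ are finite by Theorem \ref{thm:relsd}, and each divisor $\bt^L=c$ with $c\in\C^*$ is the fiber of the character $\bt\mapsto\bt^L\colon(\C^*)^r\to\C^*$, whose connected components are translates of the kernel $\ker(\bt\mapsto\bt^L)$, a closed subgroup of $(\C^*)^r$ of codimension one. Hence the total union is a finite union of translated codimension-one subtori. I expect no substantial obstacle: all of the input is packaged in Theorem \ref{thm:relsd} and Corollary \ref{thm:suppRH}, and the remaining content is the exponential computation above, whose only mild subtlety is the freeness of the $G$-action ensuring that the descent introduces no further identifications.
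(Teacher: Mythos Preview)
Your proof is correct and follows exactly the route the paper intends: the paper simply states that Corollary~\ref{thm:suppRH} and Theorem~\ref{thm:relsd} ``together give'' the result, and you have filled in the elementary exponential descent that makes this precise. One small point worth making explicit in your surjectivity step: the reason every lift $s$ of a point on the divisor lands in some hyperplane of the $G$-orbit is that $L\in S(F,\cM)$ is \emph{primitive} by Definition~\ref{def:slopkappa}, so the map $\Z^r\to\Z$, $n\mapsto L\cdot n$, is surjective; the freeness of the $G$-action alone does not quite give this.
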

The study of $\supp_{(\C^*)^r}\psi_F(\DR(\cM))$ was initiated by Sabbah \cite{Sab90}, where the support is shown to be included in a union of translated subtori. 
In the case $\cM=\sO_X$, it is proved in \cite[Theorem 1.3]{BLSW} that $\supp_{(\C^*)^r}\psi_F(\C_X)$ is a finite union of torsion translated codimensional-one subtori from a purely topological approach. Corollary \ref{cor:suppam} gives a precise description of $\supp_{(\C^*)^r}\psi_F(\DR(\cM))$ in general.  
If additionally $\cM$ is quasi-unipotent along $F$ (for instance $\cM=\sO_X$) , then $\kappa(L)\subseteq \Q$ for every $L$ and hence $\supp_{(\C^*)^r}\psi_F(\DR(\cM))$ is a finite union of torsion translated subtori inside $(\C^*)^r$. In consequence, Theorem \ref{thm:RHALX}, Corollary \ref{thm:suppRH}, Theorem 
\ref{thm:relsd} and Corollary \ref{cor:suppam} together give a generalization of \cite[Theorem 1.5.1]{BVWZ} as well as \cite[Theorem 2 and Theorem 3]{BG}.

As mentioned above, $\supp_{(\C^*)^r}\psi_F(\C_X)$ has a purely topological interpretation by using (local) cohomology jumping loci \cite{BLSW}. But the cohomology jumping loci (of quasi-projective varieties or analytic germ complements) might contain torsion-translated subtori of higher codimension \cite{BWcjl, BW}. 
Then one can naturally ask how to give a $\shD$-module interpretation of such lower dimensional loci. Along this line, we propose a linearity conjecture (Conjecture \ref{conj:linarityextsupp}) by using the codimension filtration of Gabber-Kashiwara in the relative setting (see Appendix \ref{subsec:relcodimfil}). The linearity conjecture is interesting because it would imply a conjecture of Budur \cite{Budur} that the zero locus of the Bernstein-Sato ideal of $F$ is a finite union of translated linear subspaces of $\C^r$ defined over $\Q$ (see Proposition \ref{prop:myconjimplybudur}).

\subsection{Relative characteristic cycle and monodromy zeta function}\label{subsection:rcczetaindex}
Our next results are about understanding the irreducible divisor $$(g\cdot(L\cdot \bs+\alpha)=0)\subseteq \C^r$$ inside $\supp_{\C^r}\Psi_F(\cM)$ and their relations with relative characteristic cycles and the monodromy zeta functions. 

We now assume $\cM$ a regular holonomic $\shD_X$-module and $F=(f_1,\dots,f_r)$ an $r$-tuple of holomorphic functions on a complex manifold $X$. 
\begin{theorem}\label{thm:relccgny}
With assumptions above , we have the relative characteristic cycles
\[\CC^\rel(\cM(*D_F^{(r)}))=\CC^\rel(\cM(!D_F^{(r)}))=\CC(\cM(*D))\times \C^r\subseteq T^*X\times\C^r,\]
where $\CC^\rel$ and $\CC$ denote the relative characteristic cycle and the absolute one respectively and similarly for the characteristic variety $\Ch$.
Moreover, locally on a relatively compact open subset $W$ of $X$, the relative characteristic cycle of $\Psi_F(\cM)$ is an infinite sum
\[\CC^\rel(\Psi_F(\cM))=\sum_{g\in G}\sum_{L\in S(F,\cM)}\sum_{\alpha\in \tilde\kappa(L)} \Lambda_{L,\alpha}\times (g\cdot(L\cdot \bs+\alpha)=0)\]
where each $\Lambda_{L,\alpha}$ is a Lagrangian cycle supported on $\Ch(\cM(*D))$ $($over $D\cap W)$, and $\tilde\kappa(L)$ is $\kappa(L)$ modulo $L$-equivalence $($Definition \ref{def:slopkappa} $)$. 
\end{theorem}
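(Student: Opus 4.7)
The proof splits into two halves: computing $\CC^\rel(\cM(*D^{(r)}_F))$ and $\CC^\rel(\cM(!D^{(r)}_F))$, then decomposing $\CC^\rel(\Psi_F(\cM))$. For the first half, I would work locally with the explicit presentation of $\cM(*D^{(r)}_F)$ as the analytic sheafification of $\cM(*D)\otimes_\C \C[\bs]\cdot F^\bs$, on which the relative derivatives act by
\[
\partial_i(m\cdot F^\bs)=(\partial_i m)F^\bs+m\cdot F^\bs\sum_j s_j\frac{\partial_i f_j}{f_j}.
\]
Choosing a good filtration $F_\bullet\cM(*D)$ and extending $\C[\bs]$-linearly gives a filtration compatible with the relative order filtration on $\shD_{X\times\C^r/\C^r}$; because each $s_j(\partial_i f_j)/f_j$ is of relative order zero with values in $\sO_X(*D)[\bs]$, the $s_j$-contribution is invisible in the associated graded, which becomes $\gr^F\cM(*D)\otimes_{\sO_X}\sO_{X\times\C^r/\C^r}$. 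This yields $\CC^\rel(\cM(*D^{(r)}_F))=\CC(\cM(*D))\times \C^r$. The identity for the minimal extension then follows from its definition as the $\shD_{X\times\C^r/\C^r}$-dual of $(\D\cM)(*D^{(r)}_F)$, combined with the invariance of relative characteristic cycles under relative $\shD$-duality and the equality $\CC(\D\cM(*D))=\CC(\cM(*D))$.

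For the second half, the short exact sequence $0\to\cM(!D^{(r)}_F)\to\cM(*D^{(r)}_F)\to\Psi_F(\cM)\to 0$ combined with the first half forces each $T^*X$-component of $\CC^\rel(\Psi_F(\cM))$ to be an effective cycle supported on $\Ch(\cM(*D))$; since $\Psi_F(\cM)$ is supported over $D\times \C^r$, this part of the support in fact lies in $\Ch(\cM(*D))|_{D\cap W}$. On the $\C^r$-side, Theorem \ref{thm:relsd} describes $\supp_{\C^r}\Psi_F(\cM)$ locally on $W$ as the union of the irreducible divisors $(g\cdot(L\cdot \bs+\alpha)=0)$ with $g\in G$, $L\in S(F,\cM)$ and $\alpha\in\kappa(L)$. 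Reducing $\kappa(L)$ to $\tilde\kappa(L)$ modulo the $L$-equivalence removes repetitions, giving a bijection between the irreducible components of the support and the indexing set $G\times \bigsqcup_{L}\tilde\kappa(L)$. Decomposing $\CC^\rel(\Psi_F(\cM))$ along these irreducible components produces Lagrangian coefficients $\Lambda_{L,\alpha}$ inside $\Ch(\cM(*D))|_{D\cap W}$ and gives the claimed formula; local finiteness of $S(F,\cM)$ and of each $\kappa(L)$ over $W$, inherent in the Bernstein-Sato ideal description of \S \ref{subsec:BSideal}, ensures the indexing set is at most countable.

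The main obstacle is the first half---specifically, verifying that the $\C[\bs]$-linear extension of a good filtration on $\cM(*D)$ remains relative good after analytic sheafification, and that relative $\shD$-duality preserves the relative characteristic cycle in the present context. The sheafification point is exactly where the machinery of \S \ref{sec:shffm} is essential; without it one only has the corresponding statement for the algebraic tensor product $\cM(*D)\otimes_\C \C[\bs]\cdot F^\bs$. Once these two points are secured, the second half follows formally from Theorem \ref{thm:relsd} together with the Lagrangian decomposition of characteristic cycles along irreducible components of the support.
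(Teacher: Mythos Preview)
Your first-half argument has a genuine gap. You claim that the $\C[\bs]$-linear extension of a good filtration $F_\bullet\cM(*D)$ is compatible with the relative order filtration because ``each $s_j(\partial_i f_j)/f_j$ is of relative order zero''. But $\partial_i f_j/f_j$ lies in $\sO_X(*D)$, not in $\sO_X=F_0\shD_X$: it is an order-zero differential operator on $\cM(*D)$ but it is \emph{not} an element of $F_0\shD_X$, and multiplication by it need not send $F_p\cM(*D)$ into $F_{p+1}\cM(*D)$ for an arbitrary good filtration. So the compatibility $\partial_i\cdot F_p\sM\subseteq F_{p+1}\sM$ fails in general, and your associated-graded identification $\gr^F\sM\simeq\gr^F\cM(*D)\otimes_{\sO_X}\sO_{X\times\C^r/\C^r}$ is unjustified. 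The paper avoids this entirely: it filters the coherent submodules $\sN_k$ by $F_p(\sN_k)=(F_p\shD_X\otimes_\C\C[\bs])\cdot(\cM_0\cdot F^{\bs-\bk})$ and then uses the translation isomorphisms $\tau_\bal^{-1}\wt\sN_k\simeq\wt\sN_k$ to show $\CC^\rel(\wt\sN_k)$ is $\tau_\bal$-invariant, hence of the form $\Lambda\times\C^r$; the cycle $\Lambda$ is then identified with $\CC(\cM(*D))$ by restricting to a single integral fiber where $\wt\sM|_{X\times\{\ba\}}\simeq\cM(*D)$ (Theorem~\ref{thm:relholmaxexcc}). For the minimal extension the paper does not go through duality as you suggest but rather uses the local identification $\cM(!D_F^{(r)})|_{W\times V}=\wt\sN_{-k}|_{W\times V}$ for $k\gg0$ from \eqref{eq:locj!st}, reducing to the same computation.

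Your second-half argument is also incomplete. You correctly use Theorem~\ref{thm:relsd} to enumerate the irreducible components of $\supp_{\C^r}\Psi_F(\cM)$, but when you write ``decomposing $\CC^\rel(\Psi_F(\cM))$ along these irreducible components produces Lagrangian coefficients $\Lambda_{L,\alpha}$'', you are really producing a coefficient $\Lambda_{g,L,\alpha}$ for each component indexed by $(g,L,\alpha)$. The theorem asserts that this coefficient is \emph{independent of $g$}, and you have not argued this. Moreover, since $\Psi_F(\cM)$ is only relative coherent (not coherent over $\shD_{X,R}$), the decomposition of Lemma~\ref{lm:altrelhol} is a~priori only available over relatively compact $V\subseteq\C^r$, so even the constancy of the coefficient along a single hyperplane $(L\cdot\bs+\alpha=0)$ needs an argument. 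The paper handles both points with the same translation mechanism: locally over $V$ the coefficient over $(L\cdot\bs+\alpha=0)$ is some $\Lambda_{L,\alpha}$; one then ``slides'' $V$ along the hyperplane using the gluing data of \S\ref{subsec:gluemmex} to see the coefficient is constant, and finally invokes $\tau_\ba^*(\wt{\sN_k/\sN_{-k}})\simeq\wt{\sN_k/\sN_{-k}}$ to transport the coefficient unchanged to every $G$-translate $(g\cdot(L\cdot\bs+\alpha)=0)$.
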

The relative characteristic cycle of $\Psi_F(\cM)$ is a symmetric infinite sum because of the G-action on $\Psi_F(\cM)$ (if a $G$-equivariant sheaf is supported on a point, then it is supported on the $G$-orbit of the point). If $\wt W$ is a relatively compact open subset of $X\times\C^r$, then $$\big(\sum_{g\in G}\sum_{L\in S(F,\cM)}\sum_{\alpha\in \tilde\kappa(L)} \Lambda_{L,\alpha}\times (g\cdot(L\cdot s-\alpha)=0)\big)\big|_{\wt W}$$ becomes a finite sum since $X\times (g\cdot(L\cdot s+\alpha)=0)$ intersect $\wt W$ for only finite many $g\in G$. See Proposition \ref{prop:lambdaLalf1dim} for formulas of $\Lambda_{L,\alpha}$ in terms of the characteristic cycles of very general choices of one-dimensional nearby cycles associated to $\cM$. 

The relative characteristic cycle formula of $\cM(*D^{(r)}_F)$ in Theorem \ref{thm:relccgny}, as well as the irregular case in Theorem \ref{thm:relholmaxexcc}, has an application to the study of constructibility of the logarithmic de Rham complexes of lattices of holonomic modules \cite[Theorem 1.1]{Wuch}.

We now fix an arbitrary point $x\in X$. By Theorem \ref{thm:relsd} we can focus on $S(F,\cM)$ and $\tilde\kappa(L)$ (or $\kappa(L)$) for $L\in S(F,\cM)$ locally around $x$. By Theorem \ref{thm:relccgny}, both $S(F,\cM)$ and $\tilde\kappa(L)$ (or $\kappa(L)$) are constructible as $x$ varies on $X$ and hence so are $\supp_{(\C^*)^r}\psi_F(\DR(\cM))$ and $\supp_{\C^r}\Psi_F(\cM)$. More precisely, we fix a finite Whitney stratification
\[X=\bigsqcup_\beta X_\beta \textup{ such that } \Ch(\cM(*D))\subseteq \bigsqcup_\beta T_{X_\beta}^*X,\]
where $T_{X_\beta}^*X$ is the conormal bundle of the smooth strata $X_\beta$.
Then $S(F,\cM)$ and $\tilde\kappa(L)$ for $L\in S(F,\cM)$ are locally constant with respect to the above stratification. Moreover, for $L\in S(F,\cM)$ and $\alpha\in \tilde\kappa(L)$ locally around $x$, we have 
\[\Lambda_{L,\alpha}=\sum_\beta m_\beta(L,\alpha) \overline{T^*_{X_\beta}X}, \quad m_\beta(L,\alpha)\in \Z_{\ge 0}.\]

Next, we give a formula for the multiplicity $m_\beta(L,\alpha)$ by using the monodromy zeta function. By construction, $\psi_{F}(\DR(\cM))$ is a $\C$-constructible complex of sheaves of $\C[G]$-modules (see \cite[\S 8.5]{KSbook} for the definition of $\C$-constructibility). Hence, for each generic point $q$ of Supp$_{(\C^*)^r}(\psi_F(\DR(\cM)))$
\[\psi_{F,q}(\DR(\cM))=\psi_{F}(\DR(\cM))\otimes_{\C[G]}\C[G]_{q}\]
is a $\C$-constructible complex of sheaves of $\C[G]_{q}$-modules, where $\C[G]_{q}$ is the localization of $\C[G]$ at the prime ideal $q$. We define 
\[\chi_x(\psi_{F}(\DR(\cM)),q)=\sum_i (-1)^i \textup{lg}(\cH^i\psi_{F,q}(\DR(\cM))|_{x})\]
where lg denotes the length function, 
and 
\[\zeta_x(\psi_F(\DR(\cM)))=\sum_q \chi_x(\psi_{F}(\DR(\cM)),q)\bar q\]
where $q$ goes over the set of all the generic points of  Supp$_{(\C^*)^r}(\psi_F(\DR(\cM)))$ (since Supp$_{(\C^*)^r}(\psi_F(\DR(\cM)))$ is an algebraic closed subset of $(\C^*)^r$ by Corollary \ref{cor:supprelccsh}).
By $\C$-constructibility, $\chi_x(\psi_{F}(\DR(\cM),q)$ is a $\Z$-valued constructible function on $X$ and $\zeta_x(\psi_F(\DR(\cM)))$ is a constructible function on $X$ valued in the abelian group of algebraic subvarieties in $(\C^*)^r$. The constructible function $\zeta_x(\psi_F(\DR(\cM)))$ is the so-called \emph{monodromy zeta function} of $\psi_F(\DR(\cM))$ (cf. \cite[\S 2.5]{Sab90}).

We denote by $q(L,\alpha)\in \Spec \C[G]$ the generic point of $(\bt^L-exp(2\pi\sqrt{-1}\alpha)=0)$ for $L$ and $\alpha$. With the help of Corollary \ref{cor:suppam}, we obtain a precise formula:
\be\zeta_x(\psi_F(\DR(\cM)))=\prod_{L\in S(F,\cM)}\prod_{\alpha\in\tilde \kappa(L)} (\bt^L-exp(2\pi\sqrt{-1}\alpha))^{\chi_x(\psi_{F}(\DR(\cM)),q(L,\alpha))}, \ee
where we make the constructible function valued in rational functions. 

Using a local index formula of Kashiwara-Dubson-Ginsburg \cite[Theorem 8.2]{Gil}, we get a formula for $\chi_x(\psi_{F,q}(\DR(\cM)))$ and $m_{\beta_1}(L,\alpha)$:
\be \label{eq:genericindzetaf}
m_{\beta_1}(L,\alpha)=\sum_{ X_{\beta_1}\subseteq \bar X_\beta }(-1)^{\dim X_\beta}c(X_{\beta_1},X_\beta)\chi_{x_\beta}(\psi_{F}(\DR(\cM)),q(L,\alpha))),\ee
where
\[
c(X_{\beta_1},X_\beta)=
\begin{cases}
\chi^{top}(B_{x_{\beta_1}}\cap X_\beta\cap H), & \textup{ if } X_{\beta_1}\not=X_{\beta} \\
1, &\textup{ if } X_{\beta_1}=X_{\beta}.
\end{cases}
\]
with $B_{x_{\beta_1}}$ a small polydisc open neighborhood inside $X$ of some $x_{\beta_1}\in X_{\beta_1}$, $H$ a linear subspace of $X$ sufficiently close to $x_{\beta_1}$ of codimension $\dim X_{\beta_1}+1$ and $\chi^{top}$ denoting the topological Euler characteristic. 

\subsection{Outline of the paper} Section \ref{sec:*!ext} is about the construction of maximal and minimal extensions under the relative setting and their $G$-equivariance. In Section \ref{sec:recalalexsab}, we recall the construction of the Alexander complexes of Sabbah. In Section \ref{sec:comparison}, we discuss comparisons in the sense of relative Riemann-Hilbert correspondence and prove Theorem \ref{thm:RHALX} and Theorem \ref{thm:RHALXhigh}. In the appendix, we discuss the relative sheafification functor over commutative rings and discuss the properties of algebraic and analytic relative supports. Results in the appendix is standard and more or less well-known; we include it for completeness.  



\subsection*{Acknowledgement}
The author would like to thank Nero Budur, Linquan Ma, Mircea Musta\c{t}\v{a}, Ruijie Yang and Peng Zhou for useful discussions and comments and for answering questions. We are grateful to an anonymous referee for enormous very useful suggestions and corrections and for the change of the structure of the paper.   


\section{Relative maximal and minimal extensions along hypersurfaces}\label{sec:*!ext}
In this section, we construct the maximal and minimal extensions for $\shD$-modules under the relative setting analogous to the $^pj_!$ and $^pj_*$ extensions of affine (or Stein) open inclusions $j$ for perverse sheaves \cite{BBDG}. 

\subsection{Notations}\label{subsec:3notations}
We introduce notation for this whole section and we refer to the appendix for the notation not defined here. Let $X$ be a complex manifold with $\dim_\C X=n$. For some fixed $r\in \Z_{>0}$, we consider the morphism 
\[\pi=(\textup{id},Exp)\colon X\times\C^r\rightarrow X\times (\C^*)^r\]
where 
\[Exp\colon \C^r\rightarrow (\C^*)^r, (\alpha_1,\dots,\alpha_r)\mapsto(exp(-2\pi \sqrt{-1}\alpha_1),\dots,exp(-2\pi \sqrt{-1}\alpha_r))\] 
is the universal cover of $(\C^*)^r$. We write $G=\pi_1((\C^*)^r)$, the fundamental group of $(\C^*)^r$. The deck transformation of $Exp$ makes $X\times \C^r$ a free $G$-space (we always use the discrete topology of $G$). We also write \[R=\Sym (\C^r)^\vee\simeq \C[\bs]\]
the algebraic coordinate ring of $\C^r$, where 
$\Sym (\C^r)^\vee$ is the symmetric algebra of the dual vector space $(\C^r)^\vee$ and the isomorphism is induced from using the standard basis of $\C^r$. Then $G$ acts on $R$ induced by the $G$-action on $(\C^r)^\vee$. In algebraic coordinates, the $G$-action on $R$ is given by
\be\label{eq:gactiononR}
t_i\cdot s_j=
\begin{cases}
s_j+1, & \textup{ if } i=j \\
s_j, &\textup{ if } i\not=j.
\end{cases}
\ee
where $t_i$ represents the (counterclockwise) loops around the puncture of each factor $\C^*$ of $(\C^*)^r$ and such $t_i$ induce an isomorphism $G\simeq \Z^r$.

Let $F=(f_1,\dots,f_r)$ be an $r$-tuple of holomorphic functions on $X$ with $D_i$ the divisor of $f_i$ and let $\cM$ be a holonomic $\shD_X$-module. We write $D=\sum_i D_i$ and by
\[\cM(*D)=\cM\otimes_{\sO}\sO_X(*D)\]
the algebraic localization of $\cM$ along $D$, where
$\sO_X(*D)=\bigcup_{k\in \Z}\sO_X(kD)$
is the sheaf of meromorphic functions with poles along $D$.  We write 
\[\sM=\cM(*D)\otimes_\C\C[\bs]\cdot F^\bs.\]
We have natural actions
\be\label{eq:tildMalt}
\theta\cdot (m\cdot F^\bs)=(\theta\cdot m)\cdot F^\bs+m\cdot\sum_{i=1}^r \frac{s_i\theta(f_i)}{f_i}\cdot F^\bs\ee
for every section $m$ of $\cM(*D)$ and for holomorphic vector fields $\theta$ on $X$, which makes $\sM$ a $\shD_{X,R}$-module. 
Since $\cM(*D)$ is holonomic, by \cite[Theorem 3.1]{MalIr} one can assume 
\[\cM(*D)=\shD_X\cdot\cM_0\]
for $\cM_0$ a coherent $\sO_X$-submodule of $\cM(*D)$. We then fix such an $\cM_0$ throughout the remaining of this paper 
and define the $\shD_{X,R}$-submodules
\[\sN_k=\shD_{X,R}(\cM_0\cdot F^{\bs-\bk})\subseteq \sM\]
generated by $\cM_0\cdot F^{\bs-\bk}$ with $\bk=(k,k,\dots,k)$ for $k\in\Z$. By construction, $\shD_{X,R}(\cM_0\cdot F^\bs)$ is coherent over $\shD_{X,R}$ but $\sM$ might not be so, and 
\[\sN_{k_1}\subseteq \sN_{k_2} \textup{ for $k_1\le k_2$ and } \lim_{k\to\infty} \sN_k=\sM.\]

\subsection{$G$-action and translation on $\sM$}
The $G$-action on $R$ induces a $G$-action on $\sM$.
In algebraic coordinates, the $G$-action on $\sM$ is given by
\[t_i\cdot P(\bs)(m\cdot F^\bs)=P(\bs+e_i)(m\cdot F^{\bs+e_i})\]
for $P(\bs)\in \shD_{X,R}$, where $e_i\in\Z^r$ is the $i$-th unit vector. The $G$-action on $\sM$ induces an operation $g\cdot\sN_k\subset \sM$, i.e.
\[t_i\cdot\sN_k=\shD_X[\bs]\cdot F^{\bs-\bk+e_i}.\]


Since $\C^r$ is homogeneous, for every $\bal\in \C^r$ the translation by $\bal$ induces an isomorphism 
\[\tau_\bal\colon X\times\C^r\to X\times\C^r\quad (x,\bs)\mapsto (x,\bs+\bal).\]
Substituting $\bs$ by $\bs+\bal$ gives an $\shD_X$-linear isomorphism (but not $\C[\bs]$-linear)
\be\label{eq:subtrop}
\sN_k\simeq \shD_{X}[\bs+\bal](\cM_0\cdot F^{\bs+\bal-\bk})
\ee
which further induces an $pr_X^{-1}\shD_X$-linear isomorphism
\be\label{eq:alptriso}
\tau^{-1}_{\bal}\wt\sN_k\simeq \wt\sN_k,\ee
where $pr_X\colon X\times \C^r\to X$ is the projection.
The standard basis on $\C^r$  induces an isomorphism \[\Z^r\simeq G, \quad \ba=(a_1,\dots,a_r)\mapsto g_\ba=\sum_i a_i t_i.\]
Then for $\ba\in\Z^r$, by construction we have 
\be\label{eq:idetraandgac}
\tau^{-1}_\ba\wt\sN_k=\wt{g_\ba\cdot \sN_k}
\ee
for every $k$.


\subsection{Relative maximal and minimal extension}\label{subsec:!*relex}
We now are ready to construct the relative maximal and minimal extensions.

We first recall a result of Sabbah \cite{Sab2} about the existence of generalized $b$-functions. 
\begin{theorem}[Sabbah]\label{thm:bssab}
Let $\cM$ be a holonomic $\shD_X$-module (with a fixed $\cM_0$). Then, locally on a relatively compact open subset $W\subseteq X$, there exists a polynomial $b(\bs)\in\C[\bs]$ such that the following two conditions are satisfied:
\begin{enumerate}
    \item $b(\bs)=\prod_{L\in \Z^r_{\ge0}}\prod_{\alpha\in \C}(L\cdot\bs+\alpha)^{n_{L,\alpha}}$ over some finite sets of $L$ and $\alpha$,
    \item $b(\bs)\cdot \frac{\sN_0}{\sN_{-1}}=0$.
\end{enumerate}
\end{theorem}

\begin{prop}\label{prop:anstablej_*}
Let $\cM$ be a holonomic $\shD_X$-module. For every relatively compact open subset $W \subseteq X$ and for every relatively compact open subset $V\subseteq \C^r$, there exists an integer $l=l_{W,V}>0$ such that 
\begin{enumerate}
    \item ${\wt\sM}|_{W\times V}={\wt\sN_k}|_{W\times V}$ for all $k\ge l$.
    \item ${\wt\sN_{-k}}|_{W\times V}={\wt\sN_{-l}}|_{W\times V}$ for all $k\ge l$
\end{enumerate}
\end{prop}
\begin{proof}
By the construction of the analytic sheafification, we have 
\[b(\bs)\cdot \wt{\sN_0/\sN_{-1}}=0\]
where $b(\bs)$ is the generalized $b$-function in Theorem \ref{thm:bssab}. By translation, for every $k\in\Z$ with $\bk=(k,k,\dots,k)$, we have 
\be \label{eq:ufbfsab}
b(g_\bk\cdot\bs)\cdot \wt{\sN_{-k}/\sN_{-k+1}}=0,
\ee
Choose $l>0$ large enough (since $V$ is relatively compact) such that $b(g_\bk\cdot\bs)|_V$ are invertible for all $|k|>l$. Then by \eqref{eq:ufbfsab}, we have 
\[\wt\sN_k=\wt\sN_{k-1}\]
for all $|k|>l$. Part (2) then follows. Since $\displaystyle{\lim_{k\to\infty}} \sN_k=\sM$, part (1) also follows.
\end{proof}

The following theorem is a natural generalization of \cite[R\'esultat 1]{Mai} and \cite[Theorem 4.3.4]{BVWZ2}. Let us remark that one of the key steps of the proof is due to Maisonobe.
\begin{theorem}\label{thm:relholmaxexcc}
Let $\cM$ be a holonomic $\shD_X$-module. Then $\wt\sM$ and $\wt\sN_k$ are relative holonomic over the complex manifold $\C^r$ for all $k\in \Z$. Moreover, 
\[\CC^\rel(\wt\sM)=\CC^\rel(\wt\sN_k)=\CC(\cM(*D))\times \C^r.\]
\end{theorem}
\begin{proof}
By Proposition \ref{prop:anstablej_*} (1), $\sM$ is relative coherent over $\C^n$. Now we construct the relative characteristic cycles for $\wt\sN_k$ over a relatively compact open subset $W\subseteq X$. For simplicity we assume $X=W$.  
Then we define 
\[F_p(\sN_k)=(F_p\shD_X\otimes _\C \C[\bs])\cdot(\cM_0\cdot F^{\bs-\bk})\]
which gives a relative good filtration of $\sN_k$ over $\C[\bs]$ (cf. \cite[\S 3.1]{BVWZ}). Then $\wt{F_p(\sN_k)}$ gives a relative good filtration for $\wt\sN_k$ over $\C^r$ (cf. \cite[\S 2.1]{Wuch}). Using the isomorphism \eqref{eq:alptriso}, $\tau^{-1}_\bal\wt{F_p(\sN_k)}$ gives a relative good filtration of $\wt\sN_k$ such that 
\[\gr^{\tau^{-1}_\bal F}_\bullet (\wt\sN_k)\simeq\tau^{-1}_\bal\gr^F_\bullet \wt\sN_k,\]
Since characteristic varieties are independent of choices of good filtrations, this implies $\Ch^{\rel}(\wt\sN_k)$ is invariant under translation by arbitrary $\bal$. Thus, we have that $\Ch^{\rel}(\wt\sN_k)$ dominates $\C^r$ and $\Ch^{\rel}(\wt\sN_k)=\Lambda\times \C^k$ for some analytic conic subvariety $\Lambda\subseteq T^*X$. Since $\cM$ is holonomic, we can apply \cite[Proposition 13]{Mai} locally on $X$ and conclude that $\Lambda$ is Lagrangian. Therefore, $\wt\sN_k$ is relative holonomic over $\shD_{X\times \C^r/\C^r}$ for every $k$.  

We then pick $\ba=(a,a,\dots,a)\in \Z^r$ with $a\ll 0$. On a small neighborhood $V_\alpha$ of $\alpha$, by Proposition \ref{prop:anstablej_*} (1), we know 
\[\wt\sN_{k+a}|_{X\times V_\bal}=\wt\sM|_{X\times V_\bal}.\]
Thus, $\wt\sM|_{X\times V_\bal}$ is a relative holonomic $\shD_{X\times V_\bal/V_\bal}$-module.
Since $\sM$ is free over $\C[\bs]$, $\wt\sM|_{X\times (V_\bal)}$ is flat over $V_\bal$. Then by \cite[3.7.Lemme]{Sab2} we conclude that $\CC^\rel(\wt\sN_k)$ and $X\times\{\ba+\bal\}$ intersect properly and
\[\CC^\rel(\wt\sN_k)|_{X\times \{\ba+\bal\}}\simeq\CC^\rel(\wt\sN_{k+a})|_{X\times \{\bal\}}=\CC^\rel(\wt\sM)|_{X\times \{\bal\}}=\CC(\wt\sM|_{X\times \{\bal\}})\]
for every $\bal$. Since $\CC^\rel(\wt\sN_k)$ is invariant by arbitrary translation, $\CC^\rel(\wt\sN_k)$ and $X\times\{\bal\}$ intersect properly for every $\bal$ (i.e. $\CC^\rel(\wt\sN_k)$ is a fiberation over $\C^k$) and thus
\[\CC^{\rel}(\wt\sN_k)=\CC^{\rel}(\wt\sN_k)|_{X\times \{\bal\}}\times\C^r.\]
Taking $\bal=0$, since $\ba\in \C^n$ is a $\Z$-point,
\[\wt\sM|_{X\times \{\ba\}}\simeq \sM\otimes_{\C[\bs]}\C_{\ba}=\cM(*D)\]
where $\C_\ba$ is the residue field of $\ba\in \C^n$. We therefore obtain 
\[\CC^\rel(\wt\sN_k)=\CC(\cM(*D))\times\C^r\]
for every $k$.
Thanks to Proposition \ref{prop:anstablej_*} again, we have
\[\CC^\rel(\wt\sM)|_{X\times V_\bal}=\CC^\rel(\wt\sN_k)|_{X\times V_\bal}=\CC(\cM(*D))\times V_\bal \]
for $k\gg 0$.
Thus, globally 
\[\CC^\rel(\wt\sM)=\CC(\cM(*D))\times \C^r.\]
\end{proof}
\begin{remark}\label{rmk:necansh}
From the proof Proposition \ref{prop:anstablej_*}, one can see that it is necessary to use the analytic sheafification of $\sM$. To be more precise, even if $\cM$ is an algebraic holonomic $\shD_X$-module on a smooth complex algebraic variety $X$, $\wt\sM^{\textup{alg}}$ (cf. Remark \ref{rmk:algshd}) is not relative coherent over $\Spec R$, since every Zariski open subset of $\Spec R$ intersects the divisors $(b(g_\bk\cdot\bs)=0)$ for all but a finite number of $k\in \Z$, where $b(\bs)$ is the generalized $b$-function as in Theorem \ref{thm:bssab}. 
\end{remark}

We now write 
\[\wt\sM=\cM(*D^{(r)}_F),\]
calling it the \emph{relative maximal extension} of $\cM$ along $F$. Next we construct the minimal one. 
\begin{prop}\label{prop:j_*ncm}
If $\cM$ is holonomic, then $\wt\sM=\cM(*D^{(r)}_F)$ is $n$-Cohen-Macaulay.
\end{prop}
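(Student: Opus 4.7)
\emph{Approach and setup.} My plan is to combine the product structure of the relative characteristic cycle from Theorem \ref{thm:relholmaxexcc} with Auslander regularity of $\wt\shD_{X,R}$: for each $l\ne n$, the sheaf $\Ext^l_{\wt\shD_{X,R}}(\wt\sM,\wt\shD_{X,R})$ would have to be simultaneously ``thin'' (its characteristic cycle of codimension at least $l$ in $T^*X\times \C^r$, by Auslander) and $\C^r$-translation-invariant (inherited from $\wt\sM$), which together force vanishing. By Proposition \ref{prop:anstablej_*}(1), on any relative compact open $W\times V\subseteq X\times \C^r$ we have $\wt\sM|_{W\times V}=\wt\sN_k|_{W\times V}$ for $k$ sufficiently large, and since Cohen-Macaulayness is stalk-local it suffices to prove each such $\wt\sN_k$ is $n$-Cohen-Macaulay. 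By Theorem \ref{thm:relholmaxexcc}, $\CC^\rel(\wt\sN_k)=\CC(\cM(*D))\times \C^r$ has pure dimension $n+r$, so Lemma \ref{lm:angrnchreq} yields $j(\wt\sN_{k,(x,p)})=n$ at every $(x,p)\in\supp\wt\sN_k$, giving $\Ext^l_{\wt\shD_{X,R}}(\wt\sN_k,\wt\shD_{X,R})=0$ for $l<n$.

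\emph{High-degree vanishing.} For $l>n$, Auslander regularity of $\wt\shD_{X,R}$ (which holds locally because the order filtration has commutative polynomial associated graded) gives the grade bound $j(\Ext^l)\ge l$, whence $\dim\CC^\rel(\Ext^l)\le 2n+r-l$ by Lemma \ref{lm:angrnchreq}. On the other hand, $\Ext^l$ inherits $\C^r$-translation invariance from $\wt\sN_k$: combining \eqref{eq:alptriso} with the ring automorphism of $\wt\shD_{X,R}$ induced by $\tau_\bal$ gives a natural isomorphism $\tau_\bal^{-1}\Ext^l\simeq\Ext^l$, and the same good-filtration argument as in the proof of Theorem \ref{thm:relholmaxexcc} forces $\CC^\rel(\Ext^l)=\Lambda_l\times\C^r$ for some $\Lambda_l\subseteq T^*X$ with $\dim\Lambda_l\le 2n-l<n$. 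If $\Ext^l$ were non-zero, specialization to a general fiber over $\C^r$ (via Lemma \ref{lm:basechangeanddual}) would produce a non-zero coherent $\shD_X$-module with characteristic variety $\Lambda_l$; but the absolute Bernstein inequality demands $\dim\Lambda_l\ge n$, contradicting the bound. Hence $\Ext^l=0$ for $l>n$.

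\emph{Main obstacle.} The delicate point is transferring the translation invariance to the Ext sheaves, since \eqref{eq:alptriso} is only $pr_X^{-1}\shD_X$-linear rather than $\wt\shD_{X,R}$-linear. I would handle this by lifting $\tau_\bal$ to a ring automorphism of $\wt\shD_{X,R}$: under this lift, $\tau_\bal^{-1}R\Hom_{\wt\shD_{X,R}}(\wt\sN_k,\wt\shD_{X,R})$ identifies with $R\Hom_{\wt\shD_{X,R}}(\tau_\bal^{-1}\wt\sN_k,\wt\shD_{X,R})$, after which \eqref{eq:alptriso} produces the required isomorphism of Ext sheaves, and passage to good filtrations then gives the translation invariance of $\CC^\rel(\Ext^l)$ by the same argument used for $\wt\sN_k$ itself in the proof of Theorem \ref{thm:relholmaxexcc}.
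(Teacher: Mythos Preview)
Your argument has a genuine gap at the step where you claim translation invariance of $\Ext^l_{\wt\shD_{X,R}}(\wt\sN_k,\wt\shD_{X,R})$. You correctly identify in your ``main obstacle'' paragraph that \eqref{eq:alptriso} is only $pr_X^{-1}\shD_X$-linear, but your proposed fix --- lifting $\tau_\bal$ to a ring automorphism of $\wt\shD_{X,R}$ and then applying \eqref{eq:alptriso} --- does not work. The point is that once you give $\tau_\bal^{-1}\wt\sN_k$ its natural $\wt\shD_{X,R}$-structure via that ring automorphism, it is \emph{not} isomorphic to $\wt\sN_k$ as a $\wt\shD_{X,R}$-module. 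For integral $\bal=\ba\in\Z^r$ this is transparent from \eqref{eq:idetraandgac}: one gets $\tau_\ba^{-1}\wt\sN_k=\wt{g_\ba\cdot\sN_k}$, and $g_\ba\cdot\sN_k$ is a genuinely different $\shD_X[\bs]$-submodule of $\sM$ than $\sN_k$ (for instance their specializations at a fixed $\bs_0\in\C^r$ differ). So there is no reason for $\Ch^\rel(\Ext^l(\wt\sN_k))$ to be a product $\Lambda_l\times\C^r$; indeed the paper's own argument shows the non-CM locus of $\wt\sN_0$ sits over a \emph{proper} closed subset $V\subsetneq\C^r$, and the remark immediately following the proposition states explicitly that $\wt\sN_k$ is not $n$-Cohen--Macaulay in general. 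Your argument, if it went through, would prove exactly that, so it cannot be right.

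The paper's proof exploits precisely the failure you are trying to avoid. It first uses Auslander regularity and relative holonomicity (as you do) to confine the non-CM locus of $\wt\sN_0$ to $W\times V$ with $V\subsetneq\C^r$ proper. The key move is then to use an \emph{integral} translation $\tau_{-\ba}$ with $\ba\in\Z^r_{\ge0}$ large: this does not leave $\wt\sN_0$ invariant, but rather (by \eqref{eq:idetraandgac} and Proposition~\ref{prop:anstablej_*}) identifies $\tau_{-\ba}^{-1}\wt\sN_0$ with $\wt\sM$ in a neighbourhood of the chosen point $(x,p)$, while simultaneously pushing the bad locus $\tau_\ba(W\times V)$ away from $(x,p)$. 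So Cohen--Macaulayness of $\wt\sM$ at $(x,p)$ is inherited from Cohen--Macaulayness of $\wt\sN_0$ at a \emph{different} (far-away) point, not from any global translation invariance of $\wt\sN_0$ itself.
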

\begin{proof}
We fix a point $(x,p)\in X\times\C^r$. First, we know $(\wt\shD_{X,R})_{(x,p)}$ is Auslander regular (by for instance \cite[Theorem 4.3.2]{BVWZ}). Then by Auslander regularity, the grade number 
\[j(\Ext^k_{(\wt\shD_{X,R})_{(x,p)}}(\wt\sN_0, (\wt\shD_{X,R})_{(x,p)}))\ge k\]
for each $k$. Also, for all $k$ as right coherent $\wt\shD_{X,R}$-modules
\[\Ext^k_{\wt\shD_{X,R}}(\wt\sN_0, \wt\shD_{X,R})\]
are relative holonomic, as so is $\wt\sN$ by Theorem \ref{thm:relholmaxexcc}. By Lemma \ref{lm:angrnchreq}, we then know 
\[\Ext^k_{\wt\shD_{X,R}}(\wt\sN_0, \wt\shD_{X,R})=0 \textup{ for } k<n\]
and 
\[\dim_\C(\Ch^\rel(\Ext^k_{\wt\shD_{X,R}}(\wt\sN_0, \wt\shD_{X,R})))\le n+r-k,\textup{ for }n<k\le n+r.\]
By Lemma \ref{lm:altrelhol}, we conclude that $\wt\sN_0$ is $n$-Cohen-Macaulay away from $W\times V$ for $V\subsetneq \C^r$ some closed algebraic subvariety, where $W$ is a small neighborhood of $x\in X$. We now translate $\sN_0$ by $\tau_{-\ba}$ for some $\ba\in\Z^r_{\ge0}$ with each $a_i\gg 0$ so that $(x,p)\not\in \tau_{\ba}(W\times V)$. Thus, ${\tau^{-1}_{-\ba}\wt\sN_0}$ is Cohen-Macaulay around $(x,p)$. But from the proof of Proposition \ref{prop:anstablej_*}, such ${\tau^{-1}_{-\ba}\wt\sN_0}$ is just $\wt\sM$ around $(x,p)$.
\end{proof}

\begin{remark}
\begin{enumerate}
\item After applying Theorem \ref{thm:relholmaxexcc}, Proposition \ref{prop:j_*ncm} also follows from \cite[Proposition 2]{FS}
\item In general, $\wt\sN_k$ are not $n$-Cohen-Macaulay. But since $\wt\sN_k\subseteq \wt\sM$ is a submodule of a $n$-Cohen-Macaulay module, $\wt\sN_k$ is always pure of codimension $n$ for every $k$ (by Proposition \ref{prop:suppsubcmrh}).
\end{enumerate}
\end{remark}


We write by $\wt \sM_{\D\cM}$ the analytic sheafification of $(\D\cM)(*D)\otimes_{\C} \C[\bs]\cdot F^{-\bs}$. We apply Proposition \ref{prop:j_*ncm} for $\D\cM$, and thus see that $\wt\sM_{\D\cM}$ is $n$-Cohen-Macaulay and relative holonomic over $\C^r$. We then define 
\[\cM(!D_F^{(r)})=\D(\wt\sM_{\D\cM})\]
which is a relative holonomic $\shD$-module over $\C^r$, thanks to Cohen-Macaulayness. 
In particular, if $r=0$, then we have 
\[\cM(!D_F^{(0)})=\cM(!D)=\D((\D\cM)(*D)).\]
Notice that in the definition of  $\cM(!D_F^{(r)})$ we use the symbol $F^{-\bs}$ instead of $F^{\bs}$ because $\D$ naturally maps  $F^{-\bs}$ back to $F^{\bs}$ (see \cite[Lemma 5.3.1]{BVWZ2}).

Since $\cM(!D_F^{(r)})|_{(X\setminus D)\times\C^r}=\cM(*D_F^{(r)})|_{(X\setminus D)\times\C^r}$, the morphism $\sO_X\hookrightarrow \sO_X(*D)$ induces a natural morphism 
\[\cM(!D_F^{(r)})\longrightarrow \cM(*D_F^{(r)}).\]
\begin{lemma}\label{lm:!*incl}
For $\cM$ a holonomic $\shD_X$-module, if $r\ge 1$, then the natural morphism 
\[\cM(!D_F^{(r)})\hookrightarrow \cM(*D_F^{(r)})\]
is injective. 
\end{lemma}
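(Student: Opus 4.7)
The plan is to prove that $K := \ker\bigl(\cM(!D_F^{(r)})\to\cM(*D_F^{(r)})\bigr)$ vanishes by combining Cohen--Macaulay purity on the source with a generic-fiber argument which is made possible precisely by the hypothesis $r\ge 1$.

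First I would set up the Cohen--Macaulay and characteristic-cycle data. By Proposition~\ref{prop:j_*ncm} applied to $\D\cM$, $\wt\sM_{\D\cM}$ is $n$-Cohen--Macaulay; Auslander-regularity of $\wt\shD_{X,R}$ together with biduality then implies that $\cM(!D_F^{(r)})=\D\wt\sM_{\D\cM}$ is also $n$-Cohen--Macaulay and relative holonomic. By Theorem~\ref{thm:relholmaxexcc} applied to $\D\cM$ and the invariance of characteristic cycles under relative duality,
\[
\CC^\rel\bigl(\cM(!D_F^{(r)})\bigr)\,=\,\CC\bigl((\D\cM)(*D)\bigr)\times\C^r,
\]
so every irreducible component of $\Ch^\rel(\cM(!D_F^{(r)}))$ has the form $\Lambda_i\times\C^r$ for some Lagrangian $\Lambda_i\subseteq T^*X$. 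The map is the identity on $(X\setminus D)\times\C^r$ (both sides restrict to the analytic sheafification of $\cM|_{X\setminus D}\otimes_\C\C[\bs]F^\bs$), so $\supp K\subseteq D\times\C^r$.

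Working locally on a relatively compact $W\subseteq X$ so that $K$ is coherent, suppose for contradiction that $K\ne 0$. Since $K$ is a coherent submodule of the $n$-Cohen--Macaulay module $\cM(!D_F^{(r)})$, Proposition~\ref{prop:suppsubcmrh} (case $j=0$) shows that $K$ is pure of codimension $n$ in $T^*X\times\C^r$, so every irreducible component of $\Ch^\rel(K)$ has dimension $n+r$. As $\Ch^\rel(K)\subseteq\bigcup_i\Lambda_i\times\C^r$ and these ambient components themselves have dimension $n+r$, each component of $\Ch^\rel(K)$ must be of the form $\Lambda_i\times\C^r$. Projecting to $\C^r$ and applying Proposition-Definition~\ref{prop-def:relsupp} gives $\supp_{\Specan R}(K)=\C^r$; in particular for every $\bal\in\C^r$ there exists $x\in X$ with $K_{(x,\bal)}\ne 0$.

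To contradict this, I would exhibit a Zariski-dense open of $\bal$ on which the map is a stalk-wise isomorphism. Choose $\bal\in\C^r$ avoiding the countable union of hyperplanes $\bigcup_{\bk\in\Z^r}\{b(\bs+\bk)=0\}$ provided by Theorem~\ref{thm:bssab} for both $\cM$ and $\D\cM$. By Proposition~\ref{prop:anstablej_*} and the translation isomorphism~\eqref{eq:alptriso}, near such $\bal$ the sheaves $\wt\sM$ and $\wt\sM_{\D\cM}$ become $\sO_{\C^r}$-locally free, with fibers identifying (after choosing a local branch of $F^\bal$) with the $\shD_X$-module twists $\cM(*D)\cdot F^\bal$ and $(\D\cM)(*D)\cdot F^{-\bal}$ respectively. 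Using Lemma~\ref{lm:basechangeanddual} (base change for duality), the fiber of $\cM(!D_F^{(r)})$ at $\bal$ is then the absolute dual $\D_\C\bigl((\D\cM)(*D)\cdot F^{-\bal}\bigr)\cong\cM(!D)\cdot F^\bal$, and the stalk map becomes the natural morphism $(\cM(!D)\cdot F^\bal)_x\to(\cM(*D)\cdot F^\bal)_x$. For $\bal$ in our chosen dense open, the monodromies $e^{-2\pi\sqrt{-1}\alpha_i}$ are sufficiently generic that the nearby cycles of $\cM(*D)\cdot F^\bal$ along each $f_i$ vanish, so this absolute map is an isomorphism. Hence $K_{(x,\bal)}=0$ for every $x$, contradicting $\bal\in\supp_{\Specan R}(K)$; therefore $K=0$.

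The main obstacle is the last step: identifying the fibers of $\cM(!D_F^{(r)})$ with $\cM(!D)\cdot F^\bal$ and verifying the fiber map is an isomorphism for generic $\bal$. This rests on three ingredients acting in concert: the stabilization provided by Theorem~\ref{thm:bssab} and Proposition~\ref{prop:anstablej_*}, the commutation of relative duality with base change in Lemma~\ref{lm:basechangeanddual}, and the multivariable generic-vanishing for nearby cycles under rank-one twists. The hypothesis $r\ge 1$ is used precisely to provide a parameter space on which such a generic $\bal$ exists.
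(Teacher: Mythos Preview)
Your setup through the purity step is correct and matches the paper: if $K\ne 0$, then as a coherent submodule of the $n$-Cohen--Macaulay module $\cM(!D_F^{(r)})$, Proposition~\ref{prop:suppsubcmrh} forces $K$ to be pure of codimension $n$, so each component of $\Ch^\rel(K)$ is of the form $\Lambda_i\times\C^r$ and hence $\supp_{\C^r}K=\C^r$.

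The contradiction step, however, has a genuine gap. The assertion that ``the nearby cycles of $\cM(*D)\cdot F^\bal$ along each $f_i$ vanish'' for generic $\bal$ is false: twisting by a generic rank-one local system shifts the monodromy eigenvalues of $\psi_{f_i}$ but does not annihilate it. What one could hope for is that the \emph{unipotent} part $\psi_{f_i,1}(\cM_\bal)$ vanishes for generic $\bal$; but even granting this for each $i$, it does not obviously force the absolute map $\cM_\bal(!D)\to\cM_\bal(*D)$ to be an isomorphism when $D=\sum_iD_i$ is not a single smooth hypersurface, and you give no argument bridging this. Note also that you need the fiber map to be an \emph{isomorphism} (not merely injective) to conclude $K_{(x,\bal)}=0$ via Nakayama, which is a genuinely stronger statement; in fact this isomorphism for generic $\bal$ is essentially a consequence of the lemma together with Theorem~\ref{thm:relsd}, so invoking it here is close to circular.

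The paper avoids fiberwise analysis entirely. It first identifies the image $\cM(!*D_F^{(r)})$ locally on $W\times V$: by analytic Nullstellensatz the image contains $\tau_\ba^{-1}\wt\sN_0$ for $\ba\gg 0$, and by the minimality property of $\cM(!D_F^{(r)})$ (no nonzero quotient supported on $D\times\C^r$) it must equal $\tau_\ba^{-1}\wt\sN_0$. By Theorem~\ref{thm:relholmaxexcc} this image has the same relative characteristic cycle as $\cM(*D_F^{(r)})$, and by the argument of Proposition~\ref{prop:j_*ncm} it is $n$-Cohen--Macaulay. One then dualizes the short exact sequence $0\to K\to\cM(!D_F^{(r)})\to\cM(!*D_F^{(r)})\to 0$ to obtain $0\to(\D\cM)(!*D_F^{(r)})\to(\D\cM)(*D_F^{(r)})\to\D K\to 0$; since the first two terms have equal relative characteristic cycles, additivity forces a multiplicity contradiction with $\Ch^\rel(\D K)=\Lambda_K\times\C^r$. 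The ingredient your argument lacks is precisely this identification of the image with a module whose characteristic cycle is already computed.
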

\begin{proof}
We fix a small neighborhood $W\times V$ of a point $(x,p)\in X\times\C^r$. By construction, $\cM(*D_F^{(r)})$  has no non-zero coherent submodule supported  on $D\times\C^r$. By duality, 
$\cM(!D_F^{(r)})$  has no non-zero coherent quotient module supported  on $D\times\C^r$. Then the image of $\cM(!D_F^{(r)})\to \cM(*D_F^{(r)})$, denoted by $\cM(!*D_F^{(r)})$, is the minimal extension. Since the cokernel of $\cM(!D_F^{(r)})\to \cM(*D_F^{(r)})$ is supported on $D\times\C^r$, by analytic nullstellensatz, 
\[(\wt{\cM_0\cdot F^{\bs+\ba}})|_{W\times V}\subseteq \cM(!*D_F^{(r)})|_{W\times V}\]
for some $\ba\in\Z^r_{\ge 0}$ with each $a_i\gg0$, and thus
\[(\tau^{-1}_{\ba}\wt\sN_{0})|_{W\times V}\subseteq \cM(!*D_F^{(r)})|_{W\times V}\]
By minimality, 
\be\label{eq:miniex}
\cM(!*D_F^{(r)})|_{W\times V}=(\tau^{-1}_{\ba}\wt\sN_{0})|_{W\times V}\ee
(since otherwise $\frac{ \cM(!*D_F^{(r)})|_{W\times V}}{(\tau^{-1}_{\ba}\wt\sN_{0})|_{W\times V}}$ is a non-zero coherent quotient module of $\cM(!D_F^{(r)})$ supported  on $D\times\C^r$).
Similar to the proof of Proposition \ref{prop:j_*ncm}, $(\tau^{-1}_{\ba}\wt\sN_{0})|_{W\times V}$ is $n$-Cohen-Macaulay and hence so is $\cM(!*D_F^{(r)})|_{W\times V}$.

We use $\cK$ to denote the kernel on $W\times V$. If $\cK\not=0$, then we have a short exact sequence 
\[0\to \cK\to \cM(!D_F^{(r)})|_{W\times V}\to \cM(!*D_F^{(r)})|_{W\times V}\to0.\]
Since $\cK$ is relative holonomic, by Proposition \ref{prop:suppsubcmrh} and Proposition \ref{lm:altrelhol}, 
\[\Ch^\rel(\cK)=\Lambda_\cK\times\C^r\]
for some Lagrangian subvariety $\Lambda_\cK\subseteq T^*X|_W$. Taking the dual, we have 
\[0\to (\D\cM)(!*D_F^{(r)})|_{W\times V}\to (\D\cM)(*D_F^{(r)})|_{W\times V}\to\D(\cK)\to0.\]
By \eqref{eq:miniex} (replacing $\cM$ by $\D\cM$) and Theorem \ref{thm:relholmaxexcc},
\[\CC((\D\cM)(!*D_F^{(r)})|_{W\times V})=\CC((\D\cM)(*D_F^{(r)})|_{W\times V}).\]
Since $\Ch^\rel(\D(\cK))=\Lambda_\cK\times\C^r$, we get a contradiction by counting multiplicities. 
\end{proof}

An immediate consequence of the inclusion in the lemma above is that $\cM(!D_F^{(r)})$ is the \emph{minimal extension} (analogous to the Deligne-Goresky-MacPherson extensions for perverse sheaves \cite[\S 8.2.1]{HTT}). Therefore, when $\cM$ is holonomic, $\cM(!D_F^{(r)})$ satisfies the property that it has no non-trivial submodule (or quotient module) supported on $D\times \C^r$ in the category of relative coherent $\wt\shD_{X,R}$-modules. By this property, in the situation of Proposition \ref{prop:anstablej_*},  we have 
\be\label{eq:locj!st}
\cM(!D_F^{(r)})|_{W\times V}=\wt\sN_{-k}|_{W\times V}
\ee
for all $k\ge l_{W\times V}$.

\subsection{Gluing data and $G$-equivariance}\label{subsec:gluemmex}
Now we discuss how to ``glue"  the local data in Proposition \ref{prop:anstablej_*} to construct the maximal and minimal extensions alternatively.

We keep assuming $\cM$ a holonomic $\shD_X$-module and $W$ a relatively compact open subset of $X$. We pick a locally finite covering 
\[\C^r=\bigcup_\beta V_\beta\]
such that each $V_\beta$ is relatively compact. For instance, we can pick a finite covering for the unit polydisc in $\C^r$ and then translate the covering by $\Z^r$ to get such a covering of $\C^r$. We can tautologically ``glue" $\cM(!D^{(r)}_F)$ with the help of Proposition \ref{prop:anstablej_*} (2) as follows: 
\begin{enumerate}[label=(\roman*)]
    \item For every $V_{\beta_0}$, we have 
    \[\cM(!D^{(r)}_F)|_{W\times V_{\beta_0}}=\wt\sN_{-l_{W,V_{\beta_0}}}|_{W\times V_{\beta_0}}\]
    \item We set \[k_0=\max\{l_{W,V_{\beta_i}}\mid V_{\beta_0}\cap V_{\beta_i}\not=\emptyset\}.\]
    \item If $V_{\beta_0}\cap V_{\beta_i}\not=\emptyset$, we can ``glue" $\wt\sN_{-l_{W,V_{\beta_0}}}|_{W\times V_{\beta_0}}$ and $\wt\sN_{-l_{W,V_{\beta_i}}}|_{W\times V_{\beta_i}}$ since 
    \[\wt\sN_{-l_{W,V_{\beta_0}}}|_{W\times (V_{\beta_0}\cap V_{\beta_i})}=\wt\sN_{-k_0}|_{W\times (V_{\beta_0}\cap V_{\beta_i})}= \wt\sN_{-l_{W,V_{\beta_i}}}|_{W\times (V_{\beta_0}\cap V_{\beta_i})}.\]
\end{enumerate}
Notice that the cocycle condition of ``gluing" is guaranteed by Condition (ii). Using Proposition \ref{prop:anstablej_*} (1), we can ``glue" $\cM(*D^{(r)}_F)$ similarly.
\begin{theorem}\label{thm:!*Gequiv}
Let $\cM$ be a holonomic $\shD_X$-module. Then both $\cM(*D^{(r)}_F)$ and $\cM(!D^{(r)}_F)$ are $G$-equivariant sheaves of abelian groups on $X\times \C^r$ and the inclusion
\[\cM(!D^{(r)}_F)\hookrightarrow\cM(*D^{(r)}_F)\]
is a $G$-equivariant morphism (see \cite[Part I.0.2]{BL} for definitions). 
\end{theorem}
\begin{proof}
By \eqref{eq:alptriso} and \eqref{eq:idetraandgac}, we have for every $g\in G$
\[\wt{g\cdot\sN_k}\simeq \wt\sN_k\]
In particular,
\be\label{eq:anshgact}
\wt{g\cdot\sN_k}|_{W\times V_{\beta_0}}\simeq \wt\sN_k|_{W\times V_{\beta_0}}.
\ee
The isomorphism above is obviously compatible with the ``gluing" data for both $\cM(*D^{(r)}_F)$ and $\cM(!D^{(r)}_F)$. 
Since $G$ acts on $X\times \C^r$, for every $g\in G$ we have an isomorphism 
\[g: X\times \C^r\to X\times \C^r.\]
The ``gluing" data and \eqref{eq:anshgact} together give isomorphisms 
\be\label{eq:geqvassab}
g^{-1}(\cM(*D^{(r)}_F))\simeq\cM(*D^{(r)}_F)\textup{ and } g^{-1}(\cM(!D^{(r)}_F))\simeq\cM(!D^{(r)}_F).
\ee
Since $G$ is a discrete group, by \cite[Part I.0.2.Remark]{BL} the isomorphisms above make  $\cM(*D^{(r)}_F)$ and $\cM(!D^{(r)}_F)$ $G$-equivariant sheaves of abelian groups (one can easily check the cocycle condition). 

The inclusion 
$\cM(!D^{(r)}_F)\hookrightarrow\cM(*D^{(r)}_F)$ is $G$-equivariant because of the following commutative diagram
\[\begin{tikzcd}
 \sN_l\arrow[r,"\simeq"]\arrow[d,hook] & g\cdot \sN_l\arrow[d,hook] \\
  \sN_k\arrow[r,"\simeq"] & g\cdot \sN_k)
\end{tikzcd}
\]
for all $l<k$.
\end{proof}
For $\cM$ a holonomic $\shD_X$-module, we now define 
\[\Psi_F(\cM)=\frac{\cM(*D^{(r)}_F)}{\cM(!D^{(r)}_F)}.\]
By Lemma \ref{lm:!*incl}, another interpretation of $\Psi_F(\cM)$ is 
\[\Psi_F(\cM)[-1]\stackrel{q.i.}{\simeq} R\Gamma_{[D\times \C^r]}(\cM(!D^{(r)}_F)).\]

\begin{theorem}\label{thm:gnbGeq}
Let $\cM$ be a holonomic $\shD_X$-module. Then $\Psi_F(\cM)$ is $G$-equivariant and relative holonomic over $\C^r$. If moreover, $\Psi_F(\cM)\not=0$, then it is $(n+1)$-Cohen-Macaulay.
\end{theorem}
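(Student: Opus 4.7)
The plan is to handle the three assertions of the theorem in increasing order of difficulty. $G$-equivariance and relative holonomicity come essentially for free from the material already established, while the Cohen--Macaulay property is the only step requiring a genuine argument, based on duality together with Lemma \ref{lm:!*incl} applied to $\D\cM$.

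For $G$-equivariance, I would simply observe that by Theorem \ref{thm:!*Gequiv} the defining short exact sequence
\[0\to \cM(!D^{(r)}_F)\to \cM(*D^{(r)}_F)\to \Psi_F(\cM)\to 0\]
is an exact sequence of $G$-equivariant sheaves with $G$-equivariant maps, so the quotient inherits a canonical $G$-structure. Relative holonomicity follows from Theorem \ref{thm:relholmaxexcc} applied to $\cM$ (for $\cM(*D^{(r)}_F)$) and to $\D\cM$ (which, together with Proposition \ref{prop:j_*ncm} and biduality for $n$-Cohen--Macaulay modules, shows that $\cM(!D^{(r)}_F)$ is relative holonomic), combined with the fact that the category of relative holonomic $\wt\shD_{X,R}$-modules is abelian.

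For the Cohen--Macaulay assertion, the plan is to apply $\Rhom_{\wt\shD_{X,R}}(-,\wt\shD_{X,R})$ to the short exact sequence above. By Proposition \ref{prop:j_*ncm} both endpoint modules are $n$-Cohen--Macaulay (for $\cM(!D^{(r)}_F)$ one uses the proposition applied to $\D\cM$ together with biduality). The resulting long exact sequence of Ext-sheaves immediately gives
\[\Ext^k_{\wt\shD_{X,R}}(\Psi_F(\cM),\wt\shD_{X,R})=0 \quad \textup{for every } k\notin\{n,n+1\},\]
and isolates a single four-term exact sequence
\[0\to \Ext^n(\Psi_F(\cM))\to \Ext^n(\cM(*D^{(r)}_F))\to \Ext^n(\cM(!D^{(r)}_F))\to \Ext^{n+1}(\Psi_F(\cM))\to 0.\]

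The main substantive step, which I expect to be the only real obstacle, is to identify the middle arrow. Using $\cM(!D^{(r)}_F)=\D\big((\D\cM)(*D^{(r)}_F)\big)$ together with biduality of $n$-Cohen--Macaulay modules, after the usual twist by $\omega_X^{-1}$ the two middle $\Ext^n$ terms are identified with $(\D\cM)(!D^{(r)}_F)$ and $(\D\cM)(*D^{(r)}_F)$ respectively, and by functoriality of $\D$ the middle arrow becomes the natural inclusion
\[(\D\cM)(!D^{(r)}_F)\hookrightarrow (\D\cM)(*D^{(r)}_F)\]
of Lemma \ref{lm:!*incl} applied to $\D\cM$. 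Granting this identification, injectivity of that inclusion immediately yields $\Ext^n(\Psi_F(\cM))=0$. Finally, since $\Psi_F(\cM)\neq 0$, some stalk is non-zero; by Lemma \ref{lm:angrnchreq} this stalk has a finite graded number, so at least one Ext-sheaf must be non-zero, and the only remaining candidate is $\Ext^{n+1}$. Hence $\Psi_F(\cM)$ is $(n+1)$-Cohen--Macaulay.
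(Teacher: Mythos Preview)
Your proof is correct and the first two assertions are handled exactly as in the paper. For the Cohen--Macaulay statement, however, you take a genuinely different route from the paper.

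The paper argues as follows: from Theorem \ref{thm:relholmaxexcc} and \eqref{eq:locj!st} one has $\CC^\rel(\cM(!D^{(r)}_F))=\CC^\rel(\cM(*D^{(r)}_F))$, so by additivity of characteristic cycles the quotient $\Psi_F(\cM)$ satisfies $\dim_\C\Ch^\rel(\Psi_F(\cM))\le n+r-1$. Lemma \ref{lm:angrnchreq} then gives $\Ext^k(\Psi_F(\cM))=0$ for $k\le n$ directly, and the long exact sequence (with both endpoints $n$-Cohen--Macaulay) handles $k\ge n+2$.

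Your approach instead runs the long exact sequence for all $k$ and then, to kill $\Ext^n(\Psi_F(\cM))$, identifies the map $\Ext^n(\cM(*D^{(r)}_F))\to\Ext^n(\cM(!D^{(r)}_F))$ with the natural inclusion $(\D\cM)(!D^{(r)}_F)\hookrightarrow(\D\cM)(*D^{(r)}_F)$ of Lemma \ref{lm:!*incl} applied to $\D\cM$. This identification is correct (both maps restrict to the identity on $(X\setminus D)\times\C^r$, and $(\D\cM)(*D^{(r)}_F)$ has no submodules supported on $D\times\C^r$), but it is a step the paper avoids entirely by using the characteristic-cycle equality. Your argument is more purely homological and avoids invoking the relative $\CC$ computation; the paper's is shorter once that computation is in hand.

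One small remark: your final paragraph is unnecessary. The definition of $(n+1)$-Cohen--Macaulay in the paper only asks that $\Psi_F(\cM)\neq 0$ and that $\Ext^k$ vanish for $k\neq n+1$; non-vanishing of $\Ext^{n+1}$ need not be checked separately.
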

\begin{proof}
As a quotient of $\cM(*D_F^{(r)})$, $\Psi_F(\cM)$ is relative holonomic. Since the category of $G$-equivariant sheaves is abelian and the inclusion 
\[\cM(!D^{(r)}_F)\hookrightarrow\cM(*D^{(r)}_F)\]
is $G$-equivariant by Theorem \ref{thm:!*Gequiv}, $\Psi_F(\cM)$ is $G$-equivariant.

To prove Cohen-Macaulayness, we consider the short exact sequence
\[0\to \cM(!D^{(r)}_F)\rightarrow\cM(*D^{(r)}_F)\rightarrow \Psi_F(\cM)\to 0.\]
By Theorem \ref{thm:relholmaxexcc} and \eqref{eq:locj!st}, we know 
\[\CC^\rel(\cM(!D^{(r)}_F))=\CC^\rel(\cM(*D^{(r)}_F)).\]
If $\Psi_F(\cM)\not=0$, by counting multiplicity we have \[\dim_\C\Ch^\rel(\Psi_F(\cM))\le n+r-1.\]
By Lemma \ref{lm:angrnchreq}, 
\[\Ext^k_{\wt\shD_{X,R}}(\Psi_F(\cM))=0 \textup{ for $k\le n$}.\]
Taking the dual of the short exact sequence, since both $\cM(!D^{(r)}_F)$ and $\cM(*D^{(r)}_F)$ are $n$-Cohen-Macaulay, $\Psi_F(\cM)$ is $(n+1)$-Cohen-Macaulay.
\end{proof}

\begin{remark}\label{rmk:reldsteqdb}
(1) By Theorem \ref{thm:!*Gequiv} and Theorem \ref{thm:gnbGeq}, $\cM(*D_F^{(r)})$, $\cM(!D_F^{(r)})$ and $\Psi_F(\cM)$ are $G$-equivariant sheaves of abelian groups. Since  the isomorphisms in \eqref{eq:geqvassab} are not $\sO_{X\times\C^r}$-linear (as \eqref{eq:subtrop} is not $\C[\bs]$-linear), $\cM(*D_F^{(r)})$, $\cM(!D_F^{(r)})$ and $\Psi_F(\cM)$ are not $G$-equivariant $\sO_{X\times\C^r}$-modules or $\wt\shD_{X,R}$-modules. However, since $\pi$ is covering, the sheaf morphism
\[\pi^{-1}(\shD_{X\times(\C^*)^r/(\C^*)^r})\rightarrow \wt\shD_{X,R}\]
is an isomorphism locally and hence also a global isomorphism. In particular, $\wt\shD_{X,R}$ is G-equivariant (see \cite[Part I.0.3.Lemma]{BL}) and its $G$-equivariance is induced from the deck transformation of the universal covering of $(\C^*)^r$. Equivalently, the $G$-equivariance of $\wt\shD_{X,R}$ is induced from the $G$-action on $\C[\bs]$ given by \eqref{eq:gactiononR}. Moreover, the $G$-equivariance of $\cM(*D_F^{(r)})$, $\cM(!D_F^{(r)})$ and $\Psi_F(\cM)$ is also induced from the $G$-action on $\C[\bs]$. Then, for $g\in G$, $Q\in \pi_*(\wt\shD_{X,R})$  and $m\in \pi_*(\cM(*D_F^{(r)}))$ (similarly for $\cM(!D_F^{(r)})$ and $\Psi_F(\cM)$), we have 
\[g\cdot (Q\cdot m)=(g\cdot Q)\cdot (g\cdot m).\]
Therefore, the $G$-invariant parts $\pi^G_*(\cM(*D_F^{(r)}))$ $\pi_*^G(\cM(!D_F^{(r)}))$ and $\pi_*^G(\Psi_F(\cM))$ are $\pi^G_*(\wt\shD_{X,R})\simeq \shD_{X\times(\C^*)^r/(\C^*)^r}$-modules. Since $\pi$ is covering, they are automatically relative holonomic $\shD_{X\times(\C^*)^r/(\C^*)^r}$-modules.

(2) Since
\be\label{eq:dualminmax}
\D(\cM(!D^{(r)}_F))\simeq (\D\cM)(*D^{(r)}_F) \textup{ and } \D(\cM(*D^{(r)}_F))\simeq (\D\cM)(!D^{(r)}_F),
\ee
$\D(\cM(!D^{(r)}_F))$ (resp. $\D(\cM(*D^{(r)}_F))$) is also $G$-equivariant. Since $\pi^{-1}\circ \pi^G_*\simeq \id$ thanks to \cite[Part I.0.3.Lemma]{BL} again, we have 
\be\label{eq:dualequivdirct}
\pi_*^G \D\big(\cM(!D^{(r)}_F)\big)\simeq \D \pi_*^G\big(\cM(!D^{(r)}_F)\big)
\ee 
and
\be\label{eq:dualequivdirct1}
 \pi_*^G \D\big(\cM(*D^{(r)}_F)\big)\simeq \D \pi_*^G\big(\cM(*D^{(r)}_F)\big).
\ee 

\end{remark}

Theorem \ref{thm:gnbGeq} and \eqref{eq:dualminmax} together imply:
\begin{coro}\label{cor:dualalcop}
For $\cM$ a holonomic $\shD_X$-module, we have a quasi-isomorphism 
\[\D(\Psi_F(\cM))\stackrel{q.i.}{\simeq}\Psi_F(\D\cM)[-1].\]
\end{coro}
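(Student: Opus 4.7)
The plan is to apply the relative duality functor $\D$ to the defining short exact sequence
\[
0\to \cM(!D^{(r)}_F)\to \cM(*D^{(r)}_F)\to \Psi_F(\cM)\to 0
\]
and extract information from the resulting long exact sequence together with the Cohen--Macaulay degree data supplied by earlier results. The case $\Psi_F(\cM)=0$ is trivial, so assume $\Psi_F(\cM)\neq 0$.

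First I would apply $\D$ to the short exact sequence to obtain a distinguished triangle in $D^b(\wt\shD_{X,R})$:
\[
\D(\Psi_F(\cM)) \longrightarrow \D(\cM(*D^{(r)}_F)) \longrightarrow \D(\cM(!D^{(r)}_F)) \xrightarrow{+1}.
\]
By \eqref{eq:dualminmax}, the middle and right terms are identified with $(\D\cM)(!D^{(r)}_F)$ and $(\D\cM)(*D^{(r)}_F)$ respectively. Both are $n$-Cohen--Macaulay by Proposition \ref{prop:j_*ncm} (applied to $\D\cM$, which is itself holonomic), hence concentrated in cohomological degree $0$ after the $[n]$ shift built into the duality functor.

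Next, Theorem \ref{thm:gnbGeq} shows that $\Psi_F(\cM)$ is $(n+1)$-Cohen--Macaulay, so $\D(\Psi_F(\cM))$ is concentrated in cohomological degree $1$; equivalently, $\D(\Psi_F(\cM))=\cH^{1}\bigl(\D(\Psi_F(\cM))\bigr)[-1]$. Taking the long exact sequence of cohomology sheaves of the triangle above and using these concentration facts gives
\[
0\longrightarrow \cH^{0}(\D(\Psi_F(\cM)))\longrightarrow (\D\cM)(!D^{(r)}_F)\xrightarrow{\ \iota\ } (\D\cM)(*D^{(r)}_F)\longrightarrow \cH^{1}(\D(\Psi_F(\cM)))\longrightarrow 0,
\]
where $\iota$ is the natural morphism appearing in Lemma \ref{lm:!*incl} applied to $\D\cM$. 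That lemma tells us $\iota$ is injective, so $\cH^{0}(\D(\Psi_F(\cM)))=0$ and $\cH^{1}(\D(\Psi_F(\cM)))=\mathrm{coker}(\iota)=\Psi_F(\D\cM)$ by the definition of $\Psi_F$. Combining these gives the desired quasi-isomorphism $\D(\Psi_F(\cM))\simeq \Psi_F(\D\cM)[-1]$.

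The only step requiring slight care is the identification of $\mathrm{coker}(\iota)$ with $\Psi_F(\D\cM)$, which is direct from the definition of $\Psi_F$ once the identifications from \eqref{eq:dualminmax} are in place; the main conceptual input is simply the combination of the Cohen--Macaulay degree jump (from $n$ on the extensions to $n+1$ on the quotient) established in Theorem \ref{thm:gnbGeq} with the injectivity statement of Lemma \ref{lm:!*incl}. No obstruction beyond bookkeeping is expected.
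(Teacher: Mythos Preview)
Your proposal is correct and follows the same approach the paper has in mind: the corollary is stated immediately after ``Theorem \ref{thm:gnbGeq} and \eqref{eq:dualminmax} together imply,'' and your argument is precisely the unpacking of that sentence via the dualized triangle. One small redundancy: once you have invoked Theorem \ref{thm:gnbGeq} to conclude that $\D(\Psi_F(\cM))$ is concentrated in degree $1$, the vanishing of $\cH^0(\D(\Psi_F(\cM)))$ is already in hand, so the separate appeal to Lemma \ref{lm:!*incl} for the injectivity of $\iota$ is not strictly needed (though it does no harm).
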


Now, we discuss relative intermediate extensions and use them to study local cohomology sheaves for $\cM(!D_F^{(r)})$.
For every subset $I\subseteq \{1,2,\dots,r\}$, we denote
\[D_I=\bigcap_{i\in I} D_i\textup{ and } D^I=\bigcup_{i\in I} D_i\]
and 
\[\wt D_I=D_I\times \C^r\textup{ and } \wt D^I=D^I\times \C^r.\]
Then we define 
\[\cM(!D_F^{(r)})(*\wt D^I)\coloneqq \lim_{k\to +\infty} \cM(!D_F^{(r)})\otimes_\sO \sO_{X\times\C^r}(k\wt D^I).\]
If $J\subseteq I$, then similar to Theorem \ref{thm:!*Gequiv} we have a $G$-equivariant inclusion
\be\label{eq:eqvintex1}
\cM(!D_F^{(r)})(*\wt D^J)\hookrightarrow \cM(!D_F^{(r)})(*\wt D^I).
\ee
In fact, the inclusion in Theorem \ref{thm:!*Gequiv} factor through \eqref{eq:eqvintex1} when $J$ is empty. 
Similar to Proposition \ref{prop:j_*ncm} (see also \cite[\S 5.3]{BVWZ2} for the algebraic case), $$\cM(!D_F^{(r)})(*\wt D^I)$$
 is $n$-Cohen-Macaulay. We thus define
 \[\cM(*D_F^{(r)})(!\wt D^I)\coloneqq \D\big((\D\cM)(!D_F^{(r)})(*\wt D^I) \big).\]
 If $J\subseteq I$, then similar to \eqref{eq:eqvintex1} we have a $G$-equivariant inclusion
\be\label{eq:eqvintex2}
\cM(*D_F^{(r)})(!\wt D^I)\hookrightarrow \cM(*D_F^{(r)})(!\wt D^J).
\ee
Furthermore, similar to \eqref{eq:dualequivdirct} in Remark \ref{rmk:reldsteqdb} (2), we have 
\be\label{eq:dualeqvdrig}
\pi_*^G\D\big(\cM(!D_F^{(r)})(*\wt D^J)\big)\simeq \D\pi_*^G\big(\cM(!D_F^{(r)})(*\wt D^J)\big).
\ee
By \eqref{eq:eqvintex1} and \eqref{eq:eqvintex2}, for every subset $I$, $\cM(!D_F^{(r)})(*\wt D^I)$ and $\cM(*D_F^{(r)})(!\wt D^I)$ are \emph{relative intermediate extensions} lying in between $\cM(!D_F^{(r)})$ and $\cM(*D_F^{(r)})$. 
\begin{theorem}\label{thm:loccohsheq}
Let $\cM$ be a holonomic $\shD_X$-module. Then for every $I\subseteq \{1,2,\dots,r\}$, the complexes $R\Gamma_{[\wt D_I]}\cM(!D_F^{(r)})$ and $\D R\Gamma_{[\wt D_I]}\big((\D\cM)(!D_F^{(r)})\big)$ are $G$-equivariant. \\Moreover, 
$$\pi_*^GR\Gamma_{[\wt D_I]}\big(\cM(!D_F^{(r)})\big)\simeq R\Gamma_{[D_I\times (\C^*)^r]}\pi_*^G\big(\cM(!D_F^{(r)})\big)$$
and 
$$\pi_*^G\D R\Gamma_{[\wt D_I]}\big((\D\cM)(!D_F^{(r)})\big)\simeq \D R\Gamma_{[D_I\times (\C^*)^r]}\pi_*^G\big((\D\cM)(!D_F^{(r)})\big)$$
are complexes of relative holonomic $\shD_{X\times(\C^*)^r/(\C^*)^r}$-modules.
\end{theorem}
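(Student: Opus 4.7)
The plan is to express the local cohomology via a \v{C}ech/Koszul-type complex built from the intermediate extensions already introduced, and then transport the structure through $\pi^G_*$ using the covering property. Since $D_I=\bigcap_{i\in I}D_i$, iterating the fundamental triangle $R\Gamma_{[\wt D_i]}(\cN)\to \cN\to \cN(*\wt D_i)$ for $\cN=\cM(!D^{(r)}_F)$ (respectively $(\D\cM)(!D^{(r)}_F)$) identifies, via the octahedral axiom,
\[
R\Gamma_{[\wt D_I]}\bigl(\cM(!D^{(r)}_F)\bigr)\simeq \mathrm{Tot}\Bigl[\cM(!D^{(r)}_F)\to\!\!\bigoplus_{\substack{J\subseteq I\\ |J|=1}}\!\!\cM(!D^{(r)}_F)(*\wt D^{J})\to\cdots\to \cM(!D^{(r)}_F)(*\wt D^{I})\Bigr],
\]
placed in degrees $0,1,\dots,|I|$, with differentials the signed sums of the natural inclusions \eqref{eq:eqvintex1}. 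This is the first key step.

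Next I verify $G$-equivariance of the entire complex. By construction (\S\ref{subsec:gluemmex} and the analogue of Theorem~\ref{thm:!*Gequiv} applied to each $\cM(!D^{(r)}_F)(*\wt D^J)$), every term in the \v{C}ech-type complex is $G$-equivariant; the differentials are the equivariant inclusions \eqref{eq:eqvintex1}, so the total complex is $G$-equivariant as a complex of sheaves of abelian groups. Taking $\pi_*^G$ termwise and using that $\pi$ is a $G$-covering (so $\pi_*^G$ is exact on $G$-equivariant sheaves and $\pi^{-1}\circ\pi_*^G\simeq\id$), we obtain
\[
\pi_*^G R\Gamma_{[\wt D_I]}\bigl(\cM(!D^{(r)}_F)\bigr)\simeq \mathrm{Tot}\Bigl[\pi_*^G\cM(!D^{(r)}_F)\to\bigoplus_{|J|=1}\pi_*^G\cM(!D^{(r)}_F)(*\wt D^{J})\to\cdots\Bigr].
\]
Because $\wt D^J=\pi^{-1}(D^J\times(\C^*)^r)$ and $\pi$ is a local isomorphism, each $\pi_*^G\cM(!D^{(r)}_F)(*\wt D^J)$ identifies naturally with the algebraic localization of $\pi_*^G\cM(!D^{(r)}_F)$ along $D^J\times(\C^*)^r$. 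Applying the same \v{C}ech-type identification in reverse, now on $X\times(\C^*)^r$ for the support $D_I\times(\C^*)^r$, yields the first quasi-isomorphism in the statement.

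For the dual version, I combine the above with Remark~\ref{rmk:reldsteqdb}(2): the dualizing functor $\D$ preserves $G$-equivariance, and \eqref{eq:dualeqvdrig} gives $\pi_*^G\D\simeq \D\pi_*^G$. Applying $\D$ to the \v{C}ech-type complex computing $R\Gamma_{[\wt D_I]}\bigl((\D\cM)(!D^{(r)}_F)\bigr)$ produces a $G$-equivariant complex whose $\pi_*^G$ coincides with $\D R\Gamma_{[D_I\times(\C^*)^r]}\pi_*^G\bigl((\D\cM)(!D^{(r)}_F)\bigr)$ by the same descent argument.

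It remains to check relative holonomicity. Every term of the \v{C}ech-type complex is relative holonomic over $\C^r$: this follows from Theorem~\ref{thm:relholmaxexcc} applied to the intermediate extensions, since each $\cM(!D^{(r)}_F)(*\wt D^J)$ is built by the same localization procedure. Hence all cohomologies of $R\Gamma_{[\wt D_I]}\bigl(\cM(!D^{(r)}_F)\bigr)$ and of its dual are relative holonomic; since $\pi$ is a covering, relative holonomicity descends along $\pi_*^G$ to give complexes of relative holonomic $\shD_{X\times(\C^*)^r/(\C^*)^r}$-modules. The main obstacle I anticipate is verifying rigorously that the \v{C}ech-type resolution, along with its differentials, is $G$-equivariant as constructed from \eqref{eq:eqvintex1}, rather than merely termwise equivariant; once this compatibility is checked the remaining steps reduce to formal consequences of the fact that $\pi$ is a covering and of \eqref{eq:dualeqvdrig}.
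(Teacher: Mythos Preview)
Your proposal is correct and follows essentially the same approach as the paper: represent $R\Gamma_{[\wt D_I]}$ by the Koszul/\v{C}ech-type complex built from the intermediate extensions $\cM(!D_F^{(r)})(*\wt D^J)$, deduce $G$-equivariance from the equivariance of the inclusions \eqref{eq:eqvintex1} (and \eqref{eq:eqvintex2} for the dual), and then descend along $\pi_*^G$ using the covering property together with \eqref{eq:dualeqvdrig}. You spell out the descent of relative holonomicity and the identification $\pi_*^G\bigl(\cM(!D_F^{(r)})(*\wt D^J)\bigr)\simeq \pi_*^G\bigl(\cM(!D_F^{(r)})\bigr)(*(D^J\times(\C^*)^r))$ more explicitly than the paper does, but there is no substantive difference in strategy.
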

\begin{proof}
By the definition of $R\Gamma_{[\wt D_I]}$, $R\Gamma_{[\wt D_I]}\cM(!D_F^{(r)})$ is represented by the Koszul-type complex 
\[
\begin{aligned}
0\to\cM(!D_F^{(r)}) \rightarrow \cdots &\rightarrow \bigoplus _{J\subseteq I, |J|=i}\cM(!D_F^{(r)})(*\wt D^J)\rightarrow \\
&\rightarrow\bigoplus _{J\subseteq I, |J|=i+1}\cM(!D_F^{(r)})(*\wt D^J) \rightarrow \cdots \cM(!D_F^{(r)})(*\wt D^I)\to 0.
\end{aligned}
\]
with cohomogy degrees $0,1,2, \dots |I|$,
and taking duality of the above complex for $\D\cM$, $\D R\Gamma_{[\wt D_I]}\big((\D\cM)(!D_F^{(r)})\big)$ is the complex
\[
\begin{aligned}
0\to\cM(*D_F^{(r)})(!\wt D^I) \rightarrow \cdots &\rightarrow \bigoplus _{J\subseteq I, |J|=i}\cM(*D_F^{(r)})(!\wt D^J)\rightarrow \\
&\rightarrow\bigoplus _{J\subseteq I, |J|=i-1}\cM(*D_F^{(r)})(!\wt D^J) \rightarrow \cdots \cM(*D_F^{(r)})\to 0.
\end{aligned}
\]
with cohomogy degrees $-|I|,-|I|+1,\dots, -1,0$. By using \eqref{eq:eqvintex1} and \eqref{eq:eqvintex2}, the two complexes as above are $G$-equivariant. The second required claim follows from Remark \ref{rmk:reldsteqdb} and \eqref{eq:dualeqvdrig}.
\end{proof}

\subsection{$G$-equivariance of relative de Rham complex}\label{subsec:deRhamequiv}
Let $\sE$ be $G$-equivariant sheaf and a $\shD_{X\times\C^r/\C^r}$-module (see Remark \ref{rmk:reldsteqdb}).  For a differential operator $P\in \shD_X$, since the $G$-action on $\sE$ is induced from the $G$-action on $\C^r$, we have 
\[g^*(P\cdot \sE)\simeq P\cdot(g^*\sE),\]
which means that the $P$-action on $\sE$ is $G$-equivariant. 

Now we fix a local coordinates $(x_1,\dots,x_n)$ of $X$ and denote by $\partial_{x_i}$ the vector field of $x_i$. Then locally the relative de Rham complex of $\sE$ satisfying 
\be\label{eq:relderhamkos}
\DR_{X\times\C^r/\C^r}(\sE)\simeq \Kos(\sE; \partial_{x_1},\dots,\partial_{x_r}),
\ee
where $\Kos(\sE; \partial_{x_1},\dots,\partial_{x_r})$ is the Koszul complex of the $\partial_{x_i}$-actions on $\sE$. Since $G$ acts on the first factor of $X\times \C^r$ trivially, $\partial_{x_i}$-actions on $\sE$ are $G$-equivariant (see \cite[Part I.0.2.]{BL} for the definition of equivariant morphisms). 
We thus obtain:
\begin{lemma}\label{lm:DReq}
Let $\sE$ be a sheaf (or complexes) of $\shD_{X\times\C^r/\C^r})$-modules and $G$-equivariant. Then the de Rham complex $\DR_{X\times\C^r/\C^r}(\sE)$ is $G$-equivariant and hence we have a natural isomorphism
\[\pi_*^G\big(\DR_{X\times\C^r/\C^r}(\sE)\big)\simeq \DR_{X\times(\C^*)^r/(\C^*)^r}(\pi_*^G\sE).\]
\end{lemma}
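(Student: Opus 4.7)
The plan is to first establish $G$-equivariance of the relative de Rham complex locally via the Koszul description \eqref{eq:relderhamkos}, globalize by naturality, and then deduce the displayed isomorphism from the covering-space properties of $\pi$.

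First, I would fix local coordinates $(x_1,\dots,x_n)$ on an open $U\subseteq X$ so that \eqref{eq:relderhamkos} identifies $\DR_{X\times\C^r/\C^r}(\sE)|_{U\times\C^r}$ with the Koszul complex $\Kos(\sE|_{U\times\C^r};\partial_{x_1},\dots,\partial_{x_n})$. Because $G$ acts only on the second factor $\C^r$ and each $\partial_{x_i}$ lies in $\shD_X\subseteq\shD_{X\times\C^r/\C^r}$, the observation preceding the lemma shows that each $\partial_{x_i}$-action on $\sE$ is $G$-equivariant. Hence every differential in the Koszul complex is $G$-equivariant, and the complex $\DR_{X\times\C^r/\C^r}(\sE)|_{U\times\C^r}$ inherits a $G$-equivariant structure in the sense of \cite[Part I.0.2]{BL}.

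Second, I would check that these locally defined equivariant structures glue to a global one. The cleanest way is to avoid coordinates altogether: for $g\in G$ the natural isomorphism $g^{-1}\DR_{X\times\C^r/\C^r}(\sE)\simeq \DR_{X\times\C^r/\C^r}(g^{-1}\sE)$ is induced by functoriality of $\DR_{X\times\C^r/\C^r}$ under the $X$-fibered automorphism $g\colon X\times\C^r\to X\times\C^r$ (using that the $G$-equivariant structure on $\shD_{X\times\C^r/\C^r}$ is the one from Remark \ref{rmk:reldsteqdb}(1)); composing with the $G$-equivariance isomorphism of $\sE$ yields the required isomorphism. The cocycle condition for $\DR_{X\times\C^r/\C^r}(\sE)$ then follows from that of $\sE$ by naturality. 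In local coordinates this recovers the Koszul construction of the previous paragraph.

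Finally, for the isomorphism, I would use that $\pi\colon X\times\C^r\to X\times(\C^*)^r$ is a covering with deck group $G$, so that $\pi^{-1}\circ\pi_*^G\simeq\id$ on $G$-equivariant sheaves \cite[Part I.0.3]{BL}, together with the identification $\pi^{-1}\shD_{X\times(\C^*)^r/(\C^*)^r}\simeq\shD_{X\times\C^r/\C^r}$ noted in Remark \ref{rmk:reldsteqdb}(1). Applying $\pi_*^G$ to the $G$-equivariant Koszul complex $\Kos(\sE;\partial_{x_1},\dots,\partial_{x_n})$ commutes with the formation of differentials (since the $\partial_{x_i}$ descend to $X\times(\C^*)^r$) and yields $\Kos(\pi_*^G\sE;\partial_{x_1},\dots,\partial_{x_n})$, which locally represents $\DR_{X\times(\C^*)^r/(\C^*)^r}(\pi_*^G\sE)$. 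Naturality of the Koszul construction makes this identification independent of the local coordinates and hence global. The one point requiring a moment of care is the compatibility of the $G$-equivariant structures on $\sE$ and on $\shD_{X\times\C^r/\C^r}$ under $\pi_*^G$, which is precisely the content of the displayed formula $g\cdot(Q\cdot m)=(g\cdot Q)\cdot(g\cdot m)$ in Remark \ref{rmk:reldsteqdb}(1); this is the main (and essentially only) check, and the rest of the argument is formal.
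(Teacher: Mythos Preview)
Your proposal is correct and follows the same approach as the paper: the paper states the lemma as an immediate consequence (``We thus obtain:'') of the preceding paragraph, which records precisely the two ingredients you use---the local Koszul description \eqref{eq:relderhamkos} of $\DR_{X\times\C^r/\C^r}$ and the $G$-equivariance of the $\partial_{x_i}$-actions coming from $\shD_X$. Your added care in checking that the local equivariant structures glue and in deducing the displayed isomorphism via $\pi^{-1}\circ\pi_*^G\simeq\id$ simply fills in details the paper leaves implicit.
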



\subsection{Bernstein-Sato ideal along $F$}\label{subsec:BSideal}

For every $\ba=(a_1,\dots,a_r)\in\Z^r_{\ge 0}$, the Bernstein-Sato ideal of $\sN_0$ and $\ba$ along $F$ is the ideal
\[B_F(\sN_0,\ba)=B(\frac{\sN_0}{g_\ba\cdot\sN_0})\subseteq \C[\bs].\]
For simplicity, we write $B_F(\sN_0)=B_F(\sN_0,{\mathbf1}_r)$ with ${\mathbf 1}_r=(1,1,\dots,1)\in \Z^r$.
When $\cM_0=\sO_X$, we further denote $B_F(\sN_0)$ by $B_F$ for short. Notice that $B_F$ is the major Bernstein-Sato ideal studied in the literature; see for instance \cite{Budur,BVWZ, Maihyp}.

Bernstein-Sato ideals along $F$ are natural generalization of the Bernstein-Sato polynomial for a single holomorphic function (see \cite{KasBf} and \cite[\S 4.1]{Budur}). 
By Theorem \ref{thm:bssab} (replacing $\sN_{-1}$ by $g_\ba\cdot\sN_0$), we have that, if $\cM$ is holonomic, then locally on a relatively compact open subset $W\subseteq X$
\[\textup{$B_F(\sN_0,\ba)\not=0$ for all $\ba\in\Z^r_{\ge 0}$ and $\ba\not=\mathbf 0$.}\]

The following theorem is essentially due to Maisonobe \cite{Mai}; here we present it in a more generalized form.\footnote{In \emph{loc. cit.} the author only considered the cyclic modules $\shD_X[\bs]m\cdot F^\bs$ for sections $m$ of $\cM$ (instead of $\sN_0$).}
See also \cite{BVWZ} for $\cM_0=\sO_X$ under the algebraic setting.

\begin{theorem}
Let $\cM$ be a regular holonomic $\shD_X$-module. Locally on a relatively compact open subset $W\subseteq X$, if $\sN_0/\sN_{-1}$ is not zero on $W$ $(\Leftrightarrow W\cap D\not=\emptyset)$, then 
$Z(B_F(\sN_0))\subseteq \C^r$ is of codimension-one $($but not purely in general $)$. 
\end{theorem}
\begin{proof}
If $W\cap D=\emptyset$, then obviously $\sN_0=\sN_{-1}$. We now assume $W\cap D\not=0$ and then prove $\sN_0/\sN_{-1}\not=0$ on $W$. 

To this purpose, we need to treat $\sN_k$ as a logarithmic $\shD$-module. Using the graph embedding (see \cite[\S 5]{Wuch}), the inclusion
\[\iota_{F,*}(\sN_0)\hookrightarrow \iota_{F,*}(\sM)\simeq \iota_{F,+}(\cM(*D))\]
makes $\iota_{F,*}(\sN_0)$ a lattice of the regular holonomic $\shD_Y$-module $\iota_{F,+}(\cM(*D))$, where 
\[\iota_{F}\colon X\hookrightarrow Y=X\times \C^r_{\mathbf u}, \quad x\mapsto (x,f_1(x),\dots,f_r(x))\]
is the graph embedding with ${\mathbf u}=(u_1,\dots,u_r)$ the coordinates
and $\iota_{F,+}$ denotes the $\shD$-module pushforward functor of $\iota_F$.

Since $\iota_F$ is a closed embedding, we have
\[\frac{\sN_0}{\sN_{-1}}\not=0 \Leftrightarrow \frac{\iota_{F,*}(\sN_0)}{\iota_{F,*}(\sN_{-1})}\not=0.\]
By the construction of $\iota_{F,+}$, we have 
\[\prod_{i=1}^r u_i\cdot\iota_{F,*}(\sN_0)=\iota_{F,*}(\sN_{-1}).\]
Therefore, we have a short exact sequence 
\[0\to \iota_{F,*}(\sN_0)\xrightarrow{\prod_{i=1}^r u_i}\iota_{F,*}(\sN_0)\longrightarrow \frac{\iota_{F,*}(\sN_0)}{\iota_{F,*}(\sN_{-1})}\to0.\]
For simplicity, we write $$\sG=\iota_{F,*}(\sN_0),\quad \sH=\frac{\iota_{F,*}(\sN_0)}{\iota_{F,*}(\sN_{-1})} \quad \textup{ and } \quad v=\prod_{i=1}^r u_i.$$
Since $\sG$ is a lattice along the normal crossing divisor $D_Y=(\prod_{i=1}^r u_i=0)$, by \cite[Theorem 1.6]{Wuch} the logarithmic characteristic variety satisfies
\[\Ch^{\log}(\sG))=\overline{\wt\iota_{F}(\Ch(\cM|_{X\setminus D}))},\]
where $\wt\iota_{F}$ is the induced embedding
\[\wt\iota_{F}\colon T^*(X\setminus D)\hookrightarrow T^*(Y\setminus D_Y)\simeq T^*(X\setminus D)\times T^*\C^r_{\mathbf u}, \quad (x,\xi)\mapsto (x,\xi,F(x),dF(x))\]
and the closure is taken inside the logarithmic cotangent bundle $T^*(Y,D_Y)$. In particular, $\Ch^{\log}(\sG)$ has no component over $(v=0)$.

We now take a good filtration $F_\bullet\sG$ over $F_\bullet\shD_{Y,D_Y}$ (cf. \cite[\S 3]{Wuch}). We thus have a filtered morphism 
\[F_\bullet\sG\xrightarrow{v} F_\bullet\sG\]
with the associated graded morphism
\[\varphi\colon\gr^{F}_\bullet\sG\xrightarrow{v} \gr^{F}_\bullet\sG.\]
Since the filtration is bounded from below, the filtered morphism gives a convergent spectral sequence. By convergence (see for instance \cite[Lemma 3.5.13 (iii)]{Laumon}), we have 
\[[\gr^F_\bullet\sH]=[\Coker\varphi]-[\ker\varphi]\]
in the Grothendieck group $K_0$ generated by $\gr^F_\bullet\shD_{Y,D_Y}$-modules with support of dimension $\le n+r-1$, where $F_\bullet\sH$ is the induced filtration on the quotient. But one can check in $K_0$
\[[\Coker\varphi]-[\ker\varphi]=[(\gr^F_\bullet\sG/\cT)|_{(v=0)}]\]
where $\cT\subseteq \gr^F_\bullet\sG$ is the $v$-torsion subsheaf. Since $\gr^F_\bullet\sG/\cT$ has no $v$-torsion, we have 
\[\supp_{\gr_\bullet^F\shD_{Y,D_Y}}((\gr^F_\bullet\sG/\cT)|_{(v=0)})=\supp_{\gr_\bullet^F\shD_{Y,D_Y}}(\gr^F_\bullet\sG/\cT)|_{(v=0)}.\]
Since 
\[\Ch^{\log}(\sG)=\supp_{\gr_\bullet^F\shD_{Y,D_Y}}\gr^F_\bullet\sG\]
and $\Ch^{\log}(\sG)$ does not have component over $(v=0)$, we have 
\[\supp_{\gr_\bullet^F\shD_{Y,D_Y}}\gr^F_\bullet\sG=\supp_{\gr_\bullet^F\shD_{Y,D_Y}}(\gr^F_\bullet\sG/\cT).\]
By \cite[Proposition 1.5.1]{Gil}, we conclude 
\[\Ch^{\log}_{n+r-1}(\sH)=\Ch^{\log}(\sG)|_{v=0}\]
where $\Ch^{\log}_{n+r-1}(\sH)$ is the purely $(n+r-1)$-dimensional part of the characteristic variety. But by construction,
\[\Ch^{\log}(\sH)\subseteq\Ch^{\log}(\sG)|_{v=0}\]
and hence 
\[\Ch^{\log}(\sH)=\Ch^{\log}(\sG)|_{v=0}.\]
In fact, the above argument proves more generally
 \[\CC^{\log}(\sH)=\CC^{\log}(\sG)|_{v=0}.\]
In particular, $\sH\not=0$ and hence so is $\sN_0/\sN_{-1}$.
Then $\sN_k/\sN_{-k}\not=0$ for all $k>0$.
Since for $k\gg0$ $\wt\sN_k/\wt\sN_{-k}$ is just $\Psi_F(\cM)$ analytically locally over $\C^r$, we now conclude that $\Psi_F(\cM)$ is $(n+1)$-Cohen-Macaulay by Theorem \ref{thm:gnbGeq}. By Lemma \ref{lm:angrnchreq}, this also implies 
\[\dim_\C(\Ch^\rel(\wt\sN_k/\wt\sN_{-k}))=n+r-1.\]
Thus, by \cite[Lemma 2.3]{Wuch}
\[\dim_\C(\Ch^\rel(\wt\sN_l/\wt\sN_{l-1}))=n+r-1\]
for some $-k<l\le k$. But by translation,
\[\sN_l/\sN_{l-1}\simeq \sN_0/\sN_{-1}\]
and hence 
\[\dim_\C(\Ch^\rel(\wt\sN_0/\wt\sN_{-1}))=n+r-1.\]
The proof is then accomplished by Lemma \ref{lm:altrelhol} and Proposition \ref{prop:zbfsupp=}. 
\end{proof}

The theorem above and Theorem \ref{thm:bssab} together implies:
\begin{coro}\label{cor:regholzbfnontr}
Let $\cM$ be a regular holonomic $\shD_X$-module. Locally on a relatively compact open subset $W\subseteq X$ intersecting $D$, $Z_{r-1}(B_F(\sN_0))$ is a finite union of translated hyperplanes of the form $(L\cdot\bs+\alpha=0)$
with $L\in\Z^r_{\ge0}$ and $\alpha\in\C$,
where $Z_{r-1}(B_F(\sN_0))$ denotes the pure $(r-1)$-dimensional part of $Z(B_F(\sN_0))$.
\end{coro}
The corollary above enables us to make the following definition:
\begin{definition}\label{def:slopkappa}
For $\cM$ a regular holonomic $\shD_X$-module, locally on a relatively compact open subset $W\subseteq X$, we define
\[S(F,\cM)=\{\textup{primitive }L\in \Z^r_{\ge 0}\mid (L\cdot \bs+\alpha=0)\subseteq Z_{r-1}(B_F(\sN_0)),\textup{ for some }\alpha\in\C\}\]
and that for $L\in S(F,\cM)$
\[\kappa(L)=\{\alpha\in \C\mid (L\cdot \bs+\alpha=0)\subseteq Z_{r-1}(B_F(\sN_0))\}.\]
We set 
\[\wt\kappa(L)=\kappa(L)/\sim_L\]
where $\sim_L$ denotes the equivalence 
\[\alpha_1\sim_L \alpha_2 \Longleftrightarrow \alpha_2=L\cdot \ba+\alpha_1 \textup{ for some } \ba\in\Z^r.\]
\end{definition}
Since $\sN_0$ is depending on $\cM_0$, $B_F(\sN_0)$ is depending on $\cM_0$. However, one can easily check that
\[\textup{$S(F,\cM)$ is independent of the choices of $\cM_0$.}\] See \cite[R\'esultat 6]{Mai} for a logarithmic interpretation of $S(F,\cM)$ and also \cite[Proposition 4.4.4]{BVWZ2}.

\begin{proof}[Proof of Theorem \ref{thm:relsd}]
For $p\in Z_{r-1}(B_F(\sN_0))$ and for every $g\in G$, by Proposition \ref{prop:zbfsupp=} we know \[(\frac{g\cdot\sN_0}{g\cdot\sN_{-1}})_{m_{g\cdot p}}\not=0,\]
where $m_{g\cdot p}$ is the maximal ideal of $p$. We take $k\gg 0$ such that $g\cdot\sN_0/g\cdot\sN_{-1}$ is a subquotient of $\sN_k/\sN_{-k}$. Then 
\[(\frac{\sN_k}{\sN_{-k}})_{m_{g\cdot p}}\not=0.\]
By Lemma \ref{lm:suppan}, we then have for some $x\in X$
\[(\frac{\wt\sN_k}{\wt\sN_{-k}})_{(x,g\cdot p)}\not= 0.\]
But by Proposition \ref{prop:anstablej_*} (1) and \eqref{eq:locj!st}, $\wt\sN_k/\wt\sN_{-k}$ is $\Psi_F(\cM)$ locally around $(x,g\cdot p)$. We thus have 
\[\bigcup_{g\in G}\bigcup_{L\in S(F,\cM)}\bigcup_{\alpha\in \kappa(L)}(g\cdot(L\cdot\bs+\alpha)=0)\subseteq\supp_{\C^r}\Psi_F(\cM).\]

Conversely, we assume $\Psi_F(\cM)_{(x,p)}\not=0$ for some $x\in X$ and for $p\in \C^r$ away from 
\[\bigcup_{g\in G}\bigcup_{L\in S(F,\cM)}\bigcup_{\alpha\in \kappa(L)}(g\cdot(L\cdot\bs+\alpha)=0).\]
Then for some $k\gg 0$
$$\Psi_F(\cM)|_{W\times V}=\frac{\wt\sN_k}{\wt\sN_{-k}}|_{W\times V}\not=0$$ with $W$ a small neighborhood of $x$ and $V$ a small neighborhood of $p$. Since $\Psi_F(\cM)$ is $(n+1)$-Cohen-Macaulay, by Lemma \ref{lm:angrnchreq} and Proposition \ref{prop:suppsubcmrh}, we know \[\supp_{\C^r}\Psi_F(\cM)|_{W\times V}\]
is purely of codimension one. But by Theorem \ref{thm:bssab} and Proposition \ref{prop:zbfsupp=}, we know 
\[\supp_{\C^r}\frac{\sN_k}{\sN_{-k}}\]
is contained in a union of translated linear hyperplanes in $\C^r$ of forms $(L\cdot\bs+\alpha=0)$ with $L\in \Z^r_{\ge 0}$. Therefore, we can assume $ (L\cdot\bs+\alpha=0)\cap V \ni p$ is a component of $\supp_{\C^r}\Psi_F(\cM)|_{W\times V}$. Then $(L\cdot\bs+\alpha=0)$ must be a component of $\supp_{\C^r}\sN_{l}/\sN_{l-1}$ for some $-k<l\le k$. But this means 
$$p\in \bigcup_{g\in G}\bigcup_{L\in S(F,\cM)}\bigcup_{\alpha\in \kappa(L)}(g\cdot(L\cdot\bs+\alpha)=0),$$
which is a contradiction.
\end{proof}

\subsection{Evaluating $\Psi_F(\cM)$}
In this subsection, we discuss ``evaluating" $\cM(*D_F^{(r)})$, $\cM(!D_F^{(r)})$ and $\Psi_F(\cM)$ at points in $\C^r$ and applying them to prove Theorem \ref{thm:relccgny}. We first introduce some notations.

For $\bal=(\alpha_1,\dots,\alpha_r)\in\C^r$ and for $i\in\{1,\dots,r\}$, we write 
\[\hat\bal_i=(\alpha_1,\dots,\alpha_{i-1},0,\alpha_{i+1},\dots,\alpha_r).\]
We also write by
\[i_{\bal}\colon \{\bal\}\hookrightarrow \C^r.\]
the closed embedding and for $r\ge 2$ set 
\[\delta_{\hat\bal_i}\colon \C\hookrightarrow \C^r,\quad x\mapsto (\alpha_1,\dots,\alpha_{i-1},x,\alpha_{i+1},\dots,\alpha_r).\]
For $\bal=(\alpha_1,\dots,\alpha_r)\in\C^r$, the multivalued holomorphic function $F^\bal$ determines a rank one local system $L_\blamb$ on $X\setminus D$, that is, the local system has local monodromy along each $D_i$ given by multiplication by $\lambda_i$ with 
\[\lambda_i=exp(-2\pi \sqrt{-1}\alpha_i).\]
By Riemann-Hilbert correspondence, $Rj_*L_\bal$ determines a regular holonomic $\shD_X$-module, denoted by $\cV_\bal$, where $j\colon X\setminus D\hookrightarrow X$ the open embedding.
We write 
\[\cM_\bal=\cM\otimes_\sO \cV_\bal.\]

\begin{prop}\label{prop:!*gnbpbtopoint}
Let $\cM$ be a holonomic $\shD_X$-module. With notations as above, for each $\bal\in\C^r$ and for each $i$, we have 
\[\bL\wt i_{\bal}^*(\cM(*D_F^{(r)}))\stackrel{q.i.}{\simeq}\cM_\bal(*D) \quad\textup{and}\quad  \bL \wt i_{\bal}^*(\cM(!D_F^{(r)}))\stackrel{q.i.}{\simeq}\cM_\bal(!D)\]
and for each $0<i\le r$
\[\bL\wt \delta_{\hat\bal_i}^*(\cM(*D_F^{(r)}))\stackrel{q.i.}{\simeq}\cM_{\hat\bal_i}(*D_{f_i}^{(1)})\quad\textup{and}\quad\bL\wt \delta_{\hat\bal_i}^*(\cM(!D_F^{(r)}))\stackrel{q.i.}{\simeq} \delta_{\hat\bal_i}^*(\cM(!D_F^{(r)})).\]
\end{prop}
\begin{proof}
By construction, $\sM$ is flat over $R\simeq\C[\bs]$. Then 
\[\bL\wt i_{\bal}^*(\cM(*D_F^{(r)}))\stackrel{q.i.}{\simeq}\wt i_{\bal}^*(\cM(*D_F^{(r)}))=\sM\otimes_R \C_\bal=\cM_\bal(*D),\]
where $\C_\bal$ is the residue field of $\bal\in\C^r$.
By using Lemma \ref{lm:basechangeanddual} and Cohen-Macaulayness, 
\[\bL\wt i_{\bal}^*(\cM(!D_F^{(r)}))\stackrel{q.i.}{\simeq}\wt i_{\bal}^*(\cM(!D_F^{(r)}))=\cM_\bal(!D)\]
follows similarly. 

Similarly, we get 
\[\bL\wt \delta_{\hat\bal_i}^*(\cM(*D_F^{(r)}))\stackrel{q.i.}{\simeq}\cM_{\hat\bal_i}(*D_{f_i}^{(1)})\quad\textup{and}\quad\bL\wt \delta_{\hat\bal_i}^*(\cM(!D_F^{(r)}))\stackrel{q.i.}{\simeq} \delta_{\hat\bal_i}^*(\cM(!D_F^{(r)})).\]
\end{proof}
Let us remark that by definition, $\delta_{\hat\bal_i}^*(\cM(!D_F^{(r)}))$ is the minimal extension of 
$$\delta_{\hat\bal_i}^*(\cM(*D_F^{(r)}))|_{(X\setminus D)\times \C}=\cM_{\hat\bal_i}(*D_{f_i}^{(1)})|_{(X\setminus D)\times \C}$$ along $D\times \C$ by construction, which in general is not necessarily $\cM_{\hat\bal_i}(!D_{f_i}^{(1)})$, the minimal extension of $$\cM_{\hat\bal_i}(*D_{f_i}^{(1)})|_{(X\setminus (f_i=0))\times \C}$$
along $(f_i=0)\times\C$. But we still have the following corollary:

\begin{coro}\label{coro:basechageSF}
Let $\cM$ be a holonomic $\shD_X$-module. Then for each $i$ and for very general $\bal\in (g\cdot (L\cdot \bs+\alpha)=0)$, an irreducible component of $\supp_{\C^r}(\Psi_F(\cM))$ , if $L\not=e_i$, then we have
\[\bL\wt \delta_{\hat\bal_i}^*\Psi_F(\cM)\stackrel{q.i.}{\simeq} \Psi_{f_i}(\cM_{\hat\bal_i}),\]
where $e_i$ is the $i$-th unit vector in $\Z_{\ge0}^r$.
\end{coro}
\begin{proof}
    Since $\bal\in (g\cdot (L\cdot \bs+\alpha)=0)$ is very general, the condition $L\not= e_i$ gives 
    \[\delta_{\hat\bal_i}^*(\cM(*D_F^{(r)}))|_{(X\setminus (f_i=0))\times \C}=\delta_{\hat\bal_i}^*(\cM(!D_F^{(r)}))|_{(X\setminus (f_i=0))\times \C}=\cM_{\hat\bal_i}(*D_{f_i}^{(1)})|_{(X\setminus (f_i=0))\times \C}.\]
    Thus, we have 
    \[\cM_{\hat\bal_i}(!D_{f_i}^{(1)})=\delta_{\hat\bal_i}^*(\cM(!D_F^{(r)})).\]
    By using Proposition \ref{prop:!*gnbpbtopoint}, the proof is done.
\end{proof}

\begin{proof}[Proof of Theorem \ref{thm:relccgny}]
The relative characteristic cycle formula for $\cM(*D_F^{(r)})$ has been obtained in Theorem \ref{thm:relholmaxexcc} even if $\cM$ is only holonomic. By \eqref{eq:locj!st}, we also get the relative characteristic cycle of $\cM(!D_F^{(r)})$.

We now prove the relative characteristic cycle formula for $\Psi_F(\cM)$. For $L\in S(F,\cM)$ and $\alpha\in\kappa(L)$, we fix a relatively compact open subset $V\subseteq \C^r$ intersecting $(L\cdot \bs+\alpha=0)$. By Theorem \ref{thm:relsd} and Proposition \ref{lm:altrelhol}, we can assume that 
\[\Lambda_{L,\alpha}\times (L\cdot \bs+\alpha=0)|_V\]
is the part of $\CC^\rel(\Psi_F(\cM))|_{W\times V}$ over $(L\cdot \bs+\alpha=0)$, where $\Lambda_{L,\alpha}$ is a conic Lagrangian cycle supported on $\Ch(\cM(*D))$ over $D\cap W$. Using the ``gluing" date of $\cM(*D_F^{(r)})$ and $\cM(!D_F^{(r)})$ in \S \ref{subsec:gluemmex}, we can move $\Lambda_{L,\alpha}\times (L\cdot \bs+\alpha=0)|_V$ along $(L\cdot \bs+\alpha=0)$ as we translate $V$ along $(L\cdot \bs+\alpha=0)$ such that $\Lambda_{L,\alpha}\times (L\cdot \bs+\alpha=0)|_V$ is always the part of $\CC^\rel(\Psi_F(\cM))|_{W\times V}$ over $(L\cdot \bs+\alpha=0)$. Therefore, 
\[\Lambda_{L,\alpha}\times (L\cdot \bs+\alpha=0)\]
is the part of $\CC^\rel(\Psi_F(\cM))|_{W\times \C^r}$ over $(L\cdot \bs+\alpha=0)$. 

For every $g\in G$, using the isomorphism \[G\simeq\Z^r\]
we can assume $g=g_\ba$ for some $\ba\in\Z^r$. Since by \eqref{eq:alptriso} for every $k>0$ we have 
\[\tau^{-1}_\ba(\wt{\sN_{k}/\sN_{-k}})\simeq \wt{\sN_{k}/\sN_{-k}},\]
the part of $\CC^\rel(\Psi_F(\cM))|_{W\times \C^r}$ over $g\cdot(L\cdot \bs+\alpha=0)$ is 
\[\Lambda_{L,\alpha}\times (L\cdot \bs+\alpha=0).\]
Therefore,
\[\CC^\rel(\Psi_F(\cM))=\sum_{g\in G}\sum_{L\in S(F,\cM)}\sum_{\alpha\in \tilde\kappa(L)} \Lambda_{L,\alpha}\times (g\cdot(L\cdot s+\alpha)=0).\]
To avoid counting components repeatedly, the summation above is taken over $\wt\kappa(L)$ (instead of $\kappa(L)$).
\end{proof}

We now give a formula for $\Lambda_{L,\alpha}$.
\begin{prop}\label{prop:lambdaLalf1dim}
In the situation of Theorem \ref{thm:relccgny}, for each $L\in S(F,\cM)$ and for each $\alpha\in\wt\kappa(L)$, then after picking a very general point 
$$\bal=(\alpha_1,\dots,\alpha_r)\in(L\cdot\bs+\alpha=0)$$ for each $i\in\{1,\dots, r\}$ such that $L\not= e_i$ we have
\[\Lambda_{L,\alpha}=\CC(\Psi_{f_i}(\cM_{\hat\bal_i})|_{W\times V_{\alpha_i}})\]
where $V_{\alpha_i}$ is a sufficiently small neighborhood of $\alpha_i\in\C$.
\end{prop}
\begin{proof}
Let us first explain the notation 
\[\CC(\Psi_{f_i}(\cM_{\hat\bal_i})|_{W\times V_{\alpha_i}}).\]
In the case $r=1$, $\Psi_{f_i}(\cM_{\hat\bal_i})$ is a relative holonomic $\shD$-module over the complex line $\C$. In this case, $S(f_i,\cM_{\hat\bal_i})=\{1\}$. By Theorem \ref{thm:relsd}, we then know $\Psi_{f_i}(\cM_{\hat\bal_i}$) is supported over an infinite discrete subset of $\C$, where the discrete subset containing $\alpha_i$ (by Corollary \ref{coro:basechageSF}).  With $V_{\alpha_i}$ small enough, $\Psi_{f_i}(\cM_{\hat\bal_i})|_{W\times V_{\alpha_i}}$ is just supported over $\alpha_i$. More explicitly, by construction $\Psi_{f_i}(\cM_{\hat\bal_i})|_{W\times V_{\alpha_i}}$
is just
\[\frac{(\sN^{\hat\bal_i}_k)_{m_{\alpha_i}}}{(\sN^{\hat\bal_i}_{-k})_{m_{\alpha_i}}}\]
for $k\gg 0$ (since both of them are killed by $(s+\alpha_i)^l$ for some $l>0$, sheafification does nothing),
where $\sN^{\hat\bal_i}_k$ is the $\shD_{X,\C[s_i]}$-module $\sN_k$ for $\cM_{\hat\bal_i}$ as in \S \ref{subsec:3notations}, and $m_{\alpha_i}$ is the maximal ideal of $\alpha_i\in\C$. But the latter module is particularly a $\shD_X$-module. Thus, by Lemma \ref{lm:altrelhol} 
\be\label{eq:ccnb1dim}
\CC^{\rel}(\Psi_{f_i}(\cM_{\hat\bal_i})|_{W\times V_{\alpha_i}})=\CC(\Psi_{f_i}(\cM_{\hat\bal_i})|_{W\times V_{\alpha_i}})\times \{\alpha_i\}.
\ee
As a consequence, $\Psi_{f_i}(\cM_{\hat\bal_i})|_{W\times V_{\alpha_i}}$ is a holonomic $\shD_X$-module (in fact, it is also regular if so is $\cM$). Moreover, if we use the terminology in \cite{KasV,MalV,Wubf}, $\Psi_{f_i}(\cM_{\hat\bal_i})|_{W\times V_{\alpha_i}}$ is the $\alpha_i$-nearby cycle of $\cM_{\hat\bal_i}$.

We now prove the required identity. By the proof of Theorem \ref{thm:relccgny}, to obtain $\Lambda_{L,\alpha}$ it suffices to consider a small neighborhood $V$ intersecting $(L\cdot\bs+\alpha=0)$. By the definition of characteristic cycles, we only need to consider a small neighborhood of a general point $\bal\in V\cap (L\cdot\bs+\alpha=0)$, which is equivalent to globally considering a small neighborhood $V_{\bal}$ of a very general point $\bal\in (L\cdot\bs+\alpha=0)$. By Corollary \ref{coro:basechageSF}, if $L\not=e_i$, then we have
\[\wt \delta_{\hat\bal_i}^*(\Psi_F(\cM))=\Psi_{f_i}(\cM_{\hat\bal_i}).\]
By Cohen-Macaulayness and Proposition \ref{prop:suppsubcmrh}, we can inductively apply \cite[Proposition 2.7]{Wuch} (by picking $r-1$ general hyperplanes passing $\bal$) and conclude 
\be\label{eq:ccnbcmm}
\wt \delta_{\hat\bal_i}^*(\CC^\rel(\Psi_F(\cM)))=\CC^\rel(\wt \delta_{\hat\bal_i}^*(\Psi_F(\cM)))=\CC^\rel(\Psi_{f_i}(\cM_{\hat\bal_i})).\ee
By the very general choice of $\bal$, we have 
\[\CC^\rel(\Psi_F(\cM))|_{W\times V_{\bal}}=\Lambda_{L,\alpha}\times (L\cdot \bs+\alpha=0),\]
Taking $V_{\alpha_i}=\delta_{\hat\bal_i}^{-1}(V_{\bal})$, we finish the proof by \eqref{eq:ccnb1dim} and \eqref{eq:ccnbcmm}.
\end{proof}

\subsection{A linearity conjecture}\label{subsec:linearextsuppconj}
Let $\cM$ be a regular holonomic $\shD_X$-module. By Proposition \ref{prop:j_*ncm}, the maximal extension $\cM(*D_F^{(r)})$ is $n$-Cohen-Macaulay. On the contrary, for some $k$ (and hence for all $k$ by applying $\tau$-translation) $\wt\sN_k$ are not $n$-Cohen-Macaulay in general. Thus, we might have non-zero 
\[\Ext^l_{\wt\shD_{X,R}}(\wt\sN_k,\wt\shD_{X,R}) \textup{ for } n\le l\le n+r\]
and hence non-zero right $\shD_{X,R}$-modules
\[\Ext^l_{\shD_{X,R}}(\sN_k,\shD_{X,R}) \textup{ for } n\le l\le n+r.\]

Motivated by Corollary \ref{cor:regholzbfnontr} and the relative condimension filtration in \S \ref{subsec:relcodimfil}, we conjecture:
\begin{conj}\label{conj:linarityextsupp}
Let $\cM$ be a regular holonomic $\shD_X$-module. If locally on a relatively compact open subset $W\subseteq X$
\[\Ext^l_{\shD_{X,R}}(\sN_k,\shD_{X,R})\not=0 \textup{ for some } n\le l\le n+r,\]
then we have:
\begin{enumerate}[label=(\roman*)]
    \item If $S_j(\Ext^l_{\shD_{X,R}}(\sN_k,\shD_{X,R}))\not=\emptyset$, then it is a finite union of translated $(j-n)$-codimensional linear subspaces of $\C^r$.
    \item If $\cM$ underlies a $\Q$-mixed Hodge module $($for example $\cM=\sO_X)$, then all $S_j(\Ext^l_{\shD_{X,R}}(\sN_k,\shD_{X,R}))$ are defined over $\Q$.
\end{enumerate}
\end{conj}
By Auslander regularity, Lemma \ref{lm:angrnchreq} and Theorem \ref{thm:relholmaxexcc}, we have 
\[\Ch^\rel(\Ext^n_{\wt\shD_{X,R}}(\wt\sN_k,\wt\shD_{X,R}))=\Ch^\rel(\wt\sN_k)=\Ch(\cM(*D))\times\C^r\]
and that $\Ext^n_{\shD_{X,R}}(\sN_k,\shD_{X,R})$ is pure of codimension $n$. Thus, the conjecture above holds for $l=n$.



When $\cM_0=\sO_X$, Budur made a conjecture about the structure of generators of the Bernstein-Sato ideal $B_F$ \cite[Conjecture 1.1]{Budur}. The following is a weak version of the conjecture.

\begin{conj}[Budur]\label{conj:linearzbfgeneral}
Each irreducible component of $Z(B_F)$ is a translated linear subspace of $\C^r$ defined over $\Q$.
\end{conj}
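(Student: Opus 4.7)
The plan is to derive Budur's conjecture from the author's lineariality conjecture \ref{conj:linarityextsupp} applied with $\cM = \cM_0 = \sO_X$; here $\sO_X$ underlies the trivial $\Q$-mixed Hodge module so both parts of that conjecture apply. Working locally on a relative compact open subset $W \subseteq X$, the identification $B_F = B(\sN_0/\sN_{-1})$ (noting $\sN_{-1} = g_{\mathbf{1}_r} \cdot \sN_0$) combined with Proposition \ref{prop:zbfsupp=} reduces the problem to showing that every irreducible component of $\supp_{\Spec R}(\sN_0/\sN_{-1})$ is a translated $\Q$-linear subspace of $\C^r$.

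I would then decompose this support via the relative codimension filtration of Section \ref{subsec:relcodimfil}: by Corollary \ref{cor:pureS_km}(2),
\[\supp_{\Spec R}(\sN_0/\sN_{-1}) = \bigcup_{j \geq 1} S_{n+j}(\sN_0/\sN_{-1}),\]
with each non-empty piece purely of codimension $j$ in $\C^r$. The case $j = 1$ is Corollary \ref{cor:regholzbfnontr} (the slopes $L \in \Z^r_{\geq 0}$ are automatically $\Q$-defined, and $\Q$-rationality of $\alpha \in \kappa(L)$ follows from quasi-unipotence of $\sO_X$ along $F$ via Kashiwara's rationality theorem). For $j \geq 2$ I would apply $\Rhom_{\shD_{X,R}}(-, \shD_{X,R})$ to the short exact sequence $0 \to \sN_{-1} \to \sN_0 \to \sN_0/\sN_{-1} \to 0$. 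Since $\sN_{-1} = g_{\mathbf{1}_r} \cdot \sN_0$, Theorem \ref{thm:!*Gequiv} together with the $G$-action on $R$ from \eqref{eq:gactiononR} identifies $\Ext^l(\sN_{-1}, \shD_{X,R})$ with the $\mathbf{1}_r$-translate of $\Ext^l(\sN_0, \shD_{X,R})$; in particular $S_{n+j}(\Ext^l(\sN_{-1}))$ is the $\mathbf{1}_r$-translate of $S_{n+j}(\Ext^l(\sN_0))$. By Conjecture \ref{conj:linarityextsupp}(i) both are finite unions of codimension-$j$ translated linear subspaces, and by part (ii) they are defined over $\Q$. Combining Auslander duality with purity of the codimension filtration (Theorem \ref{thm:relgabberkashiwara}), the graded piece $T_{n+j}/T_{n+j+1}$ of $\sN_0/\sN_{-1}$ is pure of codimension $n+j$ and one has $\supp(T_{n+j}/T_{n+j+1}) = \supp \Ext^{n+j}(T_{n+j}/T_{n+j+1}, \shD_{X,R})$; a codimension-filtration spectral sequence extracts this support from $\Ext^{n+j}(\sN_0/\sN_{-1}, \shD_{X,R})$, which the long exact sequence bounds in terms of $\Ext^*(\sN_0, \shD_{X,R})$ and its $\mathbf{1}_r$-translate. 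This forces every component of $S_{n+j}(\sN_0/\sN_{-1})$ to be a codimension-$j$ translated $\Q$-linear subspace, finishing the reduction.

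\textbf{The hard part.} The delicate step will be the final bookkeeping: one must check that no component of $S_{n+j}(\sN_0/\sN_{-1})$ survives only as a cancellation between $\Ext^l(\sN_0)$ and $\Ext^l(\sN_{-1})$ in the long exact sequence, and that the codimension-filtration spectral sequence degenerates enough on the pure piece to read off $S_{n+j}$ from the corresponding global $\Ext^{n+j}$. I would first verify the mechanism in the base case $r = 2$ (where only $j = 1, 2$ occur), using the proof of Corollary \ref{cor:regholzbfnontr} via the graph embedding and logarithmic characteristic cycle as a template for the slope bookkeeping before attempting the inductive/spectral-sequence argument in general.
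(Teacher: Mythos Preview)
First, a framing remark: the statement you are attempting is a \emph{conjecture}, and the paper does not prove it unconditionally either. What the paper does prove is exactly what you are proposing, namely the implication ``Conjecture~\ref{conj:linarityextsupp} $\Rightarrow$ Conjecture~\ref{conj:linearzbfgeneral}'' (this is Proposition~\ref{prop:myconjimplybudur}). So you and the paper are solving the same problem.

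Your core strategy coincides with the paper's: reduce via $B_F=B(\sN_0/\sN_{-1})$ and Proposition~\ref{prop:zbfsupp=} to the support of $\sN_0/\sN_{-1}$; dualize the short exact sequence $0\to\sN_{-1}\to\sN_0\to\sN_0/\sN_{-1}\to0$; use that $\sN_0,\sN_{-1}$ are $n$-pure (as submodules of the $n$-Cohen--Macaulay $\wt\sM$) together with Bj\"ork's Auslander-regular bound on $\Ext^{n+k}$ of a pure module; and then invoke Conjecture~\ref{conj:linarityextsupp} on the relevant $S_j(\Ext^l(\sN_k))$. Your observation that $\Ext^l(\sN_{-1})$ is the $\mathbf{1}_r$-translate of $\Ext^l(\sN_0)$ is correct and is implicit in the paper as well.

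The substantive difference is methodological. You attack the full support globally, stratifying $\supp(\sN_0/\sN_{-1})$ by the codimension filtration and then proposing a ``codimension-filtration spectral sequence'' to reconcile $S_{n+j}(\sN_0/\sN_{-1})$ with the $\Ext$'s of $\sN_0$ and its translate. The paper instead fixes one irreducible component $E$ of codimension $l\ge2$, picks a \emph{general} point $p\in E$, and shrinks to a Zariski open $V\ni p$ on which $\wt\sN_0/\wt\sN_{-1}$ becomes $(n+l)$-Cohen--Macaulay. After this reduction the long exact sequence becomes sharp: $\Ext^{n+l}(\sN_0)$ has codimension $\ge n+l+1$ by Bj\"ork, so the entire codimension-$(n+l)$ part of $\Ext^{n+l}(\sN_0/\sN_{-1})$ (which now equals $\Ch^\rel(\sN_0/\sN_{-1})$ by Cohen--Macaulayness) is forced to land inside $S_{n+l}(\Ext^{n+l-1}(\sN_{-1}))$. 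Hence $E$ is one of the translated linear pieces predicted by Conjecture~\ref{conj:linarityextsupp}.

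This localization-to-CM trick is precisely what dissolves the ``hard part'' you isolate: once the quotient is Cohen--Macaulay at the general point, there is no spectral sequence to run and no cancellation to worry about. Your global approach is not wrong, but you would have to make the filtration/spectral-sequence argument precise (e.g.\ verify $\Ext^{n+j}(T_{n+j}/T_{n+j+1})\simeq\Ext^{n+j}(T_{n+j})$ and then control $\Ext^{n+j}(T_{n+j})$ inside $\Ext^{n+j}(\sN_0/\sN_{-1})$ via the quotient $M/T_{n+j}$), and the paper's one-component-at-a-time reduction is the cleaner path.
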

By Sabbah \cite{Sab2} (or one can apply Corollary \ref{cor:regholzbfnontr}) and \cite{Gyo}, codimension-one irreducible components of $Z(B_F)$ are translated linear subspaces of $\C^r$ defined over $\Q$.

\begin{prop}\label{prop:myconjimplybudur}
Conjecture \ref{conj:linarityextsupp} $\Longrightarrow$ Conjecture \ref{conj:linearzbfgeneral}.
\end{prop}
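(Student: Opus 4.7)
The plan is to identify $Z(B_F)$ locally with the relative support of $\sN_0/\sN_{-1}$, and then transfer the conclusion of Conjecture \ref{conj:linarityextsupp} (applied to $\sN_0$ and $\sN_{-1}$) to the quotient $\sN_0/\sN_{-1}$ via biduality in the Auslander regular category of $\shD_{X,R}$-modules. Since the codimension-one components of $Z(B_F)$ are already known to be translated hyperplanes defined over $\Q$ by Corollary \ref{cor:regholzbfnontr} together with \cite{Sab2,Gyo}, the essential case concerns components of codimension $\ge 2$.

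First, working locally on a relative compact open $W\subseteq X$ intersecting $D$, Proposition \ref{prop:zbfsupp=} applied to $\sN_0/\sN_{-1}$ (which is coherent over $\shD_{X,R}$ and relative holonomic as a quotient of the relative holonomic $\sN_0$, cf.\ Theorem \ref{thm:relholmaxexcc}) gives $Z(B_F|_W)=\supp_{\Spec R}(\sN_0/\sN_{-1})|_W$. The relative Gabber-Kashiwara filtration (Theorem \ref{thm:relgabberkashiwara} and Corollary \ref{cor:pureS_km}) then provides the pure-codimension decomposition
\[\supp_{\Spec R}(\sN_0/\sN_{-1})=\bigcup_{d\ge 1}S_{n+d}(\sN_0/\sN_{-1}),\]
with each $S_{n+d}$ algebraically closed and purely $d$-codimensional in $\C^r$. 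In particular, a codimension-$d$ irreducible component of $Z(B_F)$ is precisely an irreducible component of $S_{n+d}(\sN_0/\sN_{-1})$, so the goal reduces to showing that $S_{n+d}(\sN_0/\sN_{-1})$ is a finite union of translated $d$-codimensional linear subspaces defined over $\Q$: by purity, a codimension-$d$ irreducible subvariety of a codimension-$d$ translated linear subspace must equal the latter.

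The transfer step combines biduality over the Auslander regular ring $\shD_{X,R}$, which yields $\supp_{\Spec R}(\sN_0/\sN_{-1})=\bigcup_l\supp_{\Spec R}E^l(\sN_0/\sN_{-1})$ for $E^l=\Ext^l_{\shD_{X,R}}(-,\shD_{X,R})$, with the long exact $\Ext$ sequence induced by $0\to\sN_{-1}\to\sN_0\to\sN_0/\sN_{-1}\to 0$:
\[\supp E^l(\sN_0/\sN_{-1})\subseteq \supp E^l(\sN_0)\cup\supp E^{l-1}(\sN_{-1})\qquad\textup{for every }l.\]
Since $\sN_{-1}=g_{\mathbf 1_r}\cdot\sN_0$ as submodules of $\sM$, the relative support of $E^l(\sN_{-1})$ is the $g_{\mathbf 1_r}$-translate of the support of $E^l(\sN_0)$, and such translations preserve both linearity and $\Q$-rationality. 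For $\cM=\sO_X$, which underlies a $\Q$-mixed Hodge module, Conjecture \ref{conj:linarityextsupp}(i)--(ii) then ensures that each $S_j(E^l(\sN_k))$ with $k\in\{0,-1\}$ is a finite union of translated $(j-n)$-codimensional linear subspaces defined over $\Q$, so $\supp E^l(\sN_k)=\bigcup_j S_j(E^l(\sN_k))$ is a finite union of translated linear subspaces of $\C^r$ defined over $\Q$.

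The main technical obstacle is to exclude the possibility that a codimension-$d$ irreducible component of $S_{n+d}(\sN_0/\sN_{-1})$ sits strictly inside a translated linear subspace of strictly lower codimension appearing in the bounding union---in which case linearity of the containing subspace would not descend to the component. The plan is to work throughout at the level of pure-codimension graded pieces: identify $S_{n+d}(\sN_0/\sN_{-1})$ with the codimension-$d$ pure part of $\supp E^{n+d}E^{n+d}(\sN_0/\sN_{-1})$ via the Gabber-Kashiwara biduality spectral sequence, then iterate the $\Ext$ long exact sequence together with the same spectral sequence applied to $E^{n+d-1}(\sN_{-1})$ and $E^{n+d}(\sN_0)$, while exploiting the fact that $\sN_0$ and $\sN_{-1}$ are pure of codimension $n$ (cf.\ the remark after Proposition \ref{prop:j_*ncm}) to suppress contributions of codimensions other than $d$. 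This should pin $S_{n+d}(\sN_0/\sN_{-1})$ inside a finite union of codimension-$d$ pure pieces $S_{n+d}(E^l(\sN_k))$, and Conjecture \ref{conj:linarityextsupp}(i)--(ii) closes the argument.
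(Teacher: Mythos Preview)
Your overall strategy coincides with the paper's: identify $Z(B_F)$ with $\supp_{\Spec R}(\sN_0/\sN_{-1})$, use the long exact $\Ext$-sequence of $0\to\sN_{-1}\to\sN_0\to\sN_0/\sN_{-1}\to0$, and exploit the $n$-purity of $\sN_0,\sN_{-1}$ to force an irreducible component $E$ of codimension $l$ into some $S_{n+l}(E^{n+l-1}(\sN_k))$, where Conjecture~\ref{conj:linarityextsupp} applies. You also correctly isolate the only nontrivial point, namely ruling out that $E$ sits strictly inside a lower-codimension linear piece of $\supp E^{l}(\sN_k)$.

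The difference lies in how this obstacle is dispatched. The paper does not run the biduality spectral sequence globally; instead it \emph{localizes} at a general point $p\in E$ in the Zariski topology of $\C^r$. Near such a point one can arrange that $\wt\sN_0/\wt\sN_{-1}$ is $(n+l)$-Cohen--Macaulay, so its only nonvanishing $\Ext$ is in degree $n+l$ and has the same characteristic variety as the module itself. Combined with the Auslander-regular fact (invoked via \cite[A.IV.2.6]{Bj}) that for an $n$-pure module the off-diagonal $\Ext^{n+l}$ already lies in $T_{n+l+1}$, the long exact sequence immediately yields $E\subseteq S_{n+l}(\Ext^{n+l-1}(\sN_{-1}))$, in a single step and with the correct codimension automatically matched. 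Your biduality-plus-pure-piece plan would reach the same inclusion, but you have only sketched it: the crucial input you allude to as ``purity'' is precisely the statement $T_{n+l+1}(\Ext^{n+l}(\sN_k))=\Ext^{n+l}(\sN_k)$ for $l\ge1$, which needs to be stated and used explicitly (as the paper does) to kill the $\Ext^{n+l}(\sN_0)$ contribution in codimension $l$. Once that is made precise, together with the embedding $T_{n+l}/T_{n+l+1}\hookrightarrow E^{n+l}E^{n+l}$, your argument closes; the paper's localization just packages the same content more economically.
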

\begin{proof}
We assume $E$ an irrducible component of $Z(B_F)$ of codimension $2\le l\le r$. We fix a point $(x,p)$ such that $(\wt\sN_0/\wt\sN_{-1})_{(x,p)}\not=0$ (by Proposition \ref{prop:zbfsupp=} and $\wt\sN_0/\wt\sN_{-1}= \wt{\sN_0/\sN_{-1}}$) and $p$ is a general point in $E$. By Corollary \ref{cor:algdecompofchrel} and the dimension of $E$, we can choose a small neighborhood $W$ of $x\in X$ and a Zariski open neighborhood $V\subseteq\C^r$ of $p$
such that the codimension filtration satisfies 
\[T_{j}(\wt\sN_0/\wt\sN_{-1})=\wt\sN_0/\wt\sN_{-1} \]
for $j<n+l$ over $W\times V$.
By Lemma \ref{lm:angrnchreq} and Corollary \ref{cor:puritylocgcomp}, the Auslander regularity tells
\[\textup{codim}_\C\Ch^\rel(\Ext^{n+j}_{\wt\shD_{X,R}}(\wt\sN_0/\wt\sN_{-1},\wt\shD_{X,R})|_{W\times V})> l \textup{ for $j>l$}.\]
By shrinking $V$ in the Zariski topology (pick a more general $p\in E\cap V$ if necessary), we can assume that on $W\times V$
$$T_{n+l}(\wt\sN_0/\wt\sN_{-1})=\wt\sN_0/\wt\sN_{-1}$$
and $\wt\sN_0/\wt\sN_{-1}$ is $(n+l)$-Cohen-Macaulay.

Now, we consider the short exact sequence (on $W$)
\[0\to \sN_{-1}\rightarrow \sN_0\to \frac{\sN_0}{\sN_{-1}}\to0.\]
Taking analytic sheafification and dual, we have a long exact sequence \[\dots\to\Ext^{n+l-1}_{\wt\shD_{X,R}}(\wt\sN_{-1},\wt\shD_{X,R})\to  \Ext^{n+l}_{\wt\shD_{X,R}}(\frac{\wt\sN_0}{\wt\sN_{-1}},\wt\shD_{X,R})\rightarrow \Ext^{n+l}_{\wt\shD_{X,R}}(\wt\sN_0,\wt\shD_{X,R})\rightarrow\cdots.\]
Since $\wt\sN_0$ and $\wt\sN_{-1}$ are $n$-pure (since they are submodules of a $n$-Cohen-Macaulay module $\wt\sM$ by Proposition \ref{prop:j_*ncm} and Proposition \ref{prop:suppsubcmrh}), by \cite[A.IV.2.6]{Bj}, Corollary \ref{cor:puritylocgcomp} and Lemma \ref{lm:angrnchreq} we have 
\[T_{n+l+1}(\Ext^{n+l}_{\wt\shD_{X,R}}(\wt\sN_0,\wt\shD_{X,R}))=\Ext^{n+l}_{\wt\shD_{X,R}}(\wt\sN_0,\wt\shD_{X,R})\]
and
\[T_{n+l}(\Ext^{n+l-1}_{\wt\shD_{X,R}}(\wt\sN_0,\wt\shD_{X,R}))=\Ext^{n+l-1}_{\wt\shD_{X,R}}(\wt\sN_0,\wt\shD_{X,R}).\]
Since $\wt\sN_0/\wt\sN_{-1}$ is $(n+l)$-Cohen-Macaulay on $W\times V$, we know 
\[\Ch^\rel(\Ext^{n+l-1}_{\wt\shD_{X,R}}(\frac{\wt\sN_0}{\wt\sN_{-1}},\wt\shD_{X,R}))=\Ch^\rel(\frac{\wt\sN_0}{\wt\sN_{-1}})\]
on $W\times V$.
Therefore, 
\[\Ch^\rel(\Ext^{n+l-1}_{\wt\shD_{X,R}}(\frac{\wt\sN_0}{\wt\sN_{-1}},\wt\shD_{X,R}))\subseteq \Ch^\rel(\frac{T_{n+l}(\Ext^{n+l-1}_{\wt\shD_{X,R}}(\wt\sN_0,\wt\shD_{X,R}))}{T_{n+l+1}(\Ext^{n+l-1}_{\wt\shD_{X,R}}(\wt\sN_0,\wt\shD_{X,R}))}).\]
 By Proposition \ref{prop:zbfsupp=} and Proposition-Definition \ref{prop-def:relsupp}, we have \[E\subseteq S_{n+l}(\Ext^{n+l-1}_{\wt\shD_{X,R}}(\wt\sN_0,\wt\shD_{X,R})),\]
 which proves the required implication. 
\end{proof}

\section{Alexander complex of Sabbah}\label{sec:recalalexsab}
In this section, we recall the construction of the Alexander complexes in \cite{Sab90}.
\subsection{Alexander complex of Sabbah}\label{subsec:AcomSab}
We keep using notations in \S \ref{subsec:3notations}. The universal Alexander sheaf is defined by 
\[\psi^{\textup{univ}}=Exp_!(\C_{\C^r})\]
where $Exp_!$ is the proper direct image functor of $Exp$ (cf. \cite[2.5]{KSbook}) and $\C_{\C^r}$ is the $\C$-constant sheaf on $\C^r$. Since $Exp$ gives the universal covering of $(\C^*)^r$, $\psi^{\textup{univ}}$ is a locally free sheaf of $\C[G]$-modules. We use the isomorphism 
\[\C[G]\simeq\C[t_1^{\pm},\dots,t_r^{\pm}]\]
with $t_i$ representing the (counterclockwise) loops around the puncture of each factor of $(\C^*)^r$. Then the monodromy of $\psi^{\textup{univ}}$ around  each loop is induced by the $t_i$-multiplication on $\C[t_1^{\pm},\dots,t_r^{\pm}]$. 
For $\cF^\bullet\in D^b_c(\C_X)$, its \emph{Alexander complex} along $F$ (also known as \emph{Sabbah specialization complex}) is defined as 
\[\psi_F(\cF^\bullet)=i_{D,*}i_D^{-1}Rj_*(\cF^\bullet|_{X\setminus D}\otimes_\C F^{-1}\psi^{\textup{univ}}),\]
where 
$j\colon X\setminus D\hookrightarrow X$ is the open embedding and $i_D\colon D\hookrightarrow X$ is the closed embedding. More generally, for every subset $I\subseteq \{1,2,\dots,r\}$, the \emph{Sabbah specialization complex} along $D_I=\bigcap_{i\in I}D_i$ is define as
\[\psi_{D_I}(\cF^\bullet)=i_{D_I,*}i_{D_I}^{-1}Rj_*(\cF^\bullet|_{X\setminus D}\otimes_\C F^{-1}\psi^{\textup{univ}}),\]
where $i_{D_I}\colon D_I\hookrightarrow X$ is the closed embedding.
Since $\psi^{\textup{univ}}$ is locally free over $\C[G]$, $\psi_{D_I}(\cF^\bullet)\in D^b_c(\C[G])$. 


\section{Comparison}\label{sec:comparison}
\subsection{The universal relative flat bundle}\label{subsec:univrellocs}
On $\C^r_{\mathbf{x}}\times\C^r$ with $\mathbf{x}=(x_1,\dots,x_r)$ the coordinates, we write 
\[D_{\mathbf x}=(\prod_{i=1}^r x_i=0) \quad \textup{and}\quad U=\C^r\setminus D.\]
Then using the recipe from \S \ref{sec:*!ext}, we have the maximal extension $\sO_{\C^r_{\mathbf{x}}}(*D_{\mathbf x}^{(r)})$. We set 
\[\sO^{\textup{univ}}=\sO_{\C^r_{\mathbf{x}}}(*D_{\mathbf x}^{(r)})|_{U\times\C^r },\]
calling it the universal relative flat bundle on $U\times\C^r$ because 
\be\label{eq:unvimaxexpb}
\wt F^*(\sO_{\C^r_{\mathbf{x}}}(*D_{\mathbf x}^{(r)}))=\sO_X(*D_F^{(r)})
\ee
in the situation of \S \ref{subsec:3notations}, with $\wt F=(F,\textup{id})\colon X\times\C^r\to \C^r\times \C^r$ the associated morphism, or equivalently 
\[F^*(\sO_{\C^r_{\mathbf x}}(*D_{\mathbf x})\otimes_\C \C[\bs]\cdot {\mathbf x}^\bs)= \sO_X(*D_F)\otimes_\C \C[\bs]\cdot F^\bs.\]
\begin{remark}
Since $\mathbf x$ is the complex coordinates of $\C^r_{\mathbf x}$, it is a direct computation to get the relative characteristic cycle:
\[\CC^{\rel}\big(\sO_{\C^r_{\mathbf{x}}}(*D_{\mathbf x}^{(r)})\big)=(\sum_{\beta} T^*_{\bar X_\beta}\C^r_{\mathbf x})\times \C^r,\]
where $\bigsqcup_{\beta}X_\beta=\C^r_{\mathbf x}$ is the stratification naturally given by the coordinates $\mathbf x$. In particular, $\sO_{\C^r_{\mathbf{x}}}(*D_{\mathbf x}^{(r)})$ is relative holonomic. In fact, it is regular relative holonomic (see \cite[\S 2.1]{FSO} for the definition of relative regularity). Therefore, if $\cM$ is a regular holonomic $\shD_X$-module, then the relative holonomicity of $\cM(*D_F^{(r)})$ in Theorem \ref{thm:relholmaxexcc} can also be deduced from \cite[Theorem 2]{FFS19}.
\end{remark}

To avoid confusion, we set the morphism 
\[\pi_{\mathbf x}=(\textup{id},Exp)\colon U\times\C^r\rightarrow U\times (\C^*)^r.\]
\begin{lemma}\label{lm:relbabyRH}
We have
    \[\DR_{U\times\C^r/\C^r}(\sO^{\textup{univ}})\stackrel{q.i.}{\simeq}\cH^{-n}\DR_{U\times\C^r/\C^r}(\sO^{\textup{univ}})[n]\]
\end{lemma}
\begin{proof}
    We can decompose $U\times\C^r/\C^r=(\C^*_{x_1}\times\C)\times\cdots (\C^*_{x_r}\times\C)$. It is thus sufficient to prove 
    \[\DR_{\C^*_x\times\C/\C}(\sO^{\textup{univ}}_{\C_x\times \C})\stackrel{q.i.}{\simeq}\cH^{-1}\DR_{\C^*_x\times\C/\C}(\sO^{\textup{univ}}_{\C_x\times \C})[1].\]
    In this case, since we are working on $\C^*_x\times\C$
    \[\DR_{\C^*_x\times\C/\C}(\sO^{\textup{univ}}_{\C_x\times \C})=[\sO^{\textup{univ}}_{\C_x\times \C}\xrightarrow{x\partial_{x}}\sO^{\textup{univ}}_{\C_x\times \C}].\]
    By direct computation, we know that $x\partial_x$ acts on $\sO^{\textup{univ}}_{\C_x\times \C}$ surjectively. Moreover, its kernel is 
    \[x^{-s}\cdot p^{-1}\cO_\C\hookrightarrow \sO^{\textup{univ}}_{\C_x\times \C}\]
    such that $x^{-s}\mapsto x^{-s}\cdot x^s$ (notice that $x^{-s}$ is a global section of $\cO_{\C^*_x\times \C}$ and that $x^s$ is the generator of $\sO^{\textup{univ}}_{\C_x\times \C}$), where $p: \C^*_x\times\C\to \C$ is the second projection.
\end{proof}
By Lemma \ref{lm:DReq}, $\DR_{U\times\C^r/\C^r}(\sO^{\textup{univ}})$ and 
\[\cH^{-n}\DR_{U\times\C^r/\C^r}(\sO^{\textup{univ}})\simeq{\mathbf x}^{-\bs}\cdot p^{-1}\cO_{\C^r}\]
are $G$-equivariant. Then we define 
\[\cL^{\textup{univ}}\coloneqq \pi_{{\mathbf x},*}^G({\mathbf x}^{-\bs}\cdot p^{-1}\cO_{\C^r}).\]
By construction, $\cL^{\textup{univ}}$ is characterized by the following two conditions:
\begin{enumerate}[label=(\roman*)]
    \item  $\cL^{\textup{univ}}|_{W\times (\C^*)^r}\simeq pr^{-1}\sO_{(\C^*)^r}|_{W\times (\C^*)^r}$ for all simply connected open subsets $W\subset (\C^*)^r$
    \item for every $\blamb=(\lambda_1,\dots,\lambda_r)\in (\C^*)^r$, $\wt i_{\blamb}^*\cL^{\textup{univ}}$ is the local system on the first factor of $(\C^*)^r\times (\C^*)^r$ satisfying that the monodromy along each $t_i$ is given by multiplication by $\lambda_i$, where $i_{\blamb}\colon \{\blamb\}\hookrightarrow (\C^*)^r$ is the closed embedding into the second factor of $(\C^*)^r\times (\C^*)^r$.
\end{enumerate}
\begin{lemma}\label{lm:univreldm}
We have
\[\textup{$\pi_{{\mathbf x},*}^G(\DR_{U\times\C^r/\C^r}(\sO^{\textup{univ}}))\stackrel{q.i.}{\simeq} \cL^{\textup{univ}}[n]$,  $\sO_{U\times\C^r}\otimes_{p^{-1}\sO_{\C^r}}\pi^{-1}_{\mathbf x}(\cL^{\textup{univ}})\simeq \sO^{\textup{univ}}$}.\]
and $\cL^{\textup{univ}}\simeq \wt\psi^{\textup{univ}}.$
\end{lemma}
\begin{proof}
   The first quasi-isomorphism follows by definition. The second isomorphism is given by \cite[Part I.0.3.Lemma]{BL} and relative Riemann-Hilbert Correspondence. Lastly, similar to the proof of Lemma \ref{lm:relbabyRH}, $\cL^{\textup{univ}}\simeq \wt\psi^{\textup{univ}}$ can be reduced to the case $r=1$ and then checked directly. 
\end{proof}
\subsection{Proof of Theorem \ref{thm:RHALXhigh}}
We first prove the following theorem.
\begin{theorem}\label{thm:RHj_*}
Let $\cM$ be a regular holonomic $\shD_X$-module. Then naturally
$$\pi_{*}^G(\DR_{X\times\C^r/\C^r}(\cM(*D_F^{(r)})))\stackrel{q.i.}{\simeq} \wt{Rj_*}(\DR(\cM)|_{X\setminus D}\otimes_\C F^{-1}\psi^{\textup{univ}}).$$
\end{theorem}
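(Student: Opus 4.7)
The plan is to verify the quasi-isomorphism first on the open part $(X\setminus D)\times\C^r$ by reducing to the universal relative flat bundle $\sO^{\textup{univ}}$, and then extend it across $D$ via a Kashiwara-type $Rj_{*}$ formula at the level of relative de Rham complexes.

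On $(X\setminus D)\times\C^r$, the construction of $\sM=\cM(*D)\otimes_\C\C[\bs]F^{\bs}$ combined with \eqref{eq:unvimaxexpb} yields a natural isomorphism of relative $\shD_{(X\setminus D)\times\C^r/\C^r}$-modules
\[
\cM(*D_F^{(r)})\big|_{(X\setminus D)\times\C^r}\simeq \cM|_{X\setminus D}\otimes_{\sO}\wt F^{*}\sO^{\textup{univ}},
\]
where $\cM|_{X\setminus D}$ is understood as its pullback to $(X\setminus D)\times\C^r$. Since $\wt F^{*}\sO^{\textup{univ}}$ is a relative flat line bundle, the tensor-product formula for relative de Rham gives
\[
\DR_{(X\setminus D)\times\C^r/\C^r}\bigl(\cM(*D_F^{(r)})|_{(X\setminus D)\times\C^r}\bigr)\simeq \DR_X(\cM)|_{X\setminus D}\otimes_\C\wt F^{-1}\cF^{\textup{univ}},
\]
where $\cF^{\textup{univ}}\coloneqq\DR_{U\times\C^r/\C^r}(\sO^{\textup{univ}})[-n]$. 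The cartesian square
\[
\begin{tikzcd}
(X\setminus D)\times\C^r\arrow[r,"\wt F"]\arrow[d,"\pi"'] & U\times\C^r\arrow[d,"\pi_{\mathbf x}"]\\
(X\setminus D)\times(\C^*)^r\arrow[r] & U\times(\C^*)^r
\end{tikzcd}
\]
permits base change for $\pi^G_{*}$; since $\DR_X(\cM)|_{X\setminus D}$ is $G$-invariant, combining this with Lemmas \ref{lm:univreldm} and \ref{lm:univalxm} yields on $(X\setminus D)\times(\C^*)^r$
\[
\pi^G_{*}\DR_{(X\setminus D)\times\C^r/\C^r}\bigl(\cM(*D_F^{(r)})|_{(X\setminus D)\times\C^r}\bigr)\simeq \wt{\DR_X(\cM)|_{X\setminus D}\otimes_\C F^{-1}\psi^{\textup{univ}}},
\]
the right-hand side being identified as an analytic sheafification via compatibility of $\wt{\,\cdot\,}$ with $F^{-1}$ and with tensor by a $\C$-constant sheaf.

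It remains to establish the Kashiwara-type identity
\[
\DR_{X\times\C^r/\C^r}\bigl(\cM(*D_F^{(r)})\bigr)\simeq R(j\times\textup{id})_{*}\DR_{(X\setminus D)\times\C^r/\C^r}\bigl(\cM(*D_F^{(r)})|_{(X\setminus D)\times\C^r}\bigr).
\]
Applying $\pi^G_{*}$ and using its commutation with $R(j\times\textup{id})_{*}$ together with the compatibility of analytic sheafification with $Rj_{*}$ then yields the theorem. The main obstacle lies precisely in this last identity: although $\cM(*D_F^{(r)})$ is relative regular holonomic (cf.\ the remark after Lemma \ref{lm:univreldm}, invoking \cite{FFS19}), upgrading this regularity into a Kashiwara-style $Rj_{*}$ formula requires a relative Riemann--Hilbert argument transposed from the classical localization of regular holonomic $\shD$-modules along $D$. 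Carrying this out carefully while tracking the analytic sheafifications throughout is where the regularity hypothesis on $\cM$ is essential.
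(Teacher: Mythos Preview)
Your computation on $(X\setminus D)\times\C^r$ is essentially the paper's, and correctly gives the isomorphism
\[
\pi^G_{*}\DR_{X\times\C^r/\C^r}\bigl(\cM(*D_F^{(r)})\bigr)\big|_{(X\setminus D)\times(\C^*)^r}\;\simeq\;\wt{\DR(\cM)|_{X\setminus D}\otimes_\C F^{-1}\psi^{\textup{univ}}}.
\]
The gap is exactly where you flag it: you reduce the theorem to the relative Kashiwara formula
\[
\DR_{X\times\C^r/\C^r}\bigl(\cM(*D_F^{(r)})\bigr)\simeq R(j\times\textup{id})_{*}\,\DR_{(X\setminus D)\times\C^r/\C^r}\bigl(\cM(*D_F^{(r)})|_{(X\setminus D)\times\C^r}\bigr),
\]
but then do not prove it. Appealing to relative regularity and \cite{FFS19} is not a solution, since the relative Riemann--Hilbert there is established only over one-dimensional bases, and in any case you would be invoking a statement at least as strong as the one you are trying to prove.

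The paper sidesteps this obstacle entirely. Rather than proving a relative $Rj_*$ formula, it uses the open-set isomorphism together with adjunction to produce a \emph{morphism}
\[
\pi^G_{*}\DR_{X\times\C^r/\C^r}\bigl(\cM(*D_F^{(r)})\bigr)\longrightarrow \wt{Rj_*}\bigl(\DR(\cM)|_{X\setminus D}\otimes_\C F^{-1}\psi^{\textup{univ}}\bigr)
\]
on all of $X\times(\C^*)^r$, and then checks that $\bL\wt i_{\blamb}^*$ of this morphism is a quasi-isomorphism for every $\blamb\in(\C^*)^r$. On the left, Proposition~\ref{prop:!*gnbpbtopoint} gives $\bL\wt i_{\bal}^*\cM(*D_F^{(r)})\simeq\cM_\bal(*D)$, and the \emph{absolute} Kashiwara theorem for the regular holonomic $\shD_X$-module $\cM_\bal(*D)$ yields $\DR(\cM_\bal(*D))\simeq Rj_*(\DR(\cM)|_{X\setminus D}\otimes L_\blamb)$. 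On the right one computes the same thing directly. One then concludes via the relative Nakayama-type criterion \cite[Proposition~2.2 and Theorem~3.7]{FSO}. Thus the regularity of $\cM$ enters only through the classical absolute statement, fiber by fiber, and no genuinely relative $Rj_*$ comparison is needed.
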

\begin{proof}
By using the second isomorphism in Lemma \ref{lm:univreldm} and \eqref{eq:unvimaxexpb}, we get 
\be\label{eq:openkeyisouniv}
\DR_{X\times\C^r/\C^r}(\cM(*D_F^{(r)}))|_{(X\setminus D)\times \C^r}\simeq p_1^{-1}(\DR(\cM|_{X\setminus D}))\otimes_\C \wt F^{-1}(\pi^{-1}_{\mathbf x}(\cL^{\textup{univ}})),
\ee
where $p_1\colon X\times \C^r\to X$ is the first projection. Since $G$ acts trivially on objects originating from $X$ (for instance $\cM$ and $p_1^{-1}(\DR(\cM|_{X\setminus D}))$), by \cite[Part I.0.3.Lemma]{BL}, \eqref{eq:openkeyisouniv} is a $G$-equivariant isomorphism. Therefore, by the third isomorphism in Lemma \ref{lm:univreldm}, we have
\[\pi_{*}^G(\DR_{X\times\C^r/\C^r}(\cM(*D_F^{(r)})))|_{(X\setminus D)\times (\C^*)^r}\simeq \wt{\DR(\cM)|_{X\setminus D}\otimes_\C F^{-1}\psi^{\textup{univ}}}.\]
By adjunction, the isomorphism above induces a morphism 
\be\label{eq:keynmrdr}
\pi_{*}^G(\DR_{X\times\C^r/\C^r}(\cM(*D_F^{(r)})))\rightarrow \wt{Rj_*}(\DR(\cM)|_{X\setminus D}\otimes_\C F^{-1}\psi^{\textup{univ}}).
\ee
We then make derived pullback of the morphism above by $\wt i_{\blamb}$ for every $\blamb\in(\C^*)^r$. But by definition 
\[\bL\wt i_{\blamb}^*\pi_{*}^G(\DR_{X\times\C^r/\C^r}(\cM(*D_F^{(r)})))\simeq \bL\wt i_{\bal}^*\DR_{X\times\C^r/\C^r}(\cM(*D_F^{(r)}))\]
for any $\bal\in Exp^{-1}(\blamb)$. Since $\cM$ is regular, by Proposition \ref{prop:!*gnbpbtopoint} 
\[\bL\wt i_{\bal}^*(\DR_{X\times\C^r/\C^r}(\cM(*D_F^{(r)})))\simeq \DR(\bL\wt i_{\bal}^*(\cM(*D_F^{(r)}))\simeq Rj_*(\DR(\cM)|_{X\setminus D}\otimes L_{\blamb}).\]
Moreover, by construction
\[\bL\wt i_{\blamb}^*\wt{Rj_*}(\DR(\cM)|_{X\setminus D}\otimes_\C F^{-1}\psi^{\textup{univ}})\simeq {Rj_*}(\DR(\cM)|_{X\setminus D}\otimes_\C F^{-1}\psi^{\textup{univ}})\otimes^\bL_{\C[G]} \C_{\blamb},\]
where $\C_{\blamb}$ is the residue field of $\blamb\in(\C^*)^r$. Since $\psi^{\textup{univ}}$ is locally free over $\C[G]$ and hence over $\C$,
\[{Rj_*}(\DR(\cM)|_{X\setminus D}\otimes_\C F^{-1}\psi^{\textup{univ}})\otimes_{\C[G]} \C_{\blamb}\simeq {Rj_*}(\DR(\cM)|_{X\setminus D}\otimes_\C F^{-1}\psi^{\textup{univ}}\otimes_{\C[G]} \C_{\blamb}).\]
By the definition of $\psi_{\textup{univ}}$,
\[F^{-1}\psi^{\textup{univ}}\otimes_{\C[G]} \C_{\blamb}\simeq L_\blamb.\]
Therefore, the $\bL\wt i_{\blamb}^*$ pullback of the morphism \eqref{eq:keynmrdr} is an isomorphism for each $\blamb$. By \cite[Proposition 2.2 and Theorem 3.7]{FSO}, the morphism \eqref{eq:keynmrdr} is isomorphic. \end{proof}

Although Theorem \ref{thm:RHALX} is a special case of Theorem \ref{thm:RHALXhigh} (by Corollary \ref{cor:dualalcop}), we first give a direct proof of Theorem \ref{thm:RHALX} as a warm-up. We keep using notation in the proof of Theorem \ref{thm:RHj_*}.
By Lemma \ref{lm:!*incl}, 
\[\Psi_F(\cM)\stackrel{q.i.}{\simeq}\textup{cone}(\cM(!D_F^{(r)})\rightarrow \cM(*D_F^{(r)})).\]
By Proposition \ref{prop:!*gnbpbtopoint}, 
\[\bL\wt i_{\bal}^*\textup{cone}(\cM(!D_F^{(r)})\rightarrow \cM(!D_F^{(r)}))\stackrel{q.i.}{\simeq} \textup{cone}(\cM_\bal(!D)\to\cM_\bal(*D)).\]
Taking the relative de Rham functor, we then have that
\[
\DR_{X\times\C^r/\C^r}\bL\wt i_{\bal}^*\textup{cone}(\cM_\bal(!D_F^{(r)})\rightarrow \cM_\bal(!D_F^{(r)}))
\]
is quasi-isomorphic to 
\[ \sB^\bullet\coloneqq\textup{cone}(j_!(\DR(\cM)|_{X\setminus D}\otimes L_{\blamb})\to Rj_*(\DR(\cM)|_{X\setminus D}\otimes L_{\blamb})).\]
But 
\[\sB^\bullet\stackrel{q.i.}{\simeq}i_{D,*}i^{-1}_DRj_*(\DR(\cM)|_{X\setminus D}\otimes L_{\blamb}).\]
Thanks to \cite[Proposition 2.2]{FSO}, we have a natural isomorphism 
\[\DR_{X\times\C^r/\C^r}(\Psi_F(\cM))\stackrel{q.i.}{\simeq}\wt i_{D,*}\wt i_D^{-1}\DR_{X\times\C^r/\C^r}(\cM(*D_F^{(r)}))\]
where $\wt i_D=(i_D,\textup{id})\colon D\times\C^r\hookrightarrow X\times\C^r$ is the associated morphism. Since $\pi_*^G$ and $\wt i_{D,*}\wt i_D^{-1}$ commute, the proof of Theorem \ref{thm:RHALX} is accomplished by Theorem \ref{thm:RHj_*} and functoriality of sheafification.

\begin{proof}[Proof of Theorem \ref{thm:RHALXhigh}]
By Lemma \ref{lm:DReq} and Theorem \ref{thm:loccohsheq}, we have 
\[\begin{aligned}
&\pi_*^G\big(\DR_{X\times\C^r/\C^r}\D R\Gamma_{[D_I\times\C^r]}\big((\D\cM)(!D^{(r)}_F)\big)\big)\\
&\simeq \DR_{X\times(\C^*)^r/(\C^*)^r}\D R\Gamma_{[D_I\times(\C^*)^r]}\big(\pi_*^G\big((\D\cM)(!D^{(r)}_F)\big)\big).
\end{aligned}
\]
By \cite[Theorem 3.11]{FSO}, we have 
\[\begin{aligned}
&\DR_{X\times(\C^*)^r/(\C^*)^r}\D R\Gamma_{[D_I\times(\C^*)^r]}\big(\pi_*^G\big((\D\cM)(!D^{(r)}_F)\big)\big)\\
&\simeq \mathbf D\DR_{X\times(\C^*)^r/(\C^*)^r} R\Gamma_{[D_I\times(\C^*)^r]}\big(\pi_*^G\big((\D\cM)(!D^{(r)}_F)\big)\big),
\end{aligned}
\]
where $\mathbf D$ denotes the duality functor for relative constructible complexes (cf. \cite[\S 2.6]{FSO}).  By Proposition \ref{prop:!*gnbpbtopoint},  $(\D\cM)(!D^{(r)}_F)$ is relative regular holonomic (cf. \cite[Definition 2.1]{FS}) and thus so is $\pi_*^G\big((\D\cM)(!D^{(r)}_F)\big)$. By regularity and Lemma \ref{lm:loccohbc}, $\DR$ and $R\Gamma$ commute and thus we have 
\[\begin{aligned}
&\mathbf D\DR_{X\times(\C^*)^r/(\C^*)^r} R\Gamma_{[D_I\times(\C^*)^r]}\big(\pi_*^G\big((\D\cM)(!D^{(r)}_F)\big)\big)\\
&\simeq \mathbf D R\Gamma_{D_I\times(\C^*)^r}\DR_{X\times(\C^*)^r/(\C^*)^r} \big(\pi_*^G\big((\D\cM)(!D^{(r)}_F)\big)\big).
\end{aligned}
\]
By \eqref{eq:dualminmax}, we know $(\D\cM)(!D^{(r)}_F)\simeq \D(\cM(*D_F^{(r)}))$ and thus
\[\begin{aligned}
&\mathbf D R\Gamma_{D_I\times(\C^*)^r}\DR_{X\times(\C^*)^r/(\C^*)^r} \big(\pi_*^G\big((\D\cM)(!D^{(r)}_F)\big)\big)\\
&\simeq\mathbf D R\Gamma_{D_I\times(\C^*)^r}\DR_{X\times(\C^*)^r/(\C^*)^r} \big(\pi_*^G\D\big(\cM(*D_F^{(r)})\big)\big)\\
&\simeq \mathbf D R\Gamma_{D_I\times(\C^*)^r} \mathbf D\DR_{X\times(\C^*)^r/(\C^*)^r} \big( \pi_*^G\big(\cM(*D_F^{(r)})\big)\big),
\end{aligned}
\]
where in the second isomorphism above follows from \eqref{eq:dualequivdirct1} in Remark \ref{rmk:reldsteqdb}(2).
By  \cite[Proposition 3.1.11]{KSbook} and the construction of $\mathbf D$, we have 
\[\mathbf D R\Gamma_{D_I\times(\C^*)^r} \mathbf D\simeq \wt i_{D_I,*}\mathbf D{\tilde i}_{D_I}^!\mathbf D\]
where $\wt i_{D_I}=(i_{D_I},\textup{id})\colon D_I\times\C^r\hookrightarrow X\times\C^r$ is the associated morphism.
Moreover, by \cite[Proposition 3.1.13]{KSbook} (see also \cite[Remark 2.24]{FSO}), for relative constructible complexes we have a functorial isomorphism
\[\mathbf D{\tilde i}_{D_I}^!\mathbf D\simeq \wt i_{D_I}^{-1}.\]
Therefore,
\[\begin{aligned}
&\mathbf D R\Gamma_{D_I\times(\C^*)^r} \mathbf D\DR_{X\times(\C^*)^r/(\C^*)^r} \big( \pi_*^G\big(\cM(*D_F^{(r)})\big)\big)\\
&\simeq \wt i_{D_I,*}\wt i_{D_I}^{-1}\DR_{X\times(\C^*)^r/(\C^*)^r} \big( \pi_*^G\big(\cM(*D_F^{(r)})\big)\big).
\end{aligned}
\]
By Theorem \ref{thm:RHj_*} we finally have
\[
\wt i_{D_I,*}\wt i_{D_I}^{-1}\DR_{X\times(\C^*)^r/(\C^*)^r} \big( \pi_*^G\big(\cM(*D_F^{(r)})\big)\big)
\simeq \wt\psi_{D_I}(\DR_X(\cM)).
\]
\end{proof}

\begin{proof}[Proof of Corollary \ref{thm:suppRH}]
By Lemma \ref{lm:relconstrnakayam} and \cite[Corollary 1]{Mai} (or Lemma \ref{lm:suppan} in Appendix \ref{sec:shffm}), we have
\[\supp_{(\C^*)^r}\psi_F(\DR\cM)=\{\blamb\in(\C^*)^r\mid \bL\wt i^*_{\blamb}(\wt\psi_F(\DR\cM))\not=0\}.\]
Since $\bL \wt i^*_{\bal}$ and $\DR$ commute, by the Riemann-Hilbert correspondence for regular holonomic $\shD_X$-modules we have 
\[\{\bal\in\C^r\mid \bL \wt i^*_{\bal}(\DR_{X\times\C^r/\C^r}\Psi_F(\cM))\not=0\}=\{\bal\in\C^r\mid \bL \wt i^*_{\bal}(\Psi_F(\cM))\not=0\}\]
By Theorem \ref{thm:RHALX}, \[G\backslash\{\bal\in\C^r\mid \bL \wt i^*_{\bal}(\DR_{X\times\C^r/\C^r}\Psi_F(\cM))\not=0\}= \{\blamb\in(\C^*)^r\mid \bL\wt i^*_{\blamb}(\wt\psi_F(\DR\cM))\not=0\}.\]
Thus, it is enough to prove 
\[\supp_{\C^r}\Psi_F(\cM)=\{\bal\in\C^r\mid \bL \wt i^*_{\bal}(\Psi_F(\cM))\not=0\}.\]
But the equality above is the analytification of \cite[Proposition 3.4.3]{BVWZ} as $\Psi_F(\cM)$ is $(n+1)$-Cohen-Macaulay. We leave details for interested readers; see also \cite[\S 3.6]{BVWZ}.
\end{proof}

\subsection{Proof of the local index formula}
In this subsection, we prove the local index formula \eqref{eq:genericindzetaf} in \S \ref{subsection:rcczetaindex}. We keep notations as in \S \ref{subsection:rcczetaindex} and \S \ref{sec:*!ext}. We focus on a small neighborhood $W\subseteq X$ around $x$.

We write $\cF^\bullet=\DR(\cM)$ for simplicity and let $q$ be the generic point of an irreducible component of $\supp_{(\C^*)^r}\psi_F\cF^\bullet$. By Corollary \ref{cor:suppam}, $\bar q$ is always a translated subtori of codimension-one. We pick a general point $\blamb\in \bar q$ and a point $\bal\in Exp^{-1}(\blamb)$. Then for some suitable $i\in\{1,\dots,r\}$ we consider the closed embedding 
\[\delta_{\hat\blamb_i}\colon \C^*\hookrightarrow (\C^*)^r,\quad t\mapsto (\lambda_1,\dots,\lambda_{i-1},t,\lambda_{i+1},\dots,\lambda_r).\]
By pullback the quasi-isomorphism in Theorem \ref{thm:RHALX} and Corollary \ref{coro:basechageSF} we obtain:
\be\label{eq:pblambdageneonq}
\wt \delta_{\hat\blamb_i}^*\big(\pi_*^G\DR_{X\times\C/\C}\big(\Psi_{f_i}(\cM_{\hat\bal_i})\big)\big)\simeq \wt\psi_{f_i}(\cF_{\hat\bal_i}),
\ee
with $\cF_{\hat\bal_i}= \DR(\cM_{\hat\bal_i})$. By the definition of $\cM_{\hat\bal_i}$, \eqref{eq:pblambdageneonq} is independent of the choices of $\bal\in Exp^{-1}(\blamb)$.

We pick a sufficient small neighborhood $V_{\bal}$ of $\bal\in\C^r$ such that $Exp(V_{\bal})\simeq V_{\bal}$ (since $Exp$ is the universal covering) and write 
\[V_{\alpha_i}=\delta_{\hat\bal_i}^{-1}(V_{\bal}).\]
We then take restriction of \eqref{eq:pblambdageneonq} on $W\times Exp(V_{\alpha_i})$. Since $\Psi_{f_i}(\cM_{\hat\bal_i})|_{W\times V_{\alpha_i}}$ is only supported on $W\times\{\alpha_i\}$, we can treat $\Psi_{f_i}(\cM_{\hat\bal_i})|_{W\times V_{\alpha_i}}$ as a regular holonomic $\shD_X$-module (over W) and hence we get 
\be\label{eq:loc1dimspnearby}
\DR_X(\Psi_{f_i}(\cM_{\hat\bal_i})|_{W\times V_{\alpha_i}})\simeq \psi_{f_i}(\cF_{\hat\bal_i})_{m_{\alpha_i}}\ee
where $m_{\alpha_i}$ is the maximal ideal of $\alpha_i\in\C$.
\begin{lemma}\label{lm:genericpbEulerind}
With notations as above, we have
\[\chi_x(\psi_{F}(\cF^\bullet),q)=\chi_x(\psi_{F}(\cF^\bullet_{\hat\bal_{i}}),m_{\alpha_i}).\]
\end{lemma}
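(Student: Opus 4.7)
The plan is to compare the two sides via Theorem \ref{thm:RHALX} and Corollary \ref{coro:basechageSF}, exploiting the $(n+1)$-Cohen--Macaulayness of $\Psi_F(\cM)$ (Theorem \ref{thm:gnbGeq}) together with the very general choice of $\blamb \in \bar q$. First, I would translate the left-hand side: Theorem \ref{thm:RHALX} identifies $\wt{\psi_F(\cF^\bullet)}$ with $\pi_*^G\DR_{X\times\C^r/\C^r}(\Psi_F(\cM))$, and since $\blamb$ is very general on the codimension-one component $\bar q$, the generic stalk $\cH^i\psi_{F,q}(\cF^\bullet)|_x$ can be computed (via Lemma \ref{lm:relconstrnakayam} applied on a small polydisc $V_\bal$) in terms of $\cH^i\DR_{X\times\C^r/\C^r}(\Psi_F(\cM))|_{(x,\bal)}$ for any $\bal \in Exp^{-1}(\blamb)$, the relevant length being measured against the local ring of $\C^r$ at $\bal$ modulo the defining equation of the unique codimension-one component of $\supp_{\C^r}\Psi_F(\cM)$ passing through $\bal$.

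Next, I would invoke Corollary \ref{coro:basechageSF} together with the transversality argument from the proof of Proposition \ref{prop:lambdaLalf1dim}. By $(n+1)$-Cohen--Macaulayness of $\Psi_F(\cM)$ and the $r-1$-fold application of \cite[Proposition 2.7]{Wuch} through $r-1$ general hyperplanes meeting $\bal$, the embedding $\delta_{\hat\bal_i}$ is transverse at the very general $\bal$ to the unique codimension-one component of $\supp_{\C^r}\Psi_F(\cM)$ through $\bal$. Consequently, $\bL\wt\delta_{\hat\bal_i}^*\Psi_F(\cM) \simeq \Psi_{f_i}(\cM_{\hat\bal_i})$ sits in a single cohomological degree on a small enough $V_{\alpha_i}$, and its length at $\alpha_i$ as an $\sO_{\C,\alpha_i}$-module agrees with the generic length of $\Psi_F(\cM)$ at $\bar q$ (this is where $(n+1)$-Cohen--Macaulayness is essential to ensure the slice does not drop lengths).

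Finally, I would take $\DR$, which by the regularity of $\Psi_F(\cM)$ commutes with the base change along the slice, and use \eqref{eq:loc1dimspnearby} and \eqref{eq:pblambdageneonq} to relate everything back to $\psi_{f_i}(\cF^\bullet_{\hat\bal_i})$ localized at $m_{\alpha_i}$. Alternating sums of lengths of cohomology sheaf stalks at $x$ then yield the claimed identity. The main obstacle will be the clean bookkeeping of lengths across the several different local rings in play (the DVR $\C[G]_q$ on the topological side, the DVR $\C[t_i^{\pm}]_{m_{\alpha_i}}$ on the 1-dimensional slice, and the analytic local rings $\sO_{\C^r,\bal}$, $\sO_{\C,\alpha_i}$ used when passing through the sheafification). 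Matching these requires the induced ring map to be faithfully flat at the completions, which is precisely the analytic transversality that the very general choice of $\blamb$ guarantees --- provided that the defining equation $L \cdot \bs + \alpha$ of the component through $\bal$ has $l_i \neq 0$, an assumption implicit in choosing the index $i$ appropriately for a given $q$.
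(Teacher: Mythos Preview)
Your approach is correct in spirit but takes a substantial detour compared to the paper's one-line proof. The paper simply observes that the length function is additive on short exact sequences and that $\bal$ is chosen generally on $\bar q$; from these two facts the identity is immediate. Concretely, for each cohomology stalk $M_j=\cH^j(\psi_F(\cF^\bullet))_x$, one has the standard commutative algebra fact that the length of $(M_j)_q$ over the DVR $\C[G]_q$ equals the length of the derived restriction of $M_j$ to a curve through a general $\blamb\in\bar q$ transverse to $\bar q$ (d\'evissage via the $q$-adic filtration reduces to $M_j=\C[G]/q$, where both sides equal $1$ by transversality; additivity of the alternating sum absorbs any higher Tor terms). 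Since $\psi_F(\cF^\bullet)\otimes^{\bL}_{\C[G]}\C[t_i^{\pm}]\simeq\psi_{f_i}(\cF^\bullet_{\hat\bal_i})$ by the local freeness of $\psi^{\textup{univ}}$ over $\C[G]$, this gives the lemma directly on the topological side.

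Your route passes through Theorem~\ref{thm:RHALX}, Corollary~\ref{coro:basechageSF}, the $(n+1)$-Cohen--Macaulayness of $\Psi_F(\cM)$, and the characteristic-cycle transversality from Proposition~\ref{prop:lambdaLalf1dim}. This works, but each of those tools is doing on the $\shD$-module side what the elementary d\'evissage above does on the constructible side, and the ``clean bookkeeping of lengths'' you flag as the main obstacle is \emph{precisely} the commutative algebra that constitutes the paper's entire argument. In particular, your appeal to \cite[Proposition 2.7]{Wuch} controls characteristic cycles rather than stalkwise lengths of the de Rham complex, so you would still need to supply the length comparison separately. The payoff of your approach is that it makes explicit the link to the $\shD$-module nearby cycle (useful elsewhere in \S\ref{subsection:rcczetaindex}), whereas the paper's argument stays purely on the topological side and is essentially content-free once one knows length is additive.
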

\begin{proof}
Since the length function is additive with respect to short exact sequence, the required identity follows from the very general choice of $\bal$ in $\bar q$.
\end{proof}

We now apply \cite[Index Theorem 8.2]{Gil} to the regular holonomic $\shD_X$-module $\Psi_{f_i}(\cM_{\hat\bal_i})|_{V_{\alpha_i}}$. In consequence, the formula \eqref{eq:genericindzetaf} follows from 
\eqref{eq:loc1dimspnearby}, Lemma \ref{lm:genericpbEulerind} and Proposition \ref{prop:lambdaLalf1dim}.

\section{Appendix: relative sheafification}\label{sec:shffm}
We discuss sheafifying sheaves of modules over commutative rings in a standard way, motivated by the study of relative holonomic $\shD$-modules over algebraic  affine spaces in \cite{Mai} and the theory of analytic relative holonomic $\shD$-modules developed in \cite{FSO,FS, FF18, FFS19}, which can be seen as the relative version of the $\sim$-functor in \cite[II.5]{HartsAG}. Compared to the references above, we mainly and more focus on studying algebraic and analytic relative supports and their differences.
\subsection{Pre-sheafification}
Let $k$ be a field, and let $X$ be topological spaces with $\sA$ a sheaf of $k$-algebra on $X$ for some base field $k$. For a commutative $k$-algebra $R$, we obtain a new sheaf of rings
\[\sA_R=\sA\otimes_k R.\]
We write by $\Mod(\sA_R)$ the abelian category of $\sA_R$-modules on $X$.\footnote{Throughout this paper, all the modules over non-commutative rings are assumed to be the left ones, unless indicated otherwise.} For $\sM\in \Mod(\sA_R)$, we define a presheaf on $X\times \Spec R$ (use the product topology) by 
\[\widetilde\sM^{\textup{pre}}(U\times \Spec R_f)=\sM(U)\otimes_R R_f\]
where $R_f$ is the localization of $R$ with respect to $f\in R$. Since $\Spec R_f$ give a basis of the Zariski topology of $\Spec R$, the presheaf above gives a sheaf on $X\times \Spec R$, denoted by $\widetilde\sM^{\textup{pre}}$. We call such procedure the \emph{relative pre-sheafification} of $\sM$.
Consequently, $\widetilde\sM^{\textup{pre}}$ is an $\wt\sA_R^{\textup{pre}}$-module on $X\times \Spec R$. One can see that when $X$ is a point, the relative pre-sheafification is the $\sim$-functor in algebraic geometry transforming $R$-modules into quasi-coherent sheaves on $\Spec R$.

\begin{lemma}\label{lm:stalkpre}
The stalk of $\wt\sM^{\textup{pre}}$ at $(x,m)\in X\times \Spec R$ satisfies 
\[\wt\sM^{\textup{pre}}_{(x,m)}=\sM_{x}\otimes_R R_m=(\sM\otimes_R R_m)_x\]
where $\sM_x$ is the stalk of $\sM$ at $x$ and similarly for $(\sM\otimes_R R_m)_x$.
\end{lemma}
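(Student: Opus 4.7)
The plan is to express the stalk directly as a colimit over a convenient cofinal system of open neighborhoods of $(x,m)$, and then interchange the tensor product with that colimit.

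First, I would restrict attention to the cofinal system of basic open neighborhoods of $(x,m)$ of the form $U \times \Spec R_f$ with $x \in U$ and $f \in R \setminus m$ (equivalently $m \in \Spec R_f$). In the product topology, every open neighborhood of $(x,m)$ contains such a basic open, since the sets $\Spec R_f$ form a basis of the Zariski topology and the collection of open neighborhoods $U$ of $x$ is directed under inclusion. Because the stalk of a sheaf is computed as a filtered colimit over any cofinal system of open neighborhoods, this yields
\[\wt\sM^{\textup{pre}}_{(x,m)} \;=\; \varinjlim_{\substack{x \in U \\ f \notin m}} \sM(U) \otimes_R R_f.\]

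Second, I would split this bi-indexed filtered colimit. The index category is the product of the directed system of open $U \ni x$ and the directed system of principal opens $R_f$ with $f \notin m$ (where $(U,f) \le (U',f')$ iff $U' \subseteq U$ and $R_f \subseteq R_{f'}$), and both coordinate projections are cofinal. Using that filtered colimits commute with $\otimes_R$, I obtain
\[\varinjlim_{U,f} \bigl(\sM(U) \otimes_R R_f\bigr) \;\simeq\; \Bigl(\varinjlim_U \sM(U)\Bigr) \otimes_R \Bigl(\varinjlim_{f \notin m} R_f\Bigr) \;=\; \sM_x \otimes_R R_m,\]
using the standard presentation of $R_m$ as the colimit of its principal localizations away from $m$. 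This gives the first equality of the lemma.

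For the second equality $\sM_x \otimes_R R_m = (\sM \otimes_R R_m)_x$, I would apply the same principle in reverse: the sheaf $\sM \otimes_R R_m$ is the sheafification of the presheaf $U \mapsto \sM(U) \otimes_R R_m$, and sheafification does not change stalks. Taking the stalk at $x$ and commuting the resulting filtered colimit over $U \ni x$ with $\otimes_R R_m$ yields $\sM_x \otimes_R R_m$. No serious obstacle is anticipated; the argument is a routine manipulation of filtered colimits. The only point requiring some care is the cofinality of the basis $\{U \times \Spec R_f\}$ inside the category of all open neighborhoods of $(x,m)$, together with the legitimacy of factoring the bi-indexed filtered colimit along each coordinate, both of which are standard.
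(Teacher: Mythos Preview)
Your proposal is correct and takes essentially the same approach as the paper: the paper's proof is a single sentence invoking that localizations are colimits and citing the Stacks Project lemma on commuting colimits, which is exactly the mechanism you spell out in detail.
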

\begin{proof}
Since localizations are colimits, we apply \cite[Part 1, Lemma 4.14.10]{stacks-project}.
\end{proof}

Let $I\subseteq R$ be an ideal. The \emph{relative pullback} of $\sM$ to $\Spec R/I$ is defined as 
\[\tilde i^*_{\textup{pre}}(\wt\sM^{\textup{pre}})=\wt{\sM\otimes_R R/I}^{\textup{pre}}\]
where $\tilde i\colon X\times \Spec R/I\hookrightarrow X\times \Spec R$ is the closed embedding induced by $R\to R/I$. More generally, if $h\colon \Spec S\to \Spec R$ is a morphism of affine schemes, then one can define the \emph{relative pullback functor} $\tilde h^*_{\textup{pre}}$ similarly. If $m\subseteq R$ is a maximal ideal, the pullback of $\sM$ at the closed point of $m$ gives a $\sA$-module on $X$. Hence, $\wt\sM$ is also called a relative $\sA$-module over $\Spec R$.

Let $\mathfrak p \subseteq R$ be a prime ideal. Then the \emph{specialization} of $\sM$ at $\mathfrak p$ is 
\[\sM_{\mfp}=\sM\otimes_R R_{\mfp}\]
where $R_{\mfp}$ is the localization of $R$ at $\mfp$, which is a $\sA_{R_{\mfp}}$-module on $X$. The \emph{complete specialization} of $\sM$ at $p$ is
\[\hat\sM_{\mfp}=\sM\otimes_R \hat R_{\mfp}\]
where $\hat R_{\mfp}$ is the completion of $R_{\mfp}$ with respect to $\mfp$. Then $\hat\sM_{\mfp}$ is an $\hat\sA_{R_{\mfp}}$-module.
\begin{definition}\label{def:relsupp}
For $\sM\in \Mod(\sA_R)$, the relative support of $\sM$ is 
\[\supp_{\Spec R}\sM=\{\textup{maximal ideals }m\in \Spec R\mid \sM_m\not=0\}.\]
More generally, if $\sM^\bullet\in D^b(\sA_R)$, the bounded derived category of $\Mod(\sA_R)$, then we define
\[\supp_{\Spec R}\sM^\bullet=\bigcup_i \supp_{\Spec R}\cH^i(\sM^\bullet).\]
\end{definition}

\subsection{Analytic sheafification}\label{subsec:ansh}
We now assume $X$ a complex manifold, $\sA$ a sheaf of $\C$-algebra and $R$ a commutative $\C$-algebra of finite type. We write $\Specan R$ the analytic scheme of $\Spec R$.

For $\sM\in \Mod(\sA_R)$, the analytic sheafification of $\sM$ is defined as 
\[\wt\sM=\iota^{-1}(\wt\sM^{\textup{pre}})\otimes_{\wt{pr}^{-1}_X\sO_{\Spec R}} pr_X^{-1}\sO_{\Specan R},\]
where $\iota: X\times \Specan R\to X\times \Spec R$ is the natural inclusion map induced by $\iota_R\colon \Specan R\to \Spec R$ from GAGA principle, $pr_X\colon X\times \Specan R\to \Specan R$ is the projection and $\wt{pr}_X$ is the composition of $pr_X$ and $\iota_R$. The output $\wt\sM$ gives a $\wt\sA_R$-module on $X\times\Specan R$. 

If $X$ is an algebraic scheme over $k$ and $R$ a commutative $k$-algebra, then we can define the algebraic sheafification of $\sM\in \Mod(\sA_R)$ by 
\[\wt\sM^{\textup{alg}}=\iota^{-1}_{\textup{alg}}(\wt\sM^{\textup{pre}})\]
where $\iota^{-1}_{\textup{alg}}: X_R\to X\times\Spec R$ is the natural inclusion map with $X_R$ the scheme of $X$ over $R$. 

\subsection{Analytic sheafification for $\shD_{X,R}$-module and relative holonomicity}\label{subsec:ashfdmo}
Now we assume $\sA=\shD_X$, the sheaf of holomorphic differential operators on a complex manifold $X$, and $R$ a commutative finite generated $\C$-algebra. In this case, we also want the analytic sheafification to be consistent with the analytic structure sheaf of $X\times \Specan R$ (not only with that of the second factor). For instance, the sheafification of an $\sO_{X,R}$-module should be its GAGA analytification. 
Thus, for $\sM\in \Mod(\shD_{X,R})$ (or $\sM\in\Mod(\sO_{X,R}$)) we define 
\[\wt\sM=\iota^{-1}(\wt\sM^{\textup{pre}})\otimes_{\iota^{-1}(\wt\sO_{X,R}^{\textup{pre}})} \sO_{X\times \Specan R}.\]
Therefore, the output $\wt\sM$ in this case gives a $\wt\shD_{X,R}=\shD_{X\times\Specan R/\Specan R}$-module, i.e. a relative analytic $\shD$-module. Let us refer to \cite[Chapter III. 1.3]{Schpbook} and \cite[\S 3]{FSO} for the general theory of relative analytic $\shD$-modules; see also \cite[\S 2]{Wuch}). The relative pullback for $\wt\sM$ in this case is exactly the base change for relative $\shD$-modules \cite[\S 2.2]{Wuch}. 

\begin{remark}\label{rmk:algshd}
When $X$ is an smooth complex algebraic variety and $\sM$ is an algebraic $\shD_{X,R}$-module, we define the algebraic sheafification by
\[\wt\sM^{\textup{alg}}=\iota^{-1}_{\textup{alg}}(\wt\sM^{\textup{pre}})\otimes_{\iota^{-1}(\wt\sO_{X,R}^{\textup{pre}})} \sO_{X_R},\]
which gives an algebraic relative $\shD$-module over $\Spec R$. In this case, the functor $\sim_{\textup{alg}}$ has a quasi-inverse $p_*$: 
\[p_*(\wt\sM^{\textup{alg}})\simeq \sM,\]
where $p\colon X_R\to X$ is the natural projection. 
\end{remark}

\begin{definition}\label{def:relcohhol}
For $\sM\in \Mod(\shD_{X,R})$, we say $\sM$ relative coherent over $\Specan R$ if 
    $\wt\sM$ is coherent over $\wt\shD_{X,R}$.
We say $\sM$ $($or $\wt\sM\in\Mod(\wt\shD_{X,R}))$ relative holonomic if it is relative coherent and locally on a relatively compact open subset $W$ of $X$
\[\Ch^{\rel}(\wt\sM|_{W\times \Specan R})\subseteq \Lambda\times \Specan R,\]
where $\Lambda$ is a $($possibly reducible$)$ conic Lagrangian subvariety inside the cotangent bundle $T^*X$ $($over W$)$ and $\Ch^{\rel}(\wt\sM)$ is the relative characteristic variety.
\end{definition}
Notice that coherence over $\shD_{X,R}$ on $X$ implies relative coherence, but not vice versa. The relative holonomicity above is the analytic sheafification of the algebraic relative holonomicity \cite[Definition 3.2.3]{BVWZ}, which in turn follows the analytic relative holonomicity defined in \cite[\S 3.4]{FSO}. By definition, the category of relative holonomic modules is abelian. 

The following lemma is well-known; see for instance \cite[Proposition 8]{Mai}, \cite[Lemma 2.10]{FF18}\footnote{Lemma 2.10 in \cite{FF18} seems to be not completely correct since locally on $X$ the index set $I$ can be infinite by Theorem \ref{thm:relccgny}.} and also \cite[Proposition 3.2.5]{BVWZ}. 
\begin{lemma}\label{lm:altrelhol}
If $\wt\sM\in\Mod(\wt\shD_{X,D})$ is relative holonomic, then for every pair $(W,V)$ with  $W$ a relatively compact open subset of $X$ and $V$ a relatively compact open subset of $\Specan R$, there exist a finite number of closed analytic subvarieties $S_w\subseteq V$ such that 
\[\Ch^\rel(\wt\sM|_{W\times V})=\bigcup_w \Lambda_w\times S_w,\]
where $\Lambda_w$ are irreducible conic Lagrangian subvarieties in $T^*X|_W$. If moreover $\sM\in \Mod(\shD_{X,R})$ is coherent over $\shD_{X,R}$ and relative holonomic over $\Specan R$, then \[\Ch^\rel(\wt\sM|_{W\times \Specan R})=\bigcup_w \Lambda_w\times S_w\]
with a finite number of closed analytic subvarieties $S_w\subseteq \Specan R$.
\end{lemma}
\begin{proof}
For completeness, we present here a proof essentially due to Maisonobe \cite{Mai}. We write 
\[\Ch^\rel(\wt\sM|_{W\times\Specan R})\subseteq \Lambda\times \Specan R\]
with $\Lambda=\sum_w \Lambda_w$ a finite union of irreducible conic Lagrangian subvarieties in $T^*X|_W$. 
Notice that the inclusion above does not imply that $\Ch^\rel(\wt\sM|_{W\times\Specan R})$ has finite many irreducible components. 
For every $p\in V$, by the relative Bernstein inequality (see \cite[Proposition 5]{Mai} and also \cite[Theorem 2.2]{Wuch}), we have 
\[\Ch^\rel(\wt\sM)_p\coloneqq\Ch^\rel(\wt\sM)\cap pr^{-1}(p)\subset \sum_w \Lambda_w,\]
where $pr\colon T^*X\times \Specan R\to \Specan R$ is the second projection. Then we define 
$$S_w=\{p\in V \mid \Lambda_w\subseteq \Ch^\rel(\wt\sM)_p \},$$
from which we obtain 
\[\Ch^\rel(\wt\sM|_{W\times V})=\bigcup_w \Lambda_w\times S_w.\]
The set $S_w$ is analytic and closed since 
\[\{\lambda_w\}\times S_w=\Ch^\rel(\wt\sM|_{W\times V})\cap pr_1^{-1}(\lambda_w)\]
with $\lambda_w$ a general point on $\Lambda_w$, where $pr_1\colon T^*X\times \Specan R\to T^*X$ is the first projection.

If additionally $\sM$ is coherent over $\shD_{X,R}$, we can define 
$$S_w=\{p\in \Specan R \mid \Lambda_w\subseteq \Ch^\rel(\wt\sM)_p \}.$$
Since $\Ch^\rel(\wt\sM|_{W\times\Specan R})$ is a closed analytic subvariety of $T^*W\times \Specan R$, we conclude that $S_w\subseteq \Specan R$ is analytic and closed.  \end{proof}

For a maximal ideal $m\in \Spec R$, we write $p_m=\iota^{-1}_R(m)\in \Specan R$. For $(x,p)\in X\times\Specan R$, $\wt\sM_{(x,p)}$ denotes the (analytic) localization of $\wt\sM$ at $(x,p)$.
\begin{lemma}\label{lm:suppan}
If $\sM$ is relative coherent over $\Specan R$, then 
\[\supp_{\Spec R}\sM=\{m\in \Spec R\mid \wt\sM_{(x,p_m)}\not=0 \textup{ for some $x\in X$}\}.\]
\end{lemma}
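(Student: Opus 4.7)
The plan is to reduce the claim to a stalk-level equivalence at each $(x,p_m)\in X\times \Specan R$ and then apply faithful flatness of the standard ring maps that arise in the definition of the analytic sheafification. Combining Lemma \ref{lm:stalkpre} with the definition of $\wt\sM$ as $\iota^{-1}(\wt\sM^{\textup{pre}})\otimes_{\iota^{-1}(\wt\sO_{X,R}^{\textup{pre}})}\sO_{X\times\Specan R}$, one computes
\[
\wt\sM_{(x,p_m)} \;=\; (\sM_x\otimes_R R_m)\otimes_{\sO_{X,x}\otimes_\C R_m}\sO_{X\times\Specan R,(x,p_m)},
\]
so the stalks of $\wt\sM$ are base changes of the stalks of the specialization $\sM_m$.

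From this formula the inclusion ``$\supseteq$'' is immediate: a nonzero $\wt\sM_{(x,p_m)}$ forces $(\sM_m)_x=\sM_x\otimes_R R_m\neq 0$, hence $\sM_m\neq 0$ and $m\in \supp_{\Spec R}\sM$. This direction uses neither coherence nor flatness. For the reverse inclusion, fix $m\in \supp_{\Spec R}\sM$ and choose $x\in X$ with $(\sM_m)_x\neq 0$; it then suffices to show that the ring map $\sO_{X,x}\otimes_\C R_m\to \sO_{X\times\Specan R,(x,p_m)}$ is faithfully flat, since faithful flatness preserves non-vanishing under tensor product.

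I would establish this faithful flatness by factoring the map through $\sO_{X,x}\otimes_\C\sO_{\Specan R,p_m}$: the first step $\sO_{X,x}\otimes_\C R_m\to \sO_{X,x}\otimes_\C \sO_{\Specan R,p_m}$ is the base change of the classical faithfully flat comparison $R_m\to \sO_{\Specan R,p_m}$ between the algebraic and analytic localizations of an affine $\C$-scheme of finite type at a closed point, and the second step is the inclusion of the algebraic tensor product of factor local rings into the analytic local ring of a product, which is faithfully flat --- in the smooth case this is the standard statement that $\C\{x_1,\dots,x_n,y_1,\dots,y_m\}$ is faithfully flat over $\C\{x\}\otimes_\C\C\{y\}$, and the general case reduces to it by embedding $\Specan R$ locally in an affine space and quotienting by the defining ideal. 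The main obstacle is invoking these two faithful flatness facts cleanly; relative coherence of $\sM$ is not used in this argument but rather provides the natural setting in which the lemma is meant to be applied (guaranteeing, for example via Lemma \ref{lm:altrelhol}, that the resulting support is a closed analytic subset of $\Specan R$).
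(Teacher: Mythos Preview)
Your argument is correct and arrives at the same conclusion, but by a different route than the paper. The paper does not attempt to prove directly that $\sO_{X,x}\otimes_\C R_m\to \sO_{X\times\Specan R,(x,p_m)}$ is faithfully flat. Instead it passes to the common formal completion $\C[[\mathbf x,\mathbf y]]$: both $(\sM_m)_x$ and $\wt\sM_{(x,p_m)}$ are shown to have the same base change to $\C[[\mathbf x,\mathbf y]]$, and faithful flatness of completion over the Noetherian local rings $\C\{\mathbf x\}$, $R_m$, and $\C\{\mathbf x,\mathbf y\}$ is invoked separately on each side. Your approach is more direct and conceptually cleaner, but the second step of your factorization --- faithful flatness of $\C\{\mathbf x,\mathbf y\}$ over $\C\{\mathbf x\}\otimes_\C\C\{\mathbf y\}$ --- is less elementary than you suggest: the source ring is not Noetherian, so the usual completion argument does not apply verbatim, and one really needs a flatness result for analytic tensor products (true, but not quite ``standard''). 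The paper's detour through formal power series sidesteps this by only ever invoking faithful flatness over Noetherian local rings. Your observation that relative coherence is not actually used in the argument is accurate and matches the paper's proof, which also makes no use of it.
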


\begin{proof}
We first assume $R=\C[{\bf y}]$ for ${\bf y}=(y_1,\dots,y_l)$ for some integral $l>0$. Since completion of local rings is faithfully flat, we have
\[\hat\sM_m\not=0 \Leftrightarrow \sM_m\not=0.\]
By definition, we have
\[\hat\sM_m\not=0 \Leftrightarrow (\hat\sM_m)_x \textup{ for some }x\in X,\]
where $(\hat\sM_m)_x$ is the localization of $\hat\sM_m$ at $x$ because $\hat\sM_m$ is a sheaf on $X$. We write by $\C\{{\bf x}\}$ the stalk of $\sO_X$ at $x$, and by $\C[[{\bf x}]]$ the ring of formal power series. Since $\C[[{\bf x}]]$ is faithfully flat over $\C\{{\bf x}\}$, we know 
\[(\hat\sM_m)_x\not=0 \Leftrightarrow (\hat\sM_m)_x\otimes_{\C\{{\bf x}\}}\C[[{\bf x}]] \not=0.\]
We also have 
\[(\hat\sM_m)_x\otimes_{\C\{{\bf x}\}}\C[[{\bf x}]]\simeq \sM_x\otimes_{\C\{{\bf x}\}\otimes_\C R} \C[[{\bf x}]][[{\bf y}]].\]
Thanks to the fact that the completion of local rings is faithfully flat again, we have 
\[\wt\sM_{(x,p_m)}\not=0 \Leftrightarrow \wt\sM_{(x,p_m)}\otimes_{\C\{{\bf x,y}\}}\C[[{\bf x,y}]] \not=0.\]
By Lemma \ref{lm:stalkpre}, we have 
\[\sM_x\otimes_{\C\{{\bf x}\}\otimes_\C R} \C[[{\bf x}]][[{\bf y}]]\simeq \wt\sM_{(x,p_m)}\otimes_{\C\{{\bf x,y}\}}\C[[{\bf x,y}]].\]
Therefore, we conclude 
\[\sM_m\not=0 \Leftrightarrow\wt\sM_{(x,p_m)}\not=0 \textup{ for some }x\in X.\]
In general, considering a surjection 
\[\C[{\bf y}]\twoheadrightarrow R,\]
we are done by similar arguments. 
\end{proof}
\begin{prop-def}\label{prop:suppprch}\label{prop-def:relsupp}
Let $\sM\in \Mod(\shD_{X,R})$ be relative coherent over $\Specan R$. 
Then 
\[\supp_{\Specan R}\sM\coloneqq\iota_R^{-1}(\supp_{\Spec R}\sM)=\supp_{\Specan R}\wt\sM\coloneqq pr(\Ch^\rel(\wt\sM)),\]
where $pr\colon T^*X\times \Specan R\to \Specan R$ is the projection. 
\end{prop-def}
\begin{proof}
We focus ourselves on the characteristic variety
\[\Ch^\rel(\wt\sM)\subseteq T^*X\times \Specan R\]
locally over $W\times V$ by picking a good filtration $F_\bullet \wt\sM$. Then $\Ch^\rel(\wt\sM)$ is the support of $\wt\gr^F_\bullet\wt\sM$ inside $T^*X\times \Specan V$, which means $\Ch^\rel(\wt\sM)$ is a closed analytic subvariety over $W\times V$. We observe by coherence
\[\wt\sM_{(x,p)}\not=0 \Leftrightarrow (\wt\gr^F_\bullet\wt\sM)_{(x,\xi,p)}\not=0 \textup{ for some $\xi\in T^*_xX$}.\]
Thus, we have 
\[\wt\sM_{(x,p)}\not=0 \Leftrightarrow (x,\xi,p)\in \Ch^\rel(\wt\sM) \textup{ for some $\xi\in T^*_xX$}.\]
By Lemma \ref{lm:suppan}, we hence conclude 
\[\iota_R^{-1}(\supp_{\Spec R}\sM)=pr(\Ch^\rel(\wt\sM))\]
locally over $W\times V$. 
\end{proof}

\subsection{Bernstein-Sato ideal}\label{def:bsideal}
If $\sM\in\Mod(\shD_{X,R})$ is coherent over $\shD_{X,R}$, then we define 
\[B(\sM)=\Ann_{R}(\sM)\subseteq R,\]
calling it the Bernstein-Sato ideal of $\sM$. We write by $Z(B(\sM))\subseteq \Spec R$ the zero locus of $B(\sM)$. A priori, since $\sM$ is not finite generated over $R$ in general, we only have 
$$\supp_{\Spec R}\sM\subseteq Z(B(\sM)),$$
but we do not know if $\supp_{\Spec R}\sM$ is an algebraic subvariety.

\begin{prop}\label{prop:zbfsupp=}
If $\sM\in \Mod(\shD_{X,R})$ is coherent over $\shD_{X,R}$ and relative holonomic over $\Specan R$, then locally on a relatively compact open subset $W\subseteq X$ we have 
\[\supp_{\Spec R}\sM|_W= Z(B(\sM|_W)).\]
\end{prop}
\begin{proof}
For simplicity, we assume $X=W$.
By \cite[Proposition 9]{Mai} (see also \cite[Lemma 3.4.1]{BVWZ}), we have 
\[Z(B(\sM))=pr(\Ch^\rel(\wt\sM)).\]
Then the required equality follows from Proposition-Definition \ref{prop-def:relsupp}.
\end{proof}

By the proposition above and Proposition \ref{lm:altrelhol}, we have:
\begin{coro}\label{cor:algdecompofchrel}
If $\sM\in \Mod(\shD_{X,R})$ is coherent over $\shD_{X,R}$ and relative holonomic over $\Specan R$, then over a relatively compact open subset $W\subseteq X$ \[\Ch^\rel(\wt\sM|_{W\times \Specan R})=\bigcup_w \Lambda_w\times S_w\]
with a finite number of closed algebraic subvarieties $S_w\subseteq \Spec R$.
\end{coro}

Applying Proposition-Definition \ref{prop-def:relsupp}, Lemma \ref{lm:altrelhol} and Proposition \ref{prop:zbfsupp=}, we have: 
\begin{coro}\label{cor:suppanalgsubvar}
Let $\sM\in \Mod(\shD_{X,R})$ be relative holonomic over $\Specan R$. Then
\begin{enumerate}
   \item $\supp_{\Specan R}\sM$ is analytically closed locally over both $X$ and $\Specan R$, i.e. over each open subset $W\times V$ satisfying $W\subseteq X$ a relatively compact open subset and $V\subseteq \Specan R$ a relatively compact open subset, 
    \[\supp_{\Specan R}(\sM|_W) \cap V\subseteq \Specan R \]
   is a closed analytic subvariety.
   \item if $\sM$ is also coherent over $\shD_{X,R}$, then  
$\supp_{\Spec R}\sM$ is algebraically closed locally over $X$, i.e. over a relatively compact open subset $W\subseteq X$ 
\[\supp_{\Spec R}(\sM|_W) \subseteq \Spec R \]
is a closed algebraic subvariety.
\end{enumerate}
\end{coro}

\subsection{Sheafification for derived category}
By construction, the analytic sheafification is a faithful and exact functor from $\Mod(\sA_R)$ to $\Mod(\wt\sA_R)$ for $\sA$ in general (similarly for the algebraic sheafification). We define $\wt\Mod(\sA_R)$ as the subabelian category of $\Mod(\wt\sA_R)$ as the image of $\Mod(\wt\sA_R)$ under the analytic sheafification. We write by $\wt{D^b}(\sA_R)$ the derived category of $\wt\Mod(\sA_R)$ and by $D^b(\wt\sA_R)$ the derived category of $\Mod(\wt\sA_R)$. 
We can define the \emph{analytic relative pullback} functor $\wt h^*$ for an affine morphism $h:\Spec S\to\Spec R$ in an obvious way. We denote by $\bL \wt h^*$ its derived functor. By slightly abuse of notations, we use $\wt h^*$ to denote the functor for both  $\wt\Mod(\sA_R)$ and $\Mod(\wt\sA_R)$
and similarly for $\bL\wt h^*$ in the derived case.

For $\wt\shD_{X,R}$-module, if $\mu\colon T\to \Specan R$ is a morphism between analytic schemes, we use $\wt u^*$ (resp. $\bL\wt u^*$) to denote the (resp. derived) pullback functor of the natural morphism $\wt\mu\colon X\times T\to X\times \Specan R$ induced by base-change (see also \cite[\S2.2]{Wuch}). 

\subsection{Sheaves of algebraic local cohomology} Let $X$ be a complex manifold and let $R$ be a commutative $\C$-algebra of finite type. 
For $Z\subseteq X$ an analytic subvariety, we use $R\Gamma_{[Z]}$ to denote the right derived functor of sheaves of algebraic local cohomology along $Z$ (cf. \cite[Chapter II.5]{Bj}). We denote by $\wt Z=Z\times \Specan R$. For $\sM\in \Mod(\shD_{X,R})$, by construction we have a natural isomorphism
\[\wt{R\Gamma}_{[Z]}(\sM)\simeq R\Gamma_{[\wt Z]}(\wt\sM).\]

The following lemma is standard and we leave its proof for interested readers. 
\begin{lemma}\label{lm:loccohbc}
Let $h\colon \Spec S\to\Spec R$ be an affine morphism. Then for $\wt \sM\in \Mod(\wt \shD_{X,R})$ we have a natural isomorphism 
\[\bL \wt h^*R\Gamma_{[\wt Z]}(\wt\sM)\simeq R\Gamma_{[Z\times \Specan S]}(\bL \wt h^*\wt\sM).\]
\end{lemma}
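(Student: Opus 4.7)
The plan is to reduce the statement, locally on $X$, to the termwise commutation of the analytic relative pullback $\bL\wt h^*$ with localizations along holomorphic functions pulled back from $X$, using the Cech--Koszul presentation of algebraic local cohomology. Since the claim is local on $X$, I pick a small open $U\subset X$ on which $Z$ is cut out by finitely many holomorphic functions $g_1,\dots,g_k\in\sO_X(U)$; pulled back along $pr_X$, these same $g_i$ cut out $\wt Z$ inside $U\times\Specan R$ and $Z\times\Specan S$ inside $U\times\Specan S$. For any $\sO$-module $\sN$ on $U\times\Specan R$ I would then use the standard Björk-style presentation (cf.\ \cite{Bj}, Chapter II.5)
\[
R\Gamma_{[\wt Z]}(\sN) \;\simeq\; \mathrm{Cone}\bigl(\sN\to \check C^\bullet(g_1,\dots,g_k;\sN)\bigr)[-1],
\]
where $\check C^\bullet(g;-)$ denotes the Cech complex with terms $\bigoplus \sN[(g_{i_0}\cdots g_{i_p})^{-1}]$, and analogously for $R\Gamma_{[Z\times\Specan S]}$.

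Granting this presentation, the statement reduces to the termwise identification $\bL\wt h^*(\wt\sM[g^{-1}])\simeq (\bL\wt h^*\wt\sM)[g^{-1}]$ for every $g$ pulled back from $X$. This is essentially a flat base change assertion: the localization $(-)[g^{-1}]$ is exact and acts purely along the $\sO_X$-direction (it is a filtered colimit of multiplications by $g$), whereas $\bL\wt h^*$ is a derived tensor against $pr_{\Specan S}^{-1}\sO_{\Specan S}$ over $pr_{\Specan R}^{-1}\sO_{\Specan R}$ along the independent $\Specan R\to\Specan S$ direction; hence the two functors commute. Concretely I would take a termwise $pr_{\Specan R}^{-1}\sO_{\Specan R}$-flat resolution $\sP^\bullet\to\wt\sM$, apply $(-)[g^{-1}]$ termwise (which preserves flatness over $pr_{\Specan R}^{-1}\sO_{\Specan R}$ since it only acts on the other factor), and verify that tensoring with $pr_{\Specan S}^{-1}\sO_{\Specan S}$ yields the same complex whichever order one proceeds in.

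Applying $\bL\wt h^*$ termwise to the Cech complex computing $R\Gamma_{[\wt Z]}(\wt\sM)$ and invoking the above commutation will identify it with the Cech complex computing $R\Gamma_{[Z\times\Specan S]}(\bL\wt h^*\wt\sM)$, yielding the required natural quasi-isomorphism; naturality in $\wt\sM$ and in $h$ follows from the construction. The main technical obstacle will be the careful handling of $\bL\wt h^*$ as a derived functor: one must ensure that a chosen flat resolution of $\wt\sM$ remains compatible, after the analytic localizations $(-)[g^{-1}]$ and the passage to the filtered colimit defining them, with the derived tensor used to compute $\bL\wt h^*$, so that the resulting map of Cech complexes is a genuine quasi-isomorphism rather than only a morphism in $D^b(\wt\shD_{X,S})$. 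This is standard but must be spelled out because the Cech terms are infinitely generated over the base.
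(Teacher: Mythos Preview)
The paper does not give a proof of this lemma; it states that ``the following lemma is standard and we leave its proof for interested readers.'' Your argument is exactly the standard one and is correct: the \v{C}ech/Koszul presentation of $R\Gamma_{[\wt Z]}$ in terms of localizations $\wt\sM[g^{-1}]$ along functions pulled back from $X$, together with the observation that such localizations commute with the derived base change $\bL\wt h^*$ (since the two operations act in independent directions and $(-)[g^{-1}]$ is an exact filtered colimit), gives the natural isomorphism. The technical caveat you flag about handling the derived tensor product via a flat resolution compatible with filtered colimits is genuine but routine, and your sketch already indicates how to discharge it.
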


\subsection{Duality for relative coherent $\shD$-module}
We keep notation as in \S\ref{subsec:ashfdmo}. We assume $\dim_\C X=n$. For $\sM\in\Mod(\shD_{X,R})$, if $\sM$ is coherent over $\shD_{X,R}$, then we define
\[\D_R(\sM)=\Rhom_{\shD_{X,R}}(\sM,\shD_{X,R})\otimes_\sO \omega^{-1}_{X}[n]\in D^b(\shD_{X,R}),\]
where $\omega_X$ is the sheaf of holomorphic $n$-forms on $X$.
If $\wt\sM$ is relative coherent over $\Specan R$, then we define
\[\D_{\Specan R}(\wt\sM)=\Rhom_{\wt\shD_{X,R}}(\wt\sM,\wt\shD_{X,R})\otimes_{\wt\sO} \wt{\omega^{-1}_X\otimes_\C R}[n]\in D^b(\wt\shD_{X,R}).\]
Notice that by construction, 
\[\wt{\omega^{-1}_X\otimes_\C R}\simeq pr_X^*\omega^{-1}_X,\]
where $pr_X\colon X\times\Specan R\to X$ is the projection. When $R=\C$, $\D_\C(\bullet)$ is just the usual duality functor for coherent $\shD_X$-modules. When $R$ (resp. $\Specan R$) is obvious from the context, we use $\D$ to denote $\D_R$ (resp. $\D_{\Specan R}$) for short. 

By construction, we immediately obtain the following lemma, with its proof skipped.
\begin{lemma}\label{lm:basechangeanddual}
Let $h: \Spec S\to \Spec R$ be a finite type morphism of affine schemes. For $\sM\in\Mod(\shD_{X,R})$, we have:
\begin{enumerate}
    \item if $\sM$ is coherent over $\shD_{X,R}$, then 
    \[\D_S(\sM\otimes^\bL_R S)\simeq \D_R(\sM)\otimes^\bL_R S,\]
    \item if $\sM$ is relative coherent over $\Specan R$, then 
    \[\D_{\Specan S}(\bL \wt h^*\wt\sM)\simeq \bL\wt h^*\D_{\Specan R}(\wt\sM).\]
\end{enumerate}
\end{lemma}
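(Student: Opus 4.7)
The plan is to prove both parts by the same template: pick a suitable locally free resolution of $\sM$ (resp.\ $\wt\sM$), use the identifications $\shD_{X,R} \otimes_R S \simeq \shD_{X,S}$ and $\bL\wt h^* \wt\shD_{X,R} \simeq \wt\shD_{X,S}$, and check termwise commutativity of $\sHom$ into the structure ring with base change. The twist by $\omega_X^{-1}[n]$ on each side is insensitive to base change along $h$ since $\omega_X^{-1}$ lives on $X$ alone, so it can be stripped off for the duration of the argument.

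For part (1), locally on $X$ I would take a bounded resolution $P^\bu \to \sM$ by finite rank locally free $\shD_{X,R}$-modules; such resolutions exist locally because $\shD_{X,R} = \shD_X \otimes_\C R$ has bounded local homological dimension when $R$ is of finite type over $\C$ (for instance via a Koszul--Spencer type resolution built from local coordinates on $X$ and generators of $R$). Since $\shD_{X,R}$ is flat over $R$, both $P^\bu$ and its Hom-dual $\sHom_{\shD_{X,R}}(P^\bu, \shD_{X,R})$ are termwise $R$-flat, so $P^\bu \otimes_R S$ represents $\sM \otimes^\bL_R S$ and $\D_R(\sM) \otimes^\bL_R S$ is represented by $\sHom_{\shD_{X,R}}(P^\bu, \shD_{X,R}) \otimes_R S$. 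The key termwise identity
\[\sHom_{\shD_{X,R}}(\shD_{X,R}^a, \shD_{X,R}) \otimes_R S \simeq \shD_{X,R}^a \otimes_R S \simeq \shD_{X,S}^a \simeq \sHom_{\shD_{X,S}}(\shD_{X,S}^a, \shD_{X,S})\]
then glues to give the desired isomorphism after reinstating the $\omega_X^{-1}[n]$ twist.

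For part (2), the argument is the analytic analogue. I would build a finite length resolution $\wt P^\bu \to \wt\sM$ by finite rank locally free $\wt\shD_{X,R}$-modules, either by analytic sheafification of a $\shD_{X,R}$-coherent resolution of a local lift of $\wt\sM$, or directly via the relative coherence assumption using the framework of \cite{FSO, FS, FF18}. Once such a resolution is in hand, the flatness of $\wt\shD_{X,R}$ over $\wt{pr}_X^{-1}\sO_{\Specan R}$ (this is the flatness built into the relative structure; see \S \ref{subsec:ashfdmo}) lets us pull back termwise by $\bL \wt h^*$, and the computation
\[\bL\wt h^*\sHom_{\wt\shD_{X,R}}(\wt\shD_{X,R}^a, \wt\shD_{X,R}) \simeq \sHom_{\wt\shD_{X,S}}(\wt\shD_{X,S}^a, \wt\shD_{X,S})\]
reduces to the identity $\bL\wt h^* \wt\shD_{X,R} \simeq \wt\shD_{X,S}$ at the level of the structure ring, which closes the argument.

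The main technical obstacle I expect is constructing and globalizing the finite length locally free resolutions in the relative setting, particularly in the analytic case where one must interact coherently with the analytic sheafification functor of \S \ref{subsec:ansh}; this is however a foundational matter handled by the theory of relative analytic $\shD$-modules developed in \cite{FSO, FS, FF18} and recalled in \S \ref{subsec:ashfdmo}. Once those foundational facts are granted, the remainder of the proof is formal homological algebra, which is presumably why the author elected to skip it.
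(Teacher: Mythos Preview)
The paper gives no proof at all: it states the lemma and writes ``By construction, we immediately obtain the following lemma, with its proof skipped.'' Your proposal correctly supplies the standard homological-algebra argument that the author omitted as routine, and your closing remark that this is ``presumably why the author elected to skip it'' is exactly right.

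One minor remark: you do not actually need \emph{bounded} locally free resolutions (which would require control on the global dimension of $\shD_{X,R}$, potentially delicate if $R$ is not regular). It is cleaner to use the change-of-rings adjunction
\[
\Rhom_{\shD_{X,S}}(\sM\otimes^\bL_{\shD_{X,R}}\shD_{X,S},\,\shD_{X,S})\simeq \Rhom_{\shD_{X,R}}(\sM,\,\shD_{X,S})
\]
together with $\shD_{X,S}\simeq \shD_{X,R}\otimes_R S$, and then observe that for $\sM$ coherent the natural map
\[
\Rhom_{\shD_{X,R}}(\sM,\shD_{X,R})\otimes^\bL_R S \longrightarrow \Rhom_{\shD_{X,R}}(\sM,\shD_{X,R}\otimes^\bL_R S)
\]
is an isomorphism, checked locally via any (not necessarily bounded) resolution by finite free $\shD_{X,R}$-modules. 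This sidesteps the boundedness issue entirely, but is only a packaging difference from what you wrote.
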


The following definition is the analytification of \cite[Definition 3.3.1 and 4.3.4]{BVWZ}.
\begin{definition}
For  $\wt\sM\in \Mod(\wt\shD_{X,R})$ relative coherent over $\Specan R$, we say $\wt\sM$ is $j$-Cohen-Macaulay for some $j\in\Z_{\ge 0}$ if  $\wt\sM\not=0$ and
\[\Ext^k_{\wt\shD_{X,R}}(\wt\sM,\wt\shD_{X,R})=0\textup{ for every $k\not= j$}.\]
If $\wt\sM$ is $n$-Cohen-Macaulay, then for simplicity we also use $\D(\wt\sM)$ to denote 
\[\Ext^n_{\wt\shD_{X,R}}(\wt\sM,\wt\shD_{X,R})\otimes_\sO \wt{\omega^{-1}_X\otimes_\C R}.\]
\end{definition}

\subsection{The relative codimension filtration of Gabber-Kashiwara}\label{subsec:relcodimfil}
In this subsection, we let $R$ be a regular commutative finitely generated $\C$-algebra integral domain.
The following lemma is well-known (see for instance \cite[Lemma 2.9]{FF18} and \cite[\S 3.6]{BVWZ}).
\begin{lemma}\label{lm:angrnchreq}
Let $\wt\sM$ be relative coherent over $\Specan R$. If $\wt\sM_{(x,p)}\not=0$ for some point $(x,p)\in X\times\Specan R$, then 
\[j(\wt\sM_{(x,p)})+\dim_\C(\Ch^{\rel}(\wt\sM)\cap \pi_R^{-1}(\wt W))=2n+\dim R,\]
where $\pi_R\colon T^*X\times\Specan R\to X\times \Specan R$ is the natural projection, $\wt W$ is a small open neighborhood of $(x,p)$ and $j(\bullet)$ denotes the grade number for $(\shD_{X,R})_{(x,p)}$-modules $($see for instance \cite[Definition 4.3.1]{BVWZ}$)$.
\end{lemma}

Following the recipe in \cite[Gabber-Kashiwara theorem]{Gil} and \cite[\S 2.4]{Kasbook}, we now construct the Gabber-Kashiwara codimension filtration under the relative setting. For $\sM$ a coherent $\shD_{X,R}$-module and $k\in\Z_{\ge0}$, we define a submodule
\[T_k(\sM)=\{m\in \sM\mid \textup{codim}_\C\Ch^\rel(\wt\shD_{X,R}\cdot m)\ge k\}\subseteq \sM,\]
where $\wt\shD_{X,R}\cdot m\subseteq\wt\sM$ is the submodule generated by $m$. 
Then we have a finite decreasing filtration, called the \emph{relative codimension filtration},
\[\cdots \subseteq T_k(\sM)\subseteq T_{k-1}(\sM)\cdots \subseteq T_1(\sM)\subseteq T_0(\sM)=\sM.\]
In fact, by Theorem \ref{thm:relgabberkashiwara} and the relative Bernstein inequality \cite[Theorem 2.2]{Wuch}, the relative codimension filtration always stops at $k=n+\dim R$.

If $\sM$ is only relative coherent over $\Specan R$ (or more generally $\wt\sM$ is a coherent $\wt\shD_{X,R}$-module), we set \[T_k(\wt\sM)=\{m\in \wt\sM\mid \textup{codim}_\C\Ch^\rel(\wt\shD_{X,R}\cdot m)\ge k\}\subseteq \wt\sM.\]
One can easily check that the relative codimension filtration is compatible with analytic sheafification, that is, if $\sM$ is coherent over $\shD_{X,R}$, then 
\[\wt{T_k(\sM)}=T_k(\wt\sM).\]

\begin{definition}\label{def:purityofcodim}
For a non-zero coherent $\shD_{X,R}$-moudle $\sM$, we say $\sM$ pure of codimension $l$ if 
\begin{enumerate}
    \item $T_l(\sM)=\sM$ and 
    \item $T_k(\sM)=0$ if $k<l$.
\end{enumerate}
For a coherent $\wt\shD_{X,R}$-module $\wt\sM$, we define its purity similarly. 
\end{definition}
By definition, a coherent submodule of a pure module is always pure of the same codimension.  

We have the following relative Gabber-Kashiwara theorem:
\begin{theorem}\label{thm:relgabberkashiwara}
Let $\wt\sM$ $($or $\sM)$ be a coherent $\wt\shD_{X,R}$-module $($resp. $\shD_{X,R}$-module$)$. Then we have:
\begin{enumerate}
    \item $T_k(\wt\sM)$ $($or $T_k(\sM))$ is coherent over $\wt\shD_{X,R}$ $($resp. $\shD_{X,R})$.
    \item $\Ch^{\rel}\frac{T_{k}(\wt\sM)}{T_{k+1}(\wt\sM)}$ is purely $k$-codimensional $($if not empty$)$, i.e. every irreducible component is of codimension $k$.
    \item if $\frac{T_{k}(\wt\sM)}{T_{k+1}(\wt\sM)}\not=0$ $($or $\frac{T_{k}(\sM)}{T_{k+1}(\sM)}\not=0)$, then it is pure of codimension $k$. 
\end{enumerate}
\end{theorem}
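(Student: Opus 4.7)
The plan is to reduce the statement to a stalkwise application of the classical Gabber--Kashiwara theorem for Auslander regular filtered rings, using Lemma \ref{lm:angrnchreq} as the dictionary between the codimension of the relative characteristic variety and the grade number $j$. Concretely, at each point $(x,p)\in X\times\Specan R$, Lemma \ref{lm:angrnchreq} gives
\[
\textup{codim}_\C\bigl(\Ch^\rel(\wt\shD_{X,R}\cdot m)\cap \pi_R^{-1}(\wt W)\bigr)=j\bigl((\wt\shD_{X,R}\cdot m)_{(x,p)}\bigr)
\]
for any local section $m$ and any sufficiently small neighborhood $\wt W$ of $(x,p)$. Hence the relative codimension filtration $T_k$ coincides, after passing to stalks, with the Gabber--Kashiwara grade filtration on $(\wt\sM)_{(x,p)}$ viewed as a module over $(\wt\shD_{X,R})_{(x,p)}$. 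This reduces the entire theorem to the classical one provided that the local rings $(\wt\shD_{X,R})_{(x,p)}$ are Auslander regular; I would verify this by exhibiting them as flat, filtered extensions of $(\shD_X)_x$ whose associated graded rings (polynomial in $T^*X$ and in $R$) remain Auslander regular, a standard argument (see e.g.\ \cite[Chapter 2]{Kasbook}).

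For part (1), I would apply the classical Gabber--Kashiwara theorem at every stalk to obtain that $T_k(\wt\sM)_{(x,p)}$ is finitely generated over $(\wt\shD_{X,R})_{(x,p)}$. Since the filtration is characterized intrinsically by the codimension of characteristic varieties of cyclic submodules and therefore commutes with restriction to open subsets and with localization at $(x,p)$, coherence of the sheaf $T_k(\wt\sM)$ follows from its coherence at each stalk together with the fact that the grade filtration is compatible with small enough open neighborhoods. The case of $\sM\in\Mod(\shD_{X,R})$ is then recovered from the identity $\wt{T_k(\sM)}=T_k(\wt\sM)$ (noted in the paragraph preceding the theorem) together with the observation that coherence over $\shD_{X,R}$ is local on $X$ and can be descended from the analytic sheafification along the pre-sheafification functor.

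For parts (2) and (3), the classical Gabber purity theorem in the Auslander regular setting says that each quotient $(T_k/T_{k+1})_{(x,p)}$ is either zero or pure of grade $k$, i.e.\ every nonzero coherent submodule has grade exactly $k$. Translating via Lemma \ref{lm:angrnchreq}, purity of grade $k$ means that every nonzero cyclic submodule $\wt\shD_{X,R}\cdot m$ of $T_k(\wt\sM)/T_{k+1}(\wt\sM)$ has characteristic variety of pure codimension $k$ near $(x,p)$; since an irreducible component of $\Ch^\rel(T_k/T_{k+1})$ of strictly larger codimension would yield a cyclic submodule of strictly larger grade (hence lying in $T_{k+1}$ and becoming zero in the quotient), every irreducible component has codimension exactly $k$. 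Purity in the sense of Definition \ref{def:purityofcodim} is precisely this stalkwise grade-$k$ purity, so (3) follows as well. The $\sM$ case is handled via the compatibility $\wt{T_k(\sM)}=T_k(\wt\sM)$.

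The main obstacle is ensuring that the Auslander regularity and the classical Gabber--Kashiwara machinery pass cleanly to the analytic relative ring $\wt\shD_{X,R}$ with the additional $R$-direction: one must check that the grade number computed in the relative ring is compatible with that in the absolute ring after adjusting by $\dim R$, so that Lemma \ref{lm:angrnchreq} really gives the right dictionary uniformly in $(x,p)$. Once this is set up, gluing stalkwise coherence and purity into a sheaf-level statement is routine, because the defining condition on codimension of $\Ch^\rel$ is open in a controlled way on $X\times\Specan R$.
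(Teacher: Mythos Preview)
Your overall strategy matches the paper's: both use Lemma \ref{lm:angrnchreq} as the bridge between codimension of $\Ch^\rel$ and the grade number, and then invoke the classical Gabber--Kashiwara arguments (as in \cite[\S2.4]{Kasbook}) for Auslander regular filtered rings. For the $\wt\shD_{X,R}$-case your stalkwise reduction is essentially what the paper means by ``run the same arguments but replacing $X$ by $X\times\Specan R$ and $\shD_X$ by $\wt\shD_{X,R}$'', so parts (2) and (3) are fine.

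There is, however, a genuine gap in your treatment of part (1) for the $\shD_{X,R}$-case. You propose to deduce coherence of $T_k(\sM)$ over $\shD_{X,R}$ from coherence of $T_k(\wt\sM)$ over $\wt\shD_{X,R}$ via the identity $\wt{T_k(\sM)}=T_k(\wt\sM)$ and some descent along sheafification. But the paper explicitly warns (immediately after Definition \ref{def:relcohhol}) that coherence over $\shD_{X,R}$ implies relative coherence, \emph{but not vice versa}. The analytic sheafification lives on $X\times\Specan R$ with a much finer topology in the $R$-direction; a submodule that is locally finitely generated there need not be finitely generated locally on $X$ as a $\shD_{X,R}$-module, since one may need different generators over different relatively compact pieces of $\Specan R$. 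So your descent step does not go through. The paper avoids this by running the Kashiwara argument directly on $X$ with the sheaf of rings $\shD_{X,R}$ (which is filtered with noetherian associated graded, so the classical proof applies verbatim), rather than passing through the sheafification. You should do the same: treat the two cases in parallel, each with its own ambient ring, instead of trying to descend one from the other.
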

\begin{proof}
We follow the strategy in \cite[2.4]{Kasbook}.
We use Lemma \ref{lm:angrnchreq} to replace the role of \cite[Theorem 2.19]{Kasbook}. Then the proof of Theorem 2.18 and Theorem 2.24 in \emph{loc. cit.} give us all the required statements. If $\sM$ is coherent over $\shD_{X,R}$, then we run the arguments in \emph{loc. cit.} on $X$ replacing $\shD_X$ by $\shD_{X,R}$. If $\wt\sM$ is coherent over $\wt\shD_{X,R}$, we run the same arguments but replacing $X$ by $X\times\Specan R$ and $\shD_X$ by $\wt\shD_{X,R}$.
\end{proof}

By Lemma \ref{lm:angrnchreq} and Theorem \ref{thm:relgabberkashiwara}, we immediately obtain:
\begin{coro}\label{cor:puritylocgcomp}
If $\wt\sM$ is pure of codimension $l$, then for $(x,p)\in X\times \Specan R$ if $\wt\sM_{(x,p)}\not=0$ then $\wt\sM_{(x,p)}$ is $l$-pure over $(\wt\shD_{X,R})_{(x,p)}$ in the sense of \cite[Definition 4.3.4]{BVWZ}.
\end{coro}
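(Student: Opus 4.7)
The plan is to deduce local $l$-purity from global $l$-purity via a coherent extension argument. Recall that \cite[Definition~4.3.4]{BVWZ} characterizes $l$-purity of the stalk $\wt\sM_{(x,p)}$ as the requirement that $j(N)=l$ for every non-zero finitely generated $(\wt\shD_{X,R})_{(x,p)}$-submodule $N$, so the task reduces to establishing this equality for an arbitrary such $N$.

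First I would use coherence of $\wt\shD_{X,R}$ to spread $N$ to a coherent subsheaf $\cN\subseteq \wt\sM|_{\wt U}$ on some relatively compact open neighborhood $\wt U$ of $(x,p)$ with $\cN_{(x,p)}=N$; this is routine since $N$ is finitely generated and one can spread out its generators as sections on a neighborhood and take the submodule they generate. The key observation is that $\cN$ inherits purity of codimension $l$ from $\wt\sM$: the relative codimension filtration is compatible with passage to subsheaves, because for any $m\in\cN$ the cyclic submodule $\wt\shD_{X,R}\cdot m$ and its relative characteristic variety are intrinsic to $m$ and do not depend on the ambient module, so $T_k(\cN)=\cN\cap T_k(\wt\sM|_{\wt U})$ for every $k$. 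Together with the purity hypothesis on $\wt\sM$, this forces $T_l(\cN)=\cN$ and $T_{l+1}(\cN)=0$, so $\cN$ is itself pure of codimension $l$ in the sense of Definition~\ref{def:purityofcodim}.

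Next, Theorem~\ref{thm:relgabberkashiwara}(2)--(3) applied to $\cN$ shows that $\Ch^{\rel}(\cN)$ is purely of codimension $l$ in $T^*X\times \Specan R$ near $(x,p)$. Shrinking to a small neighborhood $\wt W$ of $(x,p)$ so that every irreducible component of $\Ch^{\rel}(\cN)$ meeting $\pi_R^{-1}(\wt W)$ has codimension exactly $l$, Lemma~\ref{lm:angrnchreq} applied to $\cN$ gives
\[
j(N)=j(\cN_{(x,p)})=2n+\dim R-\dim_\C\bigl(\Ch^{\rel}(\cN)\cap \pi_R^{-1}(\wt W)\bigr)=l.
\]
Since $N$ was arbitrary, this yields the local $l$-purity of $\wt\sM_{(x,p)}$.

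The main obstacle I anticipate is the first step: producing the coherent extension $\cN$ and verifying that purity genuinely descends, i.e.\ that no spurious codimension-$(l{+}1)$ submodules are introduced at the stalk. This is ultimately formal once one knows the characteristic variety of a cyclic submodule is an intrinsic invariant, but some care is needed to reconcile the local purity condition of \cite[Definition~4.3.4]{BVWZ} with the sheaf-theoretic one in Definition~\ref{def:purityofcodim}. The remaining two steps are a direct combination of Theorem~\ref{thm:relgabberkashiwara} and Lemma~\ref{lm:angrnchreq}.
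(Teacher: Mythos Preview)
Your proposal is correct and follows essentially the same approach as the paper, which simply records the corollary as an immediate consequence of Lemma~\ref{lm:angrnchreq} and Theorem~\ref{thm:relgabberkashiwara} without writing out any details. Your coherent extension argument and the observation that $T_k(\cN)=\cN\cap T_k(\wt\sM|_{\wt U})$ are exactly the routine steps one needs to make the word ``immediately'' precise, and the concern you flag about spurious higher-codimension submodules is already resolved by that same observation.
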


If $\sM$ is relative coherent over $\Specan R$, we set 
\[S_k(\sM)=\supp_{\Specan R}\frac{T_{k}(\wt\sM)}{T_{k+1}(\wt\sM)}.\]
Then by Corollary \ref{cor:suppanalgsubvar} and Theorem \ref{thm:relgabberkashiwara} (2), we have:
\begin{coro}\label{cor:pureS_km}
We have:
\begin{enumerate}
    \item If $\sM$ is relative holonomic over $\Specan R$, then over each open subset $W\times V$ satisfying $W\subseteq X$ a relatively compact open subset and $V\subseteq \Specan R$ a relatively compact open subset, $S_k(\sM)\subseteq V$ is a closed analytic subvariety and if $S_k(\sM)\not=\emptyset$ then it is  purely $(k-n)$-codimensional.
    \item If $\sM$ is coherent over $\shD_{X,R}$ and relative holonomic over $\Specan R$, then over each relatively compact open subset $W\subseteq X$, 
    $S_k(\sM)\subseteq \Spec R$ is a closed algebraic subvariety and if 
    $S_k(\sM)\not=\emptyset$ then it is  purely $(k-n)$-codimensional.
\end{enumerate}
\end{coro}

The following proposition links Cohen-Macaulay modules to pure modules.
\begin{prop}\label{prop:suppsubcmrh}
If $\wt\sM$ (or $\sM$) is relative holonomic over $\Specan R$ and $(n+j)$-Cohen-Macaulay for some $j$, then every non-zero coherent submodule of $\wt\sM$ is pure of codimension $j$. In particular, locally on every open subset $W\times V$ satisfying $W$ a relatively compact open subset of $X$ and $V$ a relatively compact open subset of $\Specan R$, $$\supp_{\Specan R}\wt\sM|_{W\times V}\subseteq V$$
is purely $j$-codimensional. If moreover $\sM$ is coherent over $\shD_{X,R}$, then locally over every relatively compact open subset $W\subseteq X$, $\supp_{\Spec R}\sM\subseteq \Spec R$ is purely $j$-codimensional.
\end{prop}
\begin{proof}
We only need to prove $T_k(\wt\sM)=0$ for every $k>n+j$. We assume on the contrary,  $T_k(\wt\sM)_{(x,p)}\not=0$ for some $k>n+j$ and some $(x,p)\in X\times \Specan R$. For a $(\wt\shD_{X,R})_{(x,p)}$-module $M$, for simplicity we write
\[E^l(M)=\textup{Ext}^l_{(\wt\shD_{X,R})_{(x,p)}}(M,(\wt\shD_{X,R})_{(x,p)}).\]
By Lemma \ref{lm:angrnchreq}, the grade number
\[j(T_k(\wt\sM)_{(x,p)})\ge k>n+j.\]
Then $E^{n+j}(T_k(\wt\sM)_{(x,p)})=0$ and hence $E^{n+j}(E^{n+j}(T_k(\wt\sM)_{(x,p)}))=0$.
Taking double-dual of the inclusion 
\[T_k(\wt\sM)_{(x,p)}\hookrightarrow \wt\sM_{(x,p)}\]
by Cohen-Macaulayness and naturality we obtain a commutative diagram
\[
\begin{tikzcd}
T_k(\wt\sM)_{(x,p)}\arrow[r,hook]\arrow[d]&  \wt\sM_{(x,p)} \arrow[d,"\simeq"]\\
E^{n+j}(E^{n+j}(T_k(\wt\sM)_{(x,p)}))\arrow[r]&  E^{n+j}(E^{n+j}(\wt\sM_{(x,p)})).
\end{tikzcd}
\]
We then end up with a contradiction since $E^{n+j}(E^{n+j}(T_k(\wt\sM)_{(x,p)}))=0.
$

By purity, $S_{n+j}(\wt\sM)=\supp_{\Specan r}(\wt\sM)$. The other required statements follow from Corollary \ref{cor:pureS_km}.
\end{proof}


\subsection{A relative Nakayama lemma}\label{subsec:anshcon}
In this subsection, our goal is to prove  a relative Nakayama lemma for $\C$-constructible sheaves of $R$-modules (cf. \cite[Definition 8.5.6]{KSbook}). 

We assume $X$ a complex manifold, $\sA=\C_X$ the constant sheaf on $X$, and $R$ a noetherian commutative $\C$-algebra.  We write the derived category of $\C$-constructible (or constructible for short) sheaves of $\C$-vector spaces by
$D^b_{c}(\C_X)$, by
$D^b_{c}(R_X)$ the derived category of $\C$-constructible sheaves of $R$-modules and by $\wt D^b_{c}(R_X)$ the derived category of the analytic sheafification of $\C$-constructible sheaves of $R$-modules. We say a (bounded) complex is 0 in the derived category if it is quasi-isomorphic to the 0 complex in the derived category.

One can check that when $R$ is a commutative $\C$-algebra of finite type, if 
$$\cF^\bullet\in D^b_c(R_X),$$ then $\wt\cF^\bullet$ is a relative constructible complex over $\C^l$ supported on $X\times \Specan R$ in the sense of \cite[Definition 2.19.(2)]{FSO}, where we fix a closed embedding 
\[\Spec R\hookrightarrow \C^l\]
for some $l\in \Z_{>0}$.

The following lemma follows directly from constructibility and Nakayama Lemma. We leave its proof for interested readers. 
\begin{lemma}\label{lm:relconstrnakayam}
Let $\cF^\bullet$ be a $\C$-constructible complex of $R$-modules on $X$. Then 
the following are equivalent:
\begin{enumerate}
    \item $\cF^\bullet=0$ in $D^b_c(R_X)$
    \item $\cF^\bullet\otimes^{\bL}_R R/m=0$ in $D^b_c(\C_X)$ for every maximal ideal $m\in \Spec R$
    \item $\wt\cF^\bullet=0$ in $\wt{D^b_c}(R_X)$.
\end{enumerate}
\end{lemma}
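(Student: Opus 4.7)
The plan is to show $(1) \Rightarrow (2)$ trivially by functoriality of derived tensor, and then handle $(2) \Rightarrow (1)$ and $(1) \Leftrightarrow (3)$ by reducing each to the classical Nakayama lemma applied stalkwise, using $\C$-constructibility to guarantee finitely generated stalks.

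For $(2) \Rightarrow (1)$, I would induct on the amplitude of $\cF^\bullet$. Assume $\cF^\bullet \ne 0$ in $D^b_c(R_X)$ and let $n$ be the largest integer with $\cH^n(\cF^\bullet) \ne 0$. Replacing $\cF^\bullet$ by a quasi-isomorphic complex concentrated in degrees $\le n$, a short double-complex calculation with a projective resolution of $R/m$ yields
\[
\cH^n(\cF^\bullet \otimes^\bL_R R/m) \;\simeq\; \cH^n(\cF^\bullet) \otimes_R R/m,
\]
which vanishes for every maximal ideal $m \subset R$ by hypothesis. By $\C$-constructibility, each stalk $\cH^n(\cF^\bullet)_x$ is a finitely generated $R$-module, so the classical Nakayama lemma forces $\cH^n(\cF^\bullet)_x = 0$ at every $x$, contradicting the choice of $n$. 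Iterating completes the proof.

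For $(1) \Leftrightarrow (3)$, the analytic sheafification is the composition of the pre-sheafification (a stalkwise localization, hence exact) with a base change along the flat morphism of structure sheaves $\sO_{\Spec R} \to \sO_{\Specan R}$, and so is exact; it therefore commutes with taking cohomology sheaves. It thus suffices to treat a single $\C$-constructible sheaf $\cG$ of $R$-modules and show $\cG = 0 \Leftrightarrow \wt\cG = 0$. The direction $\Rightarrow$ is immediate. Conversely, Lemma \ref{lm:stalkpre} combined with the base change identifies
\[
\wt\cG_{(x,p)} \;\simeq\; \cG_x \otimes_R \sO_{\Specan R, p},
\]
and for any closed point $p \in \Specan R$ lying over a maximal ideal $m \subset R$ with residue field $\C$, the surjection $\sO_{\Specan R, p} \twoheadrightarrow \C = R/m$ yields $\cG_x \otimes_R R/m = 0$. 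Nakayama on the finitely generated stalk $\cG_x$ then forces $\cG_x = 0$.

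The subtlest point is ensuring that enough maximal ideals of $R$ are ``visible'' from $\Specan R$: only those with residue field $\C$ correspond to analytic closed points. In the situations where this lemma is applied, $R$ is a finitely generated $\C$-algebra (as already required for $\Specan R$ to be defined in Section \ref{subsec:ansh}), so by the Nullstellensatz every maximal ideal has residue field $\C$ and the argument closes cleanly; assembling the three implications then gives the stated equivalences.
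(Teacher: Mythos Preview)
Your argument is correct and is precisely the approach the paper intends: the paper does not supply a proof, stating only that the lemma ``follows directly from constructibility and Nakayama Lemma'' and leaving details to the reader, and your stalkwise reduction via finite generation of stalks is exactly that. Two minor remarks: in $(2)\Rightarrow(1)$ the phrase ``iterating completes the proof'' is superfluous, since once $\cH^n(\cF^\bullet)=0$ you already have the desired contradiction; and your observation that condition~(3) tacitly requires $R$ to be of finite type over $\C$ is apt and consistent with how the lemma is actually used (e.g.\ $R=\C[G]$ in the proof of Corollary~\ref{thm:suppRH}).
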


The lemma above immediately implies:
\begin{coro}\label{cor:supprelccsh}
If $\cF^\bullet\in D^b_c(R_X)$, then $\supp_{\Spec R}\cF^\bullet$ is algebraically closed locally over $X$, i.e. locally on a relatively compact open subset $W$ of $X$, 
\[\supp_{\Spec R}\cF^\bullet|_W \subseteq \Spec R\]
is a closed algebraic subvariety. 
\end{coro}

\bibliographystyle{amsalpha}
\bibliography{mybib}

\end{document}